\documentclass[11pt,reqno]{amsart}
\usepackage[margin=4cm]{geometry}
\usepackage{amsmath,amsfonts,amsthm,amssymb,amscd,verbatim,graphicx,color,multirow,booktabs,caption,bm,chngcntr,adjustbox,longtable,mathtools,microtype,xfrac,thmtools}
\usepackage[pagebackref]{hyperref} 
\usepackage[dvipsnames,table]{xcolor}
\definecolor{BlueSlate}{HTML}{2F4D73}
\hypersetup{colorlinks=true, linkcolor=Maroon, citecolor=OliveGreen, urlcolor=BlueSlate}
\usepackage{tikz,tikz-cd}
\usepackage[abbrev,alphabetic]{amsrefs}  

\usepackage[capitalise,noabbrev]{cleveref}
\usepackage[shortlabels]{enumitem}
\usepackage{todonotes}

\newtheorem{theorem}{Theorem}[section]
\newtheorem{lemma}[theorem]{Lemma}
\newtheorem{corollary}[theorem]{Corollary}
\newtheorem{proposition}[theorem]{Proposition}
\numberwithin{equation}{section}

\theoremstyle{definition}
\newtheorem{definition}[theorem]{Definition}

\newtheorem{remark}[theorem]{Remark}

\newcommand{\A}{\mathrm A}
\newcommand{\C}{\mathrm C}

\newcommand{\V}{V}

\newcommand{\vC}{\vec{\C}}
\newcommand{\vGa}{\vec{\Gamma}}

\newcommand{\ZZ}{\mathbb{Z}}

\newcommand{\Spl}{\mathrm s}

\newcommand{\Fix}{\mathrm{Fix}\,}

\newcommand{\Cay}{\mathrm{Cay}}

\newcommand{\Aut}{\mathrm{Aut}}

\newcommand{\Alt}{\mathrm{Alt}}
\newcommand{\Sym}{\mathrm{Sym}}
\newcommand{\SL}{\mathrm{SL}}

\newcommand{\PSL}{\mathrm{PSL}}
\newcommand{\PGL}{\mathrm{PGL}}

\def\Z#1{{\bf Z}{{(#1)}}}

\begin{document}

\title[Base size of cubic graphs]{The base size of vertex-transitive cubic graphs}

\author[Marco Barbieri]{Marco Barbieri}
\address{Fakulteta za Matematiko in Fiziko, Univerza v Ljubljani, Jadranska ulica 21, 1000 Ljubljana, Slovenia} 
\email{marco.barbieri@fmf.uni-lj.si}

\author[Luca Sabatini]{Luca Sabatini}
\address{Mathematics Institute, University of Warwick, United Kingdom} 
\email{luca.sabatini@warwick.ac.uk, sabatini.math@gmail.com}

\author[Pablo Spiga]{Pablo Spiga}
\address{Dipartimento di Matematica e Applicazioni, University of Milano-Bicocca, Via Cozzi 55, 20125 Milano, Italy} 
\email{pablo.spiga@unimib.it}

\begin{abstract}
We prove that
if $\Gamma$ is a finite connected vertex-transitive cubic graph,
then either $|\V\Gamma| \le 90$, or $\Gamma$ is a split Praeger--Xu graph,
or there exist two vertices $\alpha$ and $\beta$ such that the identity
is the only automorphism of $\Gamma$ fixing both $\alpha$ and $\beta$.
\end{abstract}

\subjclass[2010]{05C25, 20B25}

\thanks{MB is supported by the Slovenian Research Agency grant J1-50001 and research programme P1-0222.
LS is supported by the Royal Society.
All authors are also members of the GNSAGA INdAM research group and gratefully acknowledge its support.}

\keywords{Vertex-transitive, cubic graphs, base size}
\maketitle

\setcounter{tocdepth}{1}
\tableofcontents

\section{Introduction} \label{sec:intro}

\subsection{Main result}

For a permutation group on a finite domain $\Omega$,
a {\em base} is a subset of $\Omega$ whose pointwise stabiliser is trivial,
and the {\em base size} is the minimum cardinality among the bases.
For instance, the base sizes of the symmetric group and of the alternating group of degree $n$
are $n-1$ and $n-2$ respectively,
while the base size of a general linear group coincides with the dimension of the underlying vector space.
Bounding the base size is a fundamental problem in permutation group theory.
On a theoretical level, an element of a permutation group is uniquely determined by its action on a base,
and on a computational level, this can be leveraged to store and perform algorithms efficiently \cite{SeressBook}.

It is natural and interesting to study the base size
when the permutation group arises as the automorphism group of some object,
and this is the approach we pursue in this paper.
In particular, we will focus on finite graphs.
Just as transitive permutation groups form the natural setting for studying base sizes,
connected vertex-transitive graphs provide the corresponding setting for graphs.
The first genuinely nontrivial case is that of cubic graphs, that is, regular graphs of valency $3$.

Our main result reveals a surprising trichotomy:
except for some small cases that can be considered low-level noise,
and a certain natural infinite class of graphs,
the base size of the automorphism group of a connected vertex-transitive cubic graph is at most $2$. 

\begin{theorem} \label{thrm:main}
Let $\Gamma$ be a finite connected vertex-transitive cubic graph with base size greater than $2$.
Then one of the following holds:
\begin{description}
\item[(i)] $|\V\Gamma| \le 90$ and $\Gamma$ is one of the distance-transitive graphs in \cref{tab:main};
\item[(ii)] $\Gamma$ is a split Praeger--Xu graph $\Spl\C(r,s)$ with $r \geq 3$ and $1 \le s < r/2$.
\end{description} 
\end{theorem}

\begin{table}[ht] 
\begin{center}
\begin{tabular}{||c||c|c|c|c|c||}
\hline\hline
&$|\V\Gamma|$ &$\Aut(\Gamma)$ &$\Aut(\Gamma)_\alpha$ & Base size & Comments \\
 \hline\hline
1&$4$  & $\Sym(4)$ &$\Sym(3)$ &  3 & complete graph ${\bf K}_4$  \\
2&$6$  & $\Sym(3)\wr_2 \Sym(2)$ &$\Sym(3)\times\Sym(2)$ &  4 & complete bipartite ${\bf K}_{3,3}$  \\  
3&$8$  & $\Sym(4)\times\Sym(2)$ &$\Sym(3)$ &  3 & cube graph\\  
4&$10$  & $\Sym(5)$ &$\Sym(3)\times\Sym(2)$ &  3 & Petersen graph\\
5&  $14$  & $\PGL_2(7)$ &$\Sym(4)$ &  3 & Heawood graph\\
 6&   $18$ & $3_+^3:D_4$ &$\Sym(3)\times\Sym(2)$ &  3 & Pappus graph\\
 7&  $20$  & $\Sym(5)\times\Sym(2)$ &$\Sym(3)\times\Sym(2)$ &  3 & Desargues graph\\    
 8&  $30$ & $\mathrm{P}\Gamma\mathrm{L}_2(9)$ &$\Sym(4)\times\Sym(2)$ &  3 & Tutte--Coxeter graph\\ 
 9&    $90$ &  $3.\mathrm{P}\Gamma\mathrm{L}_2(9)$ &$\Sym(4)\times\Sym(2)$ &  3 & Foster graph\\ 
\hline\hline
\end{tabular}
\medskip
\caption{Exceptional graphs in \cref{thrm:main}} 
\label{tab:main}
\end{center}
\end{table}

We refer the reader to \cref{sec:PX} for the definition of the Praeger--Xu graphs and their splits.
The exceptions in \cref{thrm:main} can be understood as the most symmetric among the connected cubic graphs.
Hence, it is not unexpected that several notable graphs appear in \cref{tab:main}.

\subsection{Consequences and open directions}

A striking feature of vertex-transitive cubic graphs is the gap between
the exponential growth of the automorphism groups of split Praeger--Xu graphs and all other cases.
The best possible bound for the order of these groups is of type $O(|V\Gamma|^2/\log|V\Gamma|)$
and has been obtained in~\cite{PotocnikSpigaVerret2015}.
Although it gives a weaker inequality, \cref{thrm:main} provides a much more satisfactory explanation of this phenomenon.

\begin{corollary}
Let $\Gamma$ be a finite connected vertex-transitive cubic graph.
Then either $\Gamma$ is a split Praeger--Xu graph, or $|\Aut(\Gamma)| \le 2|\V\Gamma|^2$.
\end{corollary}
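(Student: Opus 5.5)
The plan is to deduce the corollary directly from \cref{thrm:main}, together with the elementary observation that a base of size $b$ for a transitive group $G$ on $\Omega$ gives $|G|\le |\Omega|(|\Omega|-1)\cdots(|\Omega|-b+1)$. Let $\Gamma$ be a finite connected vertex-transitive cubic graph that is not a split Praeger--Xu graph, and set $G=\Aut(\Gamma)$ and $n=|\V\Gamma|$. By \cref{thrm:main}, either $G$ has base size at most $2$, or $\Gamma$ is one of the nine graphs in \cref{tab:main}.

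In the first case choose vertices $\alpha,\beta$ with $G_\alpha\cap G_\beta=1$. The map $G\to \V\Gamma\times\V\Gamma$ given by $g\mapsto(\alpha^g,\beta^g)$ is injective. Indeed, if $\alpha^g=\alpha^h$ and $\beta^g=\beta^h$, then $gh^{-1}$ fixes both $\alpha$ and $\beta$, so $gh^{-1}=1$. Hence $|G|\le n^2\le 2n^2$. This case also covers base size $1$, or even $0$, since those trivially give a base of size at most $2$.

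It remains to check the finitely many exceptional graphs against \cref{tab:main}, using $|G|=n\cdot|G_\alpha|$. We need $|G_\alpha|\le 2n$ in each case:
\begin{itemize}
\item ${\bf K}_4$: $|G_\alpha|=6\le 8$;
\item ${\bf K}_{3,3}$: $|G_\alpha|=12\le 12$;
\item cube: $|G_\alpha|=6\le 16$;
\item Petersen: $|G_\alpha|=12\le 20$;
\item Heawood: $|G_\alpha|=24\le 28$;
\item Pappus: $|G_\alpha|=12\le 36$;
\item Desargues: $|G_\alpha|=12\le 40$;
\item Tutte--Coxeter: $|G_\alpha|=48\le 60$;
\item Foster: $|G_\alpha|=48\le 180$.
\end{itemize}
So the bound $2n^2$ holds throughout, with equality exactly for ${\bf K}_{3,3}$. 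This explains the constant $2$ in the statement.

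There is no real obstacle here. The only delicate point is the ${\bf K}_{3,3}$ entry, where the bound is sharp, so I would double-check its data: $|\Sym(3)\wr\Sym(2)|=72=2\cdot 36$. The other point to confirm is that every graph with base size greater than $2$ is indeed accounted for by \cref{thrm:main}(i) or (ii). This is exactly the content of the theorem, which we may assume.
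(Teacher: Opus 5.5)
Your proposal is correct and follows essentially the same route as the paper. The paper likewise uses that base size at most $2$ gives $|\Aut(\Gamma)|\le|\V\Gamma|^2$ (citing Dixon--Mortimer rather than spelling out the injection $g\mapsto(\alpha^g,\beta^g)$), and then inspects the graphs in \cref{tab:main}; your explicit check, including the sharp case ${\bf K}_{3,3}$, fills in that inspection accurately.
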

\begin{proof}
If $\Aut(\Gamma)$ has base size at most $2$,
then $|\Aut(\Gamma)| \leq |\V\Gamma|^2$ \cite[Exercise~3.3.2]{DixonMortimer}.
The proof follows by a direct inspection of the graphs in \cref{tab:main}.
\end{proof}

Recently there has been interest in stabilisers which are not necessarily trivial \cite{Babai2022,Gluck2025,Sabatini2026}.
The following statement in this spirit turns \cref{thrm:main} into a uniform result.

\begin{corollary}
Let $\Gamma$ be a finite connected vertex-transitive cubic graph.
Then there exist two vertices of $\Gamma$ whose pointwise stabiliser is abelian.
\end{corollary}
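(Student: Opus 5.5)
The plan is to reduce everything to \cref{thrm:main}, which splits connected vertex-transitive cubic graphs $\Gamma$ into three cases. We treat each case separately and produce two vertices $\alpha,\beta$ for which $\Aut(\Gamma)_{\alpha\beta}$ is abelian.

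\emph{Base size at most $2$.} Here there are vertices $\alpha,\beta$ with $\Aut(\Gamma)_{\alpha\beta}=1$, and the trivial group is abelian. (If the base size is $1$, take any $\beta$.)

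\emph{The graphs of \cref{tab:main}.} This is a finite check. For each of the nine distance-transitive graphs we choose $\beta$ at a suitable distance from $\alpha$ and compute $\Aut(\Gamma)_{\alpha\beta}$, either by hand or by computer.
\begin{itemize}
\item For ${\bf K}_4$, fixing two vertices leaves $\Sym(2)$.
\item For ${\bf K}_{3,3}$, we must not take both vertices in the same part, since that leaves $\Sym(3)$ acting on the other part. Taking $\alpha,\beta$ adjacent instead gives $\Sym(2)\times\Sym(2)$.
\item For the remaining graphs, the stabiliser $\Aut(\Gamma)_\alpha$ is $\Sym(3)$, $\Sym(3)\times\Sym(2)$ or $\Sym(4)\times\Sym(2)$. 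The stabiliser of a second vertex at maximal distance (or at distance $1$) is a subgroup of order at most $4$ or $8$.
\end{itemize}
One must check that the subgroup obtained is not dihedral of order $8$. If it is, one passes to a vertex at another distance; the distance-transitivity makes this a short list of cases. This check is routine, although the Foster graph and the Tutte--Coxeter graph are most easily handled by computer.

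\emph{Split Praeger--Xu graphs $\Spl\C(r,s)$.} This is the main case, and here I expect to need the structural description of $\Aut(\Spl\C(r,s))$ from \cref{sec:PX}. The intended argument is as follows.
\begin{enumerate}
\item The vertices of $\Spl\C(r,s)$ correspond to (certain) arcs or edges of the Praeger--Xu graph $\C(r,s)$. Its automorphism group is, apart from finitely many small exceptions, the group $\Aut(\C(r,s))\cong C_2^r\rtimes D_r$ (or an index-bounded overgroup of it).
\item The kernel $K\cong C_2^r$ of the action on the underlying cycle of length $r$ is elementary abelian.
\item A vertex stabiliser maps into the stabiliser in $D_r$ of a vertex or edge of the $r$-cycle, which has order at most $2$.
\item Choose $\alpha,\beta$ lying over two different positions of the $r$-cycle that are not swapped by a common reflection. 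Then $\Aut(\Gamma)_{\alpha\beta}$ acts trivially on the cycle, so it lies in $K$ and is therefore abelian. Such positions exist because $r\ge3$.
\end{enumerate}

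The main obstacle is making this last step precise. It requires pinning down $\Aut(\Spl\C(r,s))$ exactly: that it really is contained in the automorphism group of the underlying $\C(r,s)$ (so that no extra automorphisms appear), and that the quotient to $D_r$ behaves as described. For this I would rely on the results of \cref{sec:PX}. Small $r$ would be checked directly.
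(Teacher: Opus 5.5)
Your strategy is the paper's. You reduce via \cref{thrm:main}, use a base of size $2$ when one exists, and inspect \cref{tab:main} directly. For $\Spl\C(r,s)$ you use $\Aut(\Spl\C(r,s))=H=K\rtimes\langle\rho,\sigma\rangle$ (\cref{sec:subS}) to force a two-point stabiliser into the elementary abelian group $K\cong C_2^r$; this is exactly the ``easy to see'' step behind the paper's remark that vertex-stabilisers have an elementary abelian subgroup of index at most $2$.

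However, two of your steps are misstated and would not work as written. In the split Praeger--Xu case, the condition that the two positions be ``not swapped by a common reflection'' can never be met. Any two vertices $x,y$ of an $r$-cycle are swapped by the reflection $v\mapsto x+y-v$, and the same holds for edges. What your argument actually needs is that no nontrivial element of $D_{2r}$ fixes both positions, which holds for adjacent positions when $r\ge 3$. In fact no choice is needed at all: by the proof of \cref{bloodyhel}, the stabiliser in $H$ of a vertex of $\Spl\C(r,s)$ is $H_\alpha\cap K=K_\alpha$. This is already elementary abelian, so any two vertices work.

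For \cref{tab:main}, testing only for $D_8$ is not enough, since $\Sym(3)$ is nonabelian of order $6$. Your distance heuristics also fail in several cases:
\begin{itemize}
\item In the cube and the Desargues graph, the vertex at maximal distance is the unique antipode. So the two-point stabiliser is all of $\Aut(\Gamma)_\alpha$, namely $\Sym(3)$ and $\Sym(3)\times\Sym(2)$ respectively.
\item In the Heawood graph, where $\Aut(\Gamma)_\alpha\cong\Sym(4)$ (missing from your list), distance $1$ gives $D_8$. Distance $3$ gives $\mathrm{GL}_1(2)\times\mathrm{GL}_2(2)\cong\Sym(3)$. Only distance $2$ works.
\end{itemize}
A uniform choice fixes this. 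Suppose $\Gamma$ is $s$-arc-regular of girth $g$ and $d\le\min\{s,\lfloor(g-1)/2\rfloor\}$. Then every vertex at distance $d$ from $\alpha$ ends a unique $d$-arc from $\alpha$, so the two-point stabiliser has order $2^{s-d}$. Take $d=1$ when $s\le 3$, $d=2$ for the Heawood graph, and $d=3$ for the Tutte--Coxeter and Foster graphs. In every case the order is at most $4$, so the group is abelian.
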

\begin{proof}
The vertex-stabiliser in a split Praeger--Xu graph has an elementary abelian subgroup
of index at most $2$, and it is easy to see that the claim holds in this case.
The proof follows by a direct inspection of the graphs in \cref{tab:main}.
\end{proof}

The {\em distinguishing number} of a permutation group is the minimum number of colours
needed to colour the domain so that only the identity preserves the colouring.
 The distinguishing number of a connected vertex-transitive cubic graph is at most $2$ unless $\Gamma$
    is either ${\bf K}_4$, ${\bf K}_{3,3}$, the cube graph or the Petersen graph \cite[Theorem~1.1]{Imrich2019}.
The {\em distinguishing cost} is the minimum size of a colour class in a symmetry-breaking colouring. 
We will prove that, among the $2$-distinguishable cubic vertex-transitive graphs,
the only ones with large distinguishing cost are the split Praeger--Xu graphs.
This answers \cite[Question~9.12]{Imrich2022}.

\begin{corollary} \label{cor:distinguishinCost}
    Let $\Gamma$ be a finite connected vertex-transitive cubic graph with distinguishing number $2$.
    Then the distinguishing cost is at most $3$ unless $\Gamma$ is a split Praeger--Xu graph.
\end{corollary}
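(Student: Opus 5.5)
The plan is to deduce the corollary from \cref{thrm:main}, after checking by hand the finitely many exceptional graphs in \cref{tab:main}. The key observation is that a base of size at most $2$ gives a symmetry-breaking colouring of small cost. Suppose $\Gamma$ is not a split Praeger--Xu graph and $\Aut(\Gamma)$ has base size at most $2$, so there are vertices $\alpha,\beta$ with $\Aut(\Gamma)_{\alpha,\beta}=1$. Colour red a set $R$ of at most $3$ vertices, chosen so that every automorphism preserving $R$ setwise must fix $\alpha$ and $\beta$. Then such an automorphism is trivial, and the distinguishing cost is at most $|R|\le 3$.

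To build $R$, I would take $\alpha,\beta$ with the base property and add a third vertex $\gamma$ that rigidifies the set. Concretely, I would choose $R=\{\alpha,\beta,\gamma\}$ so that the three vertices are pairwise distinguishable by graph invariants of the set. Choosing the pairwise distances $d(\alpha,\beta)$, $d(\alpha,\gamma)$, $d(\beta,\gamma)$ to be distinct would force any setwise stabiliser of $R$ to fix $R$ pointwise.

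The flexibility needed comes from the fact that, for a given base pair, the orbit of $\beta$ under $\Aut(\Gamma)_\alpha$ is regular. So many choices of $\beta$ at a fixed distance also give bases. The third point $\gamma$ only has to be placed at a distance from $\alpha$ and from $\beta$ different from $d(\alpha,\beta)$ and different from each other. When the diameter is large, I would take $\gamma$ adjacent to $\alpha$ but farther from $\beta$. That choice gives $d(\alpha,\gamma)=1$ and $d(\beta,\gamma)=d(\alpha,\beta)+1$, which are distinct from $d(\alpha,\beta)$ whenever $d(\alpha,\beta)\ge 2$.

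Alternatively, I might use the argument from the paper's proof of the main theorem, if it produces base pairs at a specific distance. The case $d(\alpha,\beta)=1$ needs separate handling: either find another base pair at distance at least $2$, or use a colour class of size $2$ directly. The latter works because an automorphism swapping $\alpha$ and $\beta$ would still need to be ruled out.

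The main obstacle is making this rigidification argument uniform. In particular, it must guarantee that a suitable $\gamma$ exists in small-diameter graphs, and that distinct distances genuinely separate the points. For small graphs, say $|\V\Gamma|\le 90$ or small diameter, I would instead appeal to a computer check over the census of cubic vertex-transitive graphs.

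Finally, for the graphs in \cref{tab:main}, I would discard ${\bf K}_4$, ${\bf K}_{3,3}$, the cube and the Petersen graph, since they have distinguishing number greater than $2$ by \cite[Theorem~1.1]{Imrich2019}. For the Heawood, Pappus, Desargues, Tutte--Coxeter and Foster graphs, which have base size $3$, I would check directly that some $3$-set has trivial setwise stabiliser.
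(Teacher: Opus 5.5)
Your outer reduction is the paper's: inspect \cref{tab:main}, set aside split Praeger--Xu graphs, take a base $\{\alpha,\beta\}$ of $\Aut(\Gamma)$ from \cref{thrm:main}, and add a third vertex so that the three pairwise distances are distinct. The gap is that the third vertex you propose need not exist.

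\textbf{Why your choice of $\gamma$ fails.} You take $\gamma\in\Gamma(\alpha)$ with $d(\beta,\gamma)=d(\alpha,\beta)+1$. Such a neighbour exists only if $\alpha$ is not a local maximum of $d(\beta,\cdot)$. It certainly fails when $d(\alpha,\beta)$ equals the diameter, and \cref{thrm:main} gives you no control over the distance between the two base points.
\begin{enumerate}[(a)]
\item Large diameter does not help, since the obstruction is the position of $\beta$ relative to $\alpha$, not the size of $\Gamma$. So a computer check for small diameter does not cover the failing configurations.
\item Replacing $\beta$ by $\beta^g$ with $g\in\Aut(\Gamma)_\alpha$ does not help either. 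Such a $g$ is an isometry fixing $\alpha$ and permuting $\Gamma(\alpha)$, so $\beta^g$ sees $\Gamma(\alpha)$ at exactly the same distances as $\beta$ does.
\item The case $d(\alpha,\beta)=1$ is left open. Colouring $\{\alpha,\beta\}$ does not exclude an automorphism interchanging them, and adding a neighbour of $\alpha$ gives the non-distinct pattern $1,1,2$.
\end{enumerate}

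\textbf{The missing idea: move towards $\beta$, not away from it.}
\begin{enumerate}[(a)]
\item The paper first takes $\gamma\in\Gamma(\alpha)$ not adjacent to $\beta$. This is possible unless $\Gamma\cong{\bf K}_{3,3}$, by \cref{lemma:K33}. It already works when $d(\alpha,\beta)\ge 2$ and $d(\gamma,\beta)\neq d(\alpha,\beta)$.
\item Otherwise it uses the neighbour $\delta$ of $\alpha$ on a geodesic to $\beta$, which always exists. This gives distances $1$, $d-1$, $d$, pairwise distinct as soon as $d=d(\alpha,\beta)\ge 3$.
\item Only $d(\alpha,\beta)\le 2$ then needs a local analysis of $\Gamma(\alpha)\cap\Gamma(\beta)$. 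There \cref{lemma:prism} identifies the configuration $|\Gamma(\alpha)\cap\Gamma(\beta)|=2$ as the circular ladder on $6$ vertices.
\end{enumerate}

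\textbf{Make the small cases explicit when you write this out; the paper is terse there.}
\begin{enumerate}[(a)]
\item For $d(\alpha,\beta)=1$, first replace $\beta$ by a base partner at distance at least $2$. An adjacent base partner has a regular $\Aut(\Gamma)_\alpha$-orbit inside $\Gamma(\alpha)$, which forces $|\Aut(\Gamma)_\alpha|\le 3$. If $\Aut(\Gamma)_\alpha=1$ the cost is $1$; if the order is $2$ or $3$, a short local check finds such a partner away from ${\bf K}_4$ and ${\bf K}_{3,3}$.
\item For $d(\alpha,\beta)=2$, a neighbour of $\alpha$ at distance $2$ from $\beta$ gives the pattern $1,2,2$. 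Distinctness of distances alone does not settle this pattern, so it needs its own argument.
\end{enumerate}
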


The proof of \cref{cor:distinguishinCost} is slightly more involved, and we delay it to \cref{sec:corProof}.

It is natural to ask whether a result in the spirit of \cref{thrm:main} should hold for graphs of higher valency.
A major obstacle in this direction is the absence of a clear analogue of the exceptional family appearing in the cubic case:
in fact, it remains a longstanding open problem in the theory of permutation groups acting on graphs to identify a natural generalization of the Praeger--Xu graphs and their splits for valency $5$ and beyond.
Nevertheless, we believe that a similar phenomenon should hold at least for $4$-valent vertex- and edge-transitive graphs,
where we expect the exceptional family to be given by the (nonsplit) Praeger--Xu graphs.
In the course of our proof we identify two specific points where our argument fails to extend to this setting
(see Remarks~\ref{rem:prob1} and \ref{rem:prob2}).

It would be highly interesting to have a conceptual reason
for why large cubic graphs have base size either at most $2$ or unbounded.
The structure and depth of the proof of \cref{thrm:main},
which depends on the Classification of the Finite Simple Groups,
reflect this lack of a simple explanation.

\subsection{Structure of the proof}\label{sec:readersGuide}

The proof of \cref{thrm:main} combines tools from graph theory and permutation group theory,
together with structural results on $p$-groups and almost simple groups.
For the convenience of the reader, we provide a brief roadmap of the argument. 

Let $\Gamma$ be a connected vertex-transitive cubic graph,
and let $G \leqslant \Aut(\Gamma)$ act transitively on $\V\Gamma$ with base size greater than $2$.
If $G$ is transitive on the arcs of $\Gamma$, then a simple observation shows that $\Gamma$ has girth at most $10$.
By work of Conder, Lorimer, and Morton \cite{ConderLorimer1989,Morton1991},
only finitely many graphs satisfy this condition, so we can conclude by an explicit computation (\cref{prop:arctran}).

We may therefore assume that $G$ is not arc-transitive, and the stabiliser of a vertex $G_\alpha$ is a $2$-group.
We proceed by induction on $|\V\Gamma|$,
using the normal quotient technique for graphs introduced in \cite{Praeger1993}.
A crucial ingredient is a theorem of Djokovi\'c \cite{Djokovic1980} on locally-$D_4$ group amalgams.
In particular, we observe that $G_\alpha$ has nilpotency class at most $2$ and exponent at most $4$.

At this stage we consider the structure of the socle.
If $G$ contains an abelian minimal normal subgroup $N$,
the argument reduces to a case-by-case analysis according to the valency of the quotient $\Gamma/N$.
In this situation Praeger--Xu graphs, their splits, and their covers play a central role,
and we devote \cref{sec:PX} to their study.
The analysis in \cref{sec:PX,sec:abelminnor} makes extensive use of tools from the theory of $p$-groups,
and the final result is given in \cref{prop:minabel}.

Suppose instead that the soluble radical of $G$ is trivial.
If there is more than one minimal normal subgroup,
then an argument similar to that used in the abelian case leads to a contradiction (\cref{prop:minnonabelnontriv}).
The deepest part of the proof is when $G$ is a monolithic group with nonabelian socle $M = T^\ell$.
In this case we have
    \[ T^\ell \unlhd G \leqslant \Aut(T)^\ell \rtimes \Sym(\ell) , \] 
    and $G_\alpha$ is a $2$-subgroup of $G$ having nilpotency class at most $2$ and exponent at most $4$.
    This includes the case $\ell=1$, i.e. $G$ being almost simple.
    We will proceed by showing that there always exists $m \in M$ such that $G_\alpha \cap G_\alpha^m =1$.
To do so, we first refine results of Zenkov \cite{Zenkov1996} and Burness--Huang \cite{BurHua2026}
on the intersections of Sylow $2$-subgroups in almost simple groups (\cref{lem:almostsimple}).
When $\ell >1$, we consider the natural action of $G_\alpha$ on $\{ 1,\ldots,\ell \}$
and use appropriate asymmetric colourings for permutation $2$-groups,
whose existence is proven in \cref{sec:Col}.
The idea of using distinct double-cosets to colour a direct product of groups was first used in \cite{Zenkov1996},
but the analysis here is much more delicate.
The details of the proof are given in \cref{sec:proof}.

\section{Preliminaries} \label{sec:basic}

\subsection{Graphs}

A graph $\Gamma$ is a pair $(V,E)$ where $V$ is a finite nonempty set of
{\em vertices} and $E$ is a set of unordered pairs of $V$, called {\em edges}.
An {\em $s$-arc} is an $(s+1)$-tuple of vertices with every two consecutive vertices adjacent and every three consecutive vertices pairwise distinct. In particular, a $1$-arc is also called an {\em arc}. 

We will also need a notion of {\em digraph}, which we define as a pair $\vGa = (V,A)$,
where $V$ is a finite nonempty set of vertices and $A$ is a set of {\em ordered} pairs of distinct vertices,
which we call {\em arcs}.
As above, an {\em $s$-arc} is an $(s+1)$-tuple of vertices 
such that every two consecutive vertices form an arc and
every three consecutive vertices are pairwise distinct.
If $(\alpha,\beta)$ is an arc of a digraph, then we say that $\beta$ is an {\em out-neighbour} of $\alpha$ and that
$\alpha$ is an {\em in-neighbour} of $\beta$.
The {\em out-valency} ({\em in-valency}, respectively) of a given vertex
is the number of its out-neighbours (in-neighbours, respectively).
The {\em neighbourhood} of $\alpha$ is $\Gamma(\alpha) = \{ \beta\in \V\Gamma \mid (\alpha,\beta)\in \A\Gamma\}$.

If $\vGa=(V,A)$, then the underlying graph of $\vGa$ is the graph $(V,E)$
with $E=\{ \{\alpha,\beta\} \mid (\alpha,\beta) \in A\}$. 
Note that, if $\vGa$ is an {\em orientation} (that is, $(\alpha,\beta) \in A$ implies $(\beta,\alpha)\not\in A$),
then there is a bijective correspondence between the arcs of $\vGa$
and the edges of the underlying graph.

\subsection{Groups}

Let $G$ be a finite group. For $x,g \in G$, let $x^g = g^{-1}xg$ and $[x,g] = x^{-1} x^g$.
If $X \subseteq G$, we write
$$ {\bf N}_G(X) = \{ g \in G : X^g = X \} ,  \qquad
{\bf C}_G(X) = \{ g \in G : x^g = x \text{ for all } x \in X \} , $$
and ${\bf Z}(G) = {\bf C}_G(G)$.
If $H$ is a subgroup of $G$, we write $H^G$ for the {\em normal closure} of $H$ in $G$.
The {\em derived subgroup} $[G,G]$ is the smallest normal subgroup of $G$
with respect to which the quotient group is abelian,
and the {\em Frattini subgroup} $\Phi(G)$ is the intersection of the maximal subgroups.

A finite group has trivial soluble radical if it has no nontrivial abelian normal subgroup,
and is {\em monolithic} if it has a unique minimal normal subgroup.
 The {\em socle} is the subgroup generated by the minimal normal subgroups.
Two minimal normal subgroups either coincide or commute,
so the socle is the direct product of the minimal normal subgroups.

If $G \leqslant \Sym(\Omega)$ is a permutation group and $\alpha \in \Omega$,
then we use $G_\alpha$ to denote the stabiliser of $\alpha$.
If $G$ stabilises $\Delta \subseteq \Omega$ setwise, then $G$ induces a (possibly nonfaithful) action on $\Delta$,
and we write $G^\Delta \leqslant \Sym(\Delta)$ for the corresponding permutation group.

Given a transitive permutation group $G \leqslant \Sym(\Omega)$,
a {\em block} is a nonempty subset $\Delta$ of $\Omega$ such that,
for every $g \in G$, either $\Delta^g = \Delta$ or $\Delta^g \cap \Delta = \emptyset$.
The $G$-orbit $\Sigma$ of a block forms a partition of $\Omega$, which is called a {\em system of imprimitivity}.
If $G$ admits a nontrivial system of imprimitivity, then $G$ is said to be {\em imprimitive},
otherwise it is said to be {\em primitive}.
An imprimitive permutation group $G$ having a block system $\Sigma$ containing a block $B$
embeds into the wreath product $G_B^B \wr_{\Sigma} G^{\Sigma}$.
In general, our notation for permutation groups follows \cite{DixonMortimer, MR3791829}.

Suppose that $G$ is a finite group acting transitively on a set $\Omega$.
For $x \in G$, let
\[
\Fix_\Omega(x) = \{ \omega \in \Omega \mid \omega^x = \omega \} ,
\]
the set of points fixed by $x$.
The {\em fixed-point ratio} is defined by
\begin{equation} \label{eq:fpr}
\mathrm{fpr}_\Omega(x)
   = \frac{|\Fix_\Omega(x)|}{|\Omega|} 
   = \frac{|x^G \cap G_\alpha|}{|x^G|} ,
\end{equation}
where the second equality is classical (see \cite{LiebeckSaxl1991}, for example).

A permutation group $G$ is said to be {\em semiregular} if $G_\alpha =1$ for all $\alpha \in \Omega$.
An orbit $\Delta$ of $G$ is {\em regular} if $|\Delta| = |G|$, or equivalently if $G_\alpha = 1$ for every $\alpha \in \Delta$.
We now give two easy results that produce regular orbits.
 
\begin{lemma}\label{lem:inflate}
Let $H \leqslant \Sym(\Delta)$ be transitive, let $\Sigma$ be a system of imprimitivity,
$\delta \in \Delta$, and let $\sigma \in \Sigma$ be the block containing $\delta$.
If the block stabiliser $H_\sigma$ has $\kappa$ regular orbits on $\Sigma$,
then $H_\delta$ has at least $\kappa |H_\sigma : H_\delta|^2$ regular orbits on $\Delta$.
\end{lemma}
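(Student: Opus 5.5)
The plan is a direct counting argument: the point stabiliser $H_\delta$ sits inside the block stabiliser $H_\sigma$, so it acts semiregularly on every point lying in a block from a regular $H_\sigma$-orbit on $\Sigma$, and then one counts points.

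First, since $\sigma$ is the block containing $\delta$, any element fixing $\delta$ fixes $\sigma$, so $H_\delta \leqslant H_\sigma$. Since $H$ is transitive on $\Delta$, the group $H_\sigma$ is transitive on $\sigma$. Its stabiliser of $\delta$ is $H_\delta$, so $|\sigma| = |H_\sigma : H_\delta|$, and every block of $\Sigma$ has this size.

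Next, let $\mathcal O \subseteq \Sigma$ be a regular orbit of $H_\sigma$, so $(H_\sigma)_\tau = 1$ for every $\tau \in \mathcal O$ and $|\mathcal O| = |H_\sigma|$. Put $U_{\mathcal O} = \bigcup_{\tau \in \mathcal O} \tau \subseteq \Delta$. This set is $H_\sigma$-invariant, hence $H_\delta$-invariant. Take $\gamma \in \tau \in \mathcal O$ and $h \in (H_\delta)_\gamma$. Then $h$ fixes $\gamma$, so it fixes the block $\tau$ containing $\gamma$. Also $h \in H_\delta \leqslant H_\sigma$, so $h \in (H_\sigma)_\tau = 1$. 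Hence $H_\delta$ acts semiregularly on $U_{\mathcal O}$. Every $H_\delta$-orbit in $U_{\mathcal O}$ is therefore regular, and the number of such orbits is
\[
\frac{|U_{\mathcal O}|}{|H_\delta|} = \frac{|H_\sigma|\,|H_\sigma:H_\delta|}{|H_\delta|} = |H_\sigma:H_\delta|^2 .
\]

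Finally, distinct regular $H_\sigma$-orbits $\mathcal O_1,\dots,\mathcal O_\kappa$ on $\Sigma$ are disjoint sets of blocks, so the sets $U_{\mathcal O_i}$ are pairwise disjoint. Summing over the $\kappa$ orbits gives at least $\kappa|H_\sigma:H_\delta|^2$ regular $H_\delta$-orbits on $\Delta$.

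The only step needing care is the semiregularity in the second paragraph, which uses both $H_\delta \leqslant H_\sigma$ and that a point stabiliser fixes the block containing that point. The rest is routine counting.
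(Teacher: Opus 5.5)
Your proposal is correct and is essentially the paper's argument. Both take the points lying in blocks from regular $H_\sigma$-orbits, note that $H_\delta\cap H_{\delta'}\leqslant H_\sigma\cap H_{\sigma'}=1$ for such points, and divide their number $\kappa|H_\sigma|\,|H_\sigma:H_\delta|$ by $|H_\delta|$; you additionally justify $|\sigma|=|H_\sigma:H_\delta|$ via transitivity of $H_\sigma$ on its block, which the paper uses implicitly.
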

\begin{proof}
Let $\Sigma_0$ be the subset of $\Sigma$ consisting of those blocks $\sigma' \in \Sigma$
such that $H_\sigma\cap H_{\sigma'} = 1$, that is, $\sigma'$ lies in a regular orbit for $H_\sigma$.
By hypothesis, $|\Sigma_0| = \kappa |H_\sigma|$.
Let
\[
\Delta_0 = \{ \delta' \in \Delta \mid \delta' \in \sigma' \text{ for some } \sigma' \in \Sigma_0 \}
          = \bigcup_{\sigma' \in \Sigma_0} \sigma'.
\]
If $\delta' \in \Delta_0$ with $\delta' \in \sigma' \in \Sigma_0$, then
\[
H_\delta \cap H_{\delta'} \leqslant H_\sigma \cap H_{\sigma'} = 1.
\]
Hence, $\delta'$ lies in a regular orbit for $H_\delta$. Moreover,
\[
|\Delta_0| =\kappa|H_{\sigma}||\sigma|= \kappa |H_\sigma| \cdot |H_\sigma : H_\delta|.
\]
Therefore, $H_\delta$ has at least
$|\Delta_0|/|H_\delta| = \kappa |H_\sigma : H_\delta|^2$
regular orbits on $\Delta$.
\end{proof}

\begin{lemma} \label{lem:threetimess}
    Let $P \leqslant \Sym(\Delta)$ be a permutation $2$-group, and suppose that
    \[ \sum_{\substack{h \in P \\ {\bf o}(h) = 2}} \mathrm{fpr}_\Delta(h) \le 1 - 3\frac{|P|}{|\Delta|}, \]
    where ${\bf o}(h)$ denotes the order of $h$.
    Then $P$ has at least three regular orbits on $\Delta$.
\end{lemma}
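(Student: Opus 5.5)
We count the points of $\Delta$ that have nontrivial stabiliser in $P$ and show they cannot fill too much of $\Delta$. A point $\delta\in\Delta$ lies in a non-regular $P$-orbit exactly when $P_\delta\neq 1$. Since $P$ is a $2$-group, $P_\delta\neq1$ holds if and only if $P_\delta$ contains an element of order $2$; indeed any nontrivial $2$-group contains an involution. So the set of points in non-regular orbits is
\[ \bigcup_{\substack{h\in P\\ {\bf o}(h)=2}} \Fix_\Delta(h). \]
By the union bound and the definition \eqref{eq:fpr} of the fixed-point ratio, its size is at most
\[ \sum_{\substack{h \in P \\ {\bf o}(h) = 2}} |\Fix_\Delta(h)| = |\Delta|\sum_{\substack{h \in P \\ {\bf o}(h) = 2}} \mathrm{fpr}_\Delta(h) \le |\Delta| - 3|P|, \]
where the last step is the hypothesis.

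It follows that at least $3|P|$ points of $\Delta$ have trivial stabiliser, and hence lie in regular orbits. Each regular orbit has exactly $|P|$ points, so there are at least $3$ regular orbits.

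Note that $P$ need not be transitive on $\Delta$. The fixed-point ratio should therefore be read through its first expression $|\Fix_\Delta(h)|/|\Delta|$, which makes sense for any action. The orbit-counting second expression in \eqref{eq:fpr} is not needed.

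There is no real obstacle here. The only point needing care is the reduction from "nontrivial stabiliser" to "contains an involution", and this uses exactly that $P$ is a $2$-group (Cauchy's theorem, or simply taking a suitable power of any nontrivial element).
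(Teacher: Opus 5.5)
Your proof is correct and matches the paper's argument. You identify the points in non-regular orbits with the union of $\Fix_\Delta(h)$ over involutions $h$ (using that $P$ is a $2$-group), apply the union bound, and then split the remaining at least $3|P|$ points into regular orbits of size $|P|$. Your remark that $\mathrm{fpr}_\Delta$ should be read as $|\Fix_\Delta(h)|/|\Delta|$, since $P$ need not be transitive, is a sensible clarification.
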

\begin{proof}
    Observe that the union
    \[ \bigcup_{h \in P,\, h \ne 1} \Fix_\Delta(h) \]
    is precisely the set of points of $\Delta$ not lying in a regular orbit for $P$.
    Moreover, since $\Fix_\Delta(h) \subseteq \Fix_\Delta(h^2)$ for all $h \in P$,
    it suffices to consider elements of order $2$.
    Consequently, $P$ has at least three regular orbits on $\Delta$ if and only if
    \[ \left| \bigcup_{\substack{h \in P \\ {\bf o}(h) = 2}} \Fix_\Delta(h) \right|
   \le |\Delta| - 3|P| . \]
   By a union bound, a sufficient condition for having three regular orbits is
   \[ \sum_{\substack{h \in P \\ {\bf o}(h) = 2}} \mathrm{fpr}_\Delta(h)  \le 1 - 3\frac{|P|}{|\Delta|} ,\]
   as desired.
\end{proof}

\subsection{Graphs and groups}

Let $\Gamma$ be a graph (or a digraph),
let $G \leqslant \Aut(\Gamma)$ be a group of automorphisms, and let $\alpha\in \V\Gamma$.
We say that $\Gamma$ is $(G,s)$-{\em arc-transitive} if $G$ acts transitively on the set of $s$-arcs of $\Gamma$.
When $G=\Aut(\Gamma)$, we omit the label $G$ and we simply say that $\Gamma$ is $s$-{\em arc-transitive}.

Let $G_\alpha^{\Gamma(\alpha)}$ be the permutation group induced by $G_\alpha$ on $\Gamma(\alpha)$.
Observe that a $G$-vertex-transitive graph $\Gamma$ is arc-transitive if and only if $G_\alpha^{\Gamma(\alpha)}$
is transitive on $\Gamma(\alpha)$,
and $(G,2)$-arc-transitive if and only if $G_\alpha^{\Gamma(\alpha)}$ is $2$-transitive on $\Gamma(\alpha)$.
For $r \geq 0$, let $G_\alpha^{[r]} \leqslant \Aut(\Gamma)$ be the automorphisms fixing a ball of radius $r$ around $\alpha$.
If $\Gamma$ is connected, then $G_\alpha^{[r]} =1$ for some $r$.

An edge- and vertex-transitive group of automorphisms $G$ of a connected graph $\Gamma$
that is not arc-transitive is called {\em half-arc-transitive}.
Note that in this case $G$ has two orbits on arcs, each orbit containing precisely one arc underlying each edge.
If $\Gamma$ is half-arc-transitive and $A$ is an orbit of $G$ on the arc-set of $\Gamma$,
then $(\V\Gamma,A)$ is an arc-transitive digraph, denoted $\vGa^{(G)}$, whose underlying graph is $\Gamma$.
In particular, if $\Gamma$ has valency $4$,
then the in-valence and out-valence of every vertex of $\vGa^{(G)}$ is $2$.

\subsection{Split and merge}\label{sec:splitMerge}

The operations of {\em splitting} and {\em merging} were introduced
in \cite[Constructions~7 and~11]{PotocnikSpigaVerret2013} to create a framework for translating results
from $3$-valent graphs with a perfect matching invariant under the action of their automorphism groups 
into results for $4$-valent graphs with a $2$-factor invariant under the action of their automorphism groups,
and vice versa.
The fact that these operators are the inverse of one another has been proved
in \cite[Theorem~12]{PotocnikSpigaVerret2013} and \cite[Theorem~2.9]{BarbieriGrazianSpiga2025},
up to a minor caveat.

This correspondence has a wide range of applications.
It has been used in \cite{PotocnikSpigaVerret2013} to construct censuses of graphs,
in \cite{PotocnikSpiga2021} to study fixed-point ratios,
in \cite{BarbieriGrazianSpiga2025} to investigate the asymptotic behaviour of the order of semiregular elements,
and in \cite{BarbieriZozaya} to construct cubic graphs of arbitrary even girth.
We refer the reader to \cite[Section~2.4]{BarbieriGrazianSpiga2025} and \cite[Section~4]{PotocnikSpigaVerret2013} 
for further details on these operations.

\smallskip\noindent\textsc{Splitting.}
Let $\Delta$ be a $4$-valent graph,
$G \leqslant \Aut(\Delta)$ vertex-transitive, edge-transitive, but not arc-transitive.
It follows that $G$ stabilises a $2$-factor $\mathcal{C}$ of $\Delta$.
We build a $3$-valent graph, $\mathrm{s}(\Delta,\mathcal{C})$, whose vertex-set is
\[ \V\mathrm{s}(\Delta,\mathcal{C}) = 
\left\{(\alpha,\mathbf{c}) \in \V\Delta \times \mathcal{C} \mid \alpha \in \V\mathbf{c} \right\} ,\]
and such that two vertices $(\alpha,\mathbf{c})$ and $(\beta,\mathbf{d})$ are adjacent
if either $\mathbf{c}$ and $\mathbf{d}$ are distinct and $\alpha= \beta$,
or $\mathbf{c}=\mathbf{d}$ and $\alpha$ and $\beta$ are adjacent in $\mathbf{c}=\mathbf{d}$.
The graph $\mathrm{s}\Delta$ is the {\em split} of the pair $(\Delta,\mathcal{C})$.
Note that $G$ acts faithfully and transitively on $\V\mathrm{s}\Delta$.

\smallskip\noindent\textsc{Merging.}
Let $\Gamma$ be a vertex-transitive but not arc-transitive cubic graph,
and let $G \leqslant \Aut(\Gamma)$ be vertex-transitive but not arc-transitive.
Then, for each $\gamma \in \V\Gamma$, there is a unique neighbour $\gamma'$ that is fixed pointwise by $G_\gamma$. Clearly, $(\gamma')' = \gamma$ and $G_\gamma = G_{\gamma'}$.
Moreover, the set
\[ \mathcal{M} = \{\{\gamma,\gamma'\} \mid \gamma \in \V\Gamma\} \]
is a complete matching of $\Gamma$.
The {\em merge} of $\Gamma$ is the graph ${\rm m}\Gamma$ whose vertex set is $\mathcal{M}$,
with two vertices $e_1, e_2 \in \mathcal{M}$ adjacent if and only if the corresponding edges of $\Gamma$ are at distance $1$. Note that ${\rm m}\Gamma$ can equivalently be obtained by contracting to a vertex the edges in $\mathcal{M}$.
Moreover, $G$ acts faithfully and transitively on $\V{\rm m}\Gamma$ and $E{\rm m}\Gamma$.

Two infinite families of cubic graphs have degenerate merged graphs,
namely the circular and M\"{o}bius ladders.

\begin{definition}\label{def:ladders}
    For any $n \ge 3$, a {\em circular ladder} is any graph isomorphic to the Cayley graph
    $$\Cay(\mathbb{Z}_n\times\mathbb{Z}_2,\{(0,1),(1,0),(-1,0)\}) . $$
    For any $n\ge 2$, a {\em M\"{o}bius ladder} is any graph isomorphic to the Cayley graph
    $$\Cay(\mathbb{Z}_{2n},\{1,-1,n\}).$$
\end{definition}

\subsection{Normal quotient method}\label{sec:normalQuot}

The notion of normal quotient of a graph, introduced in \cite[Section~4]{Praeger1993},
is the group-theoretic special case of the usual quotient construction in graph theory.
We record a version for digraphs.

\begin{definition}
Let $\vec \Gamma$ be a connected digraph and let $N \leqslant \Aut(\vec \Gamma)$.
The {\em normal quotient digraph} $\vec \Gamma/N$ is the digraph
whose vertices are the $N$-orbits on $\V\Gamma$,
with two distinct orbits $\alpha^N$ and $\beta^N$ joined by an arc $(\alpha^N,\beta^N)$ in $\vec \Gamma/N$
whenever there exist vertices $\alpha' \in \alpha^N$ and $\beta' \in \beta^N$
such that $(\alpha',\beta')$ is an arc of $\vec \Gamma$.
\end{definition}

We remark that, if $\vec \Gamma$ is connected, then so is $\vec \Gamma/N$.
Moreover, if $N$ is normalized by an overgroup $G \leqslant \Aut(\vec \Gamma)$,
then $G/N$ acts (possibly unfaithfully) on $\vec \Gamma/N$ as a group of automorphisms,
with vertex-stabiliser
\[(G/N)_{\alpha^N} = G_\alpha N / N .\]
If $G$ is vertex- or arc-transitive on $\vec \Gamma$, then $G/N$ is vertex- or arc-transitive on $\vec \Gamma/N$, respectively.
If $N$ is semiregular on $\V \vec\Gamma$, then in fact $(G/N)_{\alpha^N} \cong G_\alpha$.
If $G$ is arc-transitive, then the valency of $\vec \Gamma/N$ divides that of $\vec \Gamma$,
while if $G$ is only vertex-transitive, the valency of $\vec \Gamma/N$ is at most that of $\vec \Gamma$.

The analogous definition and statements apply to graphs, replacing arcs with edges.

\subsection{Structure of a vertex-stabiliser} \label{subsec:4}

For the rest of this section we specialise to connected vertex-transitive cubic graphs.
Let $\Gamma$ be a connected cubic graph,
$G \leqslant \Aut(\Gamma)$ a vertex-transitive group of automorphisms, and $\alpha \in \V\Gamma$.
A connectedness argument shows that $G_\alpha$ is a $\{2,3\}$-group (in particular, $G_\alpha$ is soluble).
In fact, if $x \in G_\alpha$, then $x^6 \in  G_\alpha^{[1]}$,
so $x^{6^r} \in  G_\alpha^{[r]}$,
and since $\Gamma$ is connected the order of $x$ is a divisor of $6^r$ for some $r$.
Moreover, it is clear that $G_\alpha$ is a $2$-group if and only if $G$ is not arc-transitive on $\Gamma$.

It is a fundamental theorem of Tutte \cite{Tutte1947,Tutte1959,Sims1967} that, if $G$ is arc-transitive,
then $|G_\alpha|$ divides $48$.
On the other hand, if $G$ is not arc-transitive, then $G_\alpha$ is a (possibly trivial) $2$-group.
We now use the work of Djokovi\'c \cite{Djokovic1980} to obtain an explicit group presentation for $G_\alpha$,
thus deducing crucial properties that will be of continuous use in this paper.

\begin{lemma} \label{lem:Galpha}
	Let $\Gamma$ be a vertex-transitive cubic graph,
	and let $G \leqslant \Aut(\Gamma)$ be not arc-transitive.
    If $\alpha \in \V\Gamma$, then $G_\alpha$ is a $2$-group of nilpotency class at most $2$ and exponent at most $4$.
\end{lemma}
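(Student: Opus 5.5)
The plan is to reduce to a problem about $4$-valent edge-transitive graphs via merging (\cref{sec:splitMerge}) and then read off the structure of $G_\alpha$ from Djokovi\'c's classification of locally-$D_4$ amalgams \cite{Djokovic1980}.

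First I dispose of degenerate cases. As recalled in \cref{subsec:4}, $G_\alpha$ is a $2$-group. By \cref{sec:splitMerge} it fixes a unique neighbour $\alpha'$ of $\alpha$, so it acts on the other two neighbours $\beta,\gamma$. If $G_\alpha$ fixes $\beta$ and $\gamma$ for every $\alpha$, then $G_\alpha=G_\alpha^{[1]}$. A standard connectedness argument (every neighbour $\delta$ of $\alpha$ has $G_\delta=G_\alpha$, so $G_\alpha$ fixes everything) then gives $G_\alpha=1$, and there is nothing to prove. Hence I may assume that $G_\alpha$ swaps $\beta$ and $\gamma$. In that case $G$ has exactly two orbits on edges, namely the matching $\mathcal{M}$ and its complement. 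Passing to the merge ${\rm m}\Gamma$, the group $G$ acts faithfully on ${\rm m}\Gamma$, transitively on vertices and on edges. Let $e=\{\alpha,\alpha'\}$. The vertex stabiliser $G_e$ contains $G_\alpha$ with index at most $2$, according to whether some element swaps $\alpha$ and $\alpha'$.

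Next I look at the local action $G_e^{{\rm m}\Gamma(e)}$. The four neighbours of $e$ fall into the two pairs coming from $\alpha$ and from $\alpha'$. Hence this local action is a $2$-group preserving a partition into two blocks of size $2$, so it lies in $D_4$. There are two cases.
\begin{enumerate}[(a)]
\item If it is transitive, then $G$ is arc-transitive on ${\rm m}\Gamma$ with locally-$D_4$ (or locally-$\mathbb{Z}_4$ / $\mathbb{Z}_2^2$) action. The amalgam $(G_e, G_{\{e,f\}})$ is then one of those classified in \cite{Djokovic1980}.
\item If it is intransitive, then ${\rm m}\Gamma$ is $G$-half-arc-transitive, and $G_e=G_\alpha$. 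In the oriented digraph $\vGa^{(G)}$ of out-valency $2$, the stabiliser is generated in the standard way by a chain of involutions along an alternating path. This again falls under Djokovi\'c's analysis: his presentations cover the locally dihedral case, and the half-arc-transitive case embeds as the index-$2$ subamalgam.
\end{enumerate}

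From the explicit presentations, one checks that the vertex stabiliser, and hence its subgroup $G_\alpha$, has the required properties. The stabilisers have the shape $\langle x_1,\dots,x_k\rangle$ with involutory generators whose pairwise commutators are central, for instance $G_\alpha\cong \mathbb{Z}_2^{k}$ or an extension of an elementary abelian group by a central elementary abelian group. Such a group has class at most $2$ and satisfies $x^4=1$, because $(xy)^2=x^2y^2[y,x]$ lies in an elementary abelian central subgroup. Both properties pass to the subgroup $G_\alpha$.

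The main obstacle is the middle step. I need to match precisely the hypotheses of \cite{Djokovic1980}, namely the amalgam of vertex and edge stabilisers in ${\rm m}\Gamma$, including the half-arc-transitive subcase, and to verify from each presentation that $[G_e,G_e]$ is central and elementary abelian. If the translation is awkward, I would instead argue directly. Write $K_i$ for the pointwise stabiliser of the alternating path of length $i$ starting at $\alpha$. Then each $K_i/K_{i+1}$ has order at most $2$. I would then show directly that commutators of the generating involutions, taken along the path, land in a central elementary abelian subgroup.
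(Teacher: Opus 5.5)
Your route is in substance the paper's. The vertex, arc and edge stabilisers of ${\rm m}\Gamma$ at $e=\{\alpha,\alpha'\}$ form exactly the amalgam $(G_{\{\alpha,\alpha'\}},G_{\alpha,\beta},G_{\{\alpha,\beta\}})$, which the paper feeds straight into Djokovi\'c's theorem without passing to the merge. The gap is in the last step, which carries the whole content of the lemma, and there you aim at the wrong group. You assert that the vertex stabiliser $G_e=G_{\{\alpha,\alpha'\}}$ of ${\rm m}\Gamma$ has class at most $2$ and exponent at most $4$, and then pass to the subgroup $G_\alpha$. This is false in general.

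For a counterexample, take $G_e=D_{16}=\langle\rho,\tau\rangle$ and $G_{\alpha,\beta}=\langle\rho^4,\tau\rangle$. Let $G_{\{\alpha,\beta\}}\cong D_8$ contain $G_{\alpha,\beta}$ as a Klein four subgroup in which $\rho^4$ is not central. This is a faithful amalgam of dihedral type and degree $(4,2)$: the only nontrivial subgroup of $G_{\alpha,\beta}$ normal in $D_{16}$ is $\langle\rho^4\rangle$, and it is not normal in $G_{\{\alpha,\beta\}}$. So it arises from finite graphs. There the block kernel is $G_\alpha=\langle\rho^2,\tau\rangle\cong D_8$, while $G_e\cong D_{16}$ has class $3$ and exponent $8$.

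So the properties must be read off $G_\alpha$ itself, the index-$2$ kernel of $G_e$ on the two blocks. Djokovi\'c's theorem gives
$G_\alpha=\langle a_0,\dots,a_{n-1}\rangle$ with $|G_\alpha|=2^n$ and all $a_i$ involutions. It also gives $[a_i,a_j]=1$ for $|i-j|<m$, and $[a_i,a_j]\in\langle a_{n-m},\dots,a_{m-1}\rangle$ for $j-i\ge m$. The point you never check is that this subgroup is central. For $n-m\le k\le m-1$ and $0\le j\le n-1$ we have $|k-j|\le m-1$, so $a_k$ commutes with every generator. Only then does your (correct) squaring argument give exponent $4$. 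Your sentence that ``the stabilisers have the shape \dots\ with pairwise commutators central'' simply assumes the conclusion.

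There are also two smaller issues with your case split. First, case (b) never occurs. We have $G_{\alpha'}=G_\alpha$, which fixes $\alpha$, and $\alpha$ is the unique neighbour of $\alpha'$ fixed by $G_{\alpha'}$. Hence any $g$ with $\alpha^g=\alpha'$ also satisfies $\alpha'^g=\alpha$. So $|G_e:G_\alpha|=2$ and the local action on ${\rm m}\Gamma(e)$ is always transitive. You therefore do not need your unsupported claim that a half-arc-transitive situation is covered by Djokovi\'c as an ``index-$2$ subamalgam''.

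Second, the transitive local actions of order $4$ are not covered by Djokovi\'c's theorem, which requires dihedral type, so they must be handled separately, as the paper does. In that case $G_{\alpha,\beta}$ is normal in both $G_e$ and $G_{\{\alpha,\beta\}}$, hence in $G=\langle G_e,G_{\{\alpha,\beta\}}\rangle$. Since $G_{\alpha,\beta}\leqslant G_\alpha$ is core-free, $G_{\alpha,\beta}=1$ and $|G_\alpha|\le 2$.
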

\begin{proof}
    We can assume $|G_\alpha| \geq 8$.
    Observe that $G_\alpha$ fixes a unique neighbour $\alpha'$, and hence
    \( |G_{\{\alpha,\alpha'\}} : G_\alpha| = 2\).
    Furthermore, for $\beta \in \Gamma(\alpha) \setminus \{\alpha'\}$, 
    \[|G_\alpha : G_{\alpha,\beta}| = |G_{\{\alpha,\beta\}} : G_{\alpha,\beta}| = 2 . \]
    If we set
    \[ A_{-1} = G_{\{\alpha,\alpha'\}} , \quad A_0 = G_{\alpha,\beta}, \quad A_1 = G_{\{\alpha,\beta\}} , \]
    then, with the notation of~\cite{Djokovic1980}, the triple $(A_{-1},A_0,A_1)$ is an amalgam of degree $(4,2)$.
    Since $\Gamma$ is connected, $G = \langle A_{-1}, A_1 \rangle$ and the amalgam is faithful.
    As $|A_{-1} : A_0| = 4$, the action of $A_{-1}$ on the right cosets of $A_0$ yields either a dihedral group of order $8$ or a group of order $4$.
    In the latter case, $A_0$ is normal in $A_{-1}$, and hence $A_0$ is also normal in $G$.
    Since $G_\alpha$ is core-free in $G$, this forces $A_0 = 1$, which implies $|G_\alpha| = 2$.
    Since we are assuming $|G_\alpha| \ge 8$, the amalgam $(A_{-1},A_0,A_1)$ is of dihedral type.

    Let $m$ and $n$ be two integers such that $|G_\alpha| = 2^n$ and $m$ is minimal with the property $3m \ge 2n$.
    By~\cite[Theorem]{Djokovic1980},
    \begin{align*}
        G_\alpha &= \langle a_0, \ldots, a_{n-1} \rangle, \\
        a_i^2 &= 1 \qquad \text{for } 0 \le i \le n-1, \\
        [a_i,a_j] &= 1 \qquad \text{for } 0 \le |j-i| < m, 
    \end{align*}
and
$$
[a_i,a_j] = a_{n-m+i}^{\varepsilon(j-i,0)} a_{n-m+i+1}^{\varepsilon(j-i,1)} \cdots
a_{m+j-n}^{\varepsilon(j-i,j-i+2m-2n)} $$
for $j-i \ge m$, where each $\varepsilon(r,s) \in \{0,1\}$ satisfies the symmetry condition
\[
\varepsilon(r,s) = \varepsilon(r,r-s+2m-2n)
\]
for all $r,s$ with $m \le r \le n-1$ and $0 \le s \le r+2m-2n$.
Note that
\[ [G_\alpha,G_\alpha] = \langle a_{n-m}, \ldots , a_{m-1} \rangle \leqslant {\bf Z}(G_\alpha) ,\]
and hence $G_\alpha$ has nilpotency class at most $2$.
Since $G_\alpha$  is a group of class at most $2$ generated by involutions,
$G_\alpha$ has exponent at most $4$, which completes the proof.
\end{proof}

\subsection{Proof of \cref{cor:distinguishinCost}}\label{sec:corProof}

We first collect two lemmas which are needed for our study of distinguishing numbers.
We write $d \colon \V\Gamma^2 \to \mathbb{N}$ to denote the distance in the graph metric.

\begin{lemma}\label{lemma:K33}
    Let $\Gamma$ be a connected vertex-transitive cubic graph,
    and let $\alpha,\beta \in V\Gamma$ be distinct vertices.
    Suppose that $\Gamma(\alpha)=\Gamma(\beta)$.
    Then $\Gamma$ is isomorphic to ${\bf K}_{3,3}$.
\end{lemma}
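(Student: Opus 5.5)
Let $N=\Gamma(\alpha)=\Gamma(\beta)=\{\gamma_1,\gamma_2,\gamma_3\}$; note that $\alpha$ and $\beta$ are not adjacent, since $\beta\notin\Gamma(\beta)=\Gamma(\alpha)$. The plan is to show that the relation ``having the same neighbourhood'' forces the graph to be complete bipartite. It is an equivalence relation on $\V\Gamma$ preserved by $\Aut(\Gamma)$. Since $\Gamma$ is vertex-transitive, every vertex $\delta$ has a vertex $\delta^*\neq\delta$ with $\Gamma(\delta^*)=\Gamma(\delta)$. (Apply an automorphism mapping $\alpha$ to $\delta$; the image of $\beta$ works.)

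The key step is to apply this to the neighbours $\gamma_i$. For each $i$, the neighbourhood $\Gamma(\gamma_i)$ contains $\alpha$ and $\beta$, so it equals $\{\alpha,\beta,\epsilon_i\}$ for some third vertex $\epsilon_i$. Now take any twin $\gamma_i^*$ of $\gamma_i$. It is adjacent to $\alpha$, so $\gamma_i^*\in N$, and since $\gamma_i^*\neq\gamma_i$ it is some $\gamma_j$ with $j\neq i$. Hence $\Gamma(\gamma_j)=\Gamma(\gamma_i)$, which gives $\epsilon_i=\epsilon_j$. So the three vertices of $N$ are partitioned into twin classes, each of size at least $2$. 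Since $|N|=3$, either all three lie in one class, or one class has size $2$ and the third vertex $\gamma_k$ is in a class of size $1$. The second option is impossible, because every vertex has a twin and $\gamma_k$'s twin would have to lie in $N$ as well. Thus $\epsilon_1=\epsilon_2=\epsilon_3=:\epsilon$.

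It remains to identify the graph. The vertices $\alpha,\beta,\epsilon$ are pairwise distinct: $\epsilon\neq\alpha,\beta$ because the neighbourhood $\Gamma(\gamma_i)$ has three elements. Moreover $\epsilon$ is adjacent to $\gamma_1,\gamma_2,\gamma_3$, so $\Gamma(\epsilon)=N$. Therefore the set $\{\alpha,\beta,\epsilon\}\cup N$ is closed under taking neighbours. By connectedness it is all of $\V\Gamma$, and the edge set is exactly the nine edges between $\{\alpha,\beta,\epsilon\}$ and $N$. Hence $\Gamma\cong{\bf K}_{3,3}$.

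The only delicate point is the counting argument that excludes a single unmatched $\gamma_k$. I would handle it exactly as above: every vertex has a nontrivial twin, and any twin of a vertex in $N$ is adjacent to $\alpha$, so it also lies in $N$.
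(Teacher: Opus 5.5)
Your argument is correct and complete up to one small detail. The paper gives no argument of its own here: it calls the statement a straightforward verification and refers to \cite[Lemma~4.1]{PotocnikSpiga2021}. Your proposal therefore supplies a self-contained version of that verification.

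The heart of your proof can be phrased a little more conceptually. The twin relation is an $\Aut(\Gamma)$-invariant equivalence relation, so by vertex-transitivity all twin classes have a common size $k\ge 2$. Moreover, every neighbourhood is a union of twin classes: if $\gamma\in\Gamma(\alpha)$ and $\gamma'$ is a twin of $\gamma$, then $\alpha\in\Gamma(\gamma)=\Gamma(\gamma')$. Hence $k$ divides $3$, so $k=3$, which is exactly what your local count on $N$ establishes.

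The detail you should state explicitly is that $\epsilon\notin N$. This holds because $\epsilon$ is adjacent to every vertex of $N$ and $\Gamma$ has no loops. Together with $\alpha,\beta\notin N$, it guarantees that $\{\alpha,\beta,\epsilon\}\cup N$ consists of six distinct vertices, so the graph you obtain really is ${\bf K}_{3,3}$.
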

\begin{proof}
    This requires a straightforward verification:
    we refer to \cite[Lemma~4.1]{PotocnikSpiga2021} for an explicit computation.
\end{proof}
 
\begin{lemma}\label{lemma:prism}
    Let $\Gamma$ be a cubic connected vertex-transitive graph,
    and let $\alpha,\beta,\gamma \in V\Gamma$ be three distinct vertices.
    Suppose that $\alpha$ and $\gamma$ are adjacent, that $|\Gamma(\alpha) \cap \Gamma(\beta)|=2$,
    and that $d(\alpha,\beta)=d(\beta,\gamma)=2$.
    Then $\Gamma$ is isomorphic to the circular ladder on $6$ vertices.
\end{lemma}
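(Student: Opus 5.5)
We work locally around $\beta$: we show that the two common neighbours of $\alpha$ and $\beta$, together with $\gamma$ and the remaining neighbour of $\beta$, force two disjoint triangles joined by a perfect matching.

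Write $\Gamma(\alpha)=\{\gamma,\delta_1,\delta_2\}$. Since $d(\alpha,\beta)=2$, we have $\beta\notin\Gamma(\alpha)$. Since $d(\beta,\gamma)=2$, the vertex $\gamma$ is not a neighbour of $\beta$. Hence the two common neighbours of $\alpha$ and $\beta$ must be $\delta_1$ and $\delta_2$, and we write $\Gamma(\beta)=\{\delta_1,\delta_2,\epsilon\}$. The vertex $\epsilon$ is distinct from $\alpha$ (because $\beta\not\sim\alpha$) and from $\gamma$. So $\alpha,\beta$ and $\delta_1,\delta_2$ form a $4$-cycle $\alpha\delta_1\beta\delta_2$. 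Each $\delta_i$ has exactly one further neighbour besides $\alpha$ and $\beta$.

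Now use $d(\beta,\gamma)=2$: some neighbour of $\beta$ is adjacent to $\gamma$.
\begin{itemize}
\item $\epsilon\sim\gamma$. Then consider the third neighbours $x_1,x_2$ of $\delta_1,\delta_2$.
\begin{itemize}
\item If $x_1=\delta_2$, then $\delta_1\sim\delta_2$, so $\delta_1$ and $\delta_2$ have neighbourhoods $\{\alpha,\beta,\delta_2\}$ and $\{\alpha,\beta,\delta_1\}$. The set $\{\alpha,\beta,\delta_1,\delta_2\}$ spans a $K_4$ minus the edge $\alpha\beta$. Such a configuration means some edge ($\delta_1\delta_2$) lies in two triangles. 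By vertex-transitivity every vertex lies on an edge in two triangles, and a short local count then rules this out unless $\Gamma\cong{\bf K}_4$; but ${\bf K}_4$ has diameter $1$, contradicting $d(\alpha,\beta)=2$.
\item Otherwise $\delta_1,\delta_2$ have distinct or equal external neighbours. If $x_1=x_2=x$, then $\Gamma(\delta_1)=\Gamma(\delta_2)=\{\alpha,\beta,x\}$, and \cref{lemma:K33} gives $\Gamma\cong{\bf K}_{3,3}$. In ${\bf K}_{3,3}$, however, two vertices at distance $2$ have three common neighbours, not two, which is a contradiction.
\item So $x_1\ne x_2$.
\end{itemize}
\item $\delta_i\sim\gamma$ for some $i$. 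Then $\gamma$ is the third neighbour of $\delta_i$, and $\alpha\gamma\delta_i$ is a triangle.
\end{itemize}

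The key step is to use vertex-transitivity to transport the local structure. The vertex $\alpha$ has a vertex $\beta$ at distance $2$ sharing two neighbours with it, and every vertex has the same property. Counting $4$-cycles through a vertex, together with the triangle count, pins the graph down. In the triangle case, $\alpha$ lies on the triangle $\alpha\gamma\delta_i$, so every vertex lies on a triangle. In a cubic graph where not both neighbours-in-triangle are shared, triangles are vertex-disjoint, and each vertex lies in exactly one triangle. Then $\delta_j$ (for $j\neq i$) lies in a triangle avoiding $\alpha$. Since $\delta_j$ has neighbours $\alpha,\beta$ and one more, that triangle must be $\{\delta_j,\beta,\epsilon\}$, so $\delta_j\sim\epsilon$. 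Likewise $\beta$'s triangle is $\beta\delta_j\epsilon$, and $\delta_i$'s triangle is $\alpha\gamma\delta_i$. Then $\gamma$ and $\epsilon$ each need a third neighbour. A $4$-cycle through $\gamma$ exists by transitivity, and this forces $\gamma\sim\epsilon$, closing the prism on $\{\alpha,\gamma,\delta_i\}\cup\{\beta,\epsilon,\delta_j\}$. The remaining case, $\epsilon\sim\gamma$ with no triangles, is handled by transporting the $4$-cycle count. With no triangles, $\alpha$ lies on the $4$-cycles $\alpha\delta_1\beta\delta_2$ and $\alpha\gamma\epsilon\beta$… but $\alpha\not\sim\beta$, so that second cycle is not one. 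I would count $4$-cycles per vertex: $\alpha$ is on the $4$-cycle via $\delta_1,\delta_2$, while $\gamma$ must also be on one by vertex-transitivity. Chasing the bounded neighbourhoods should either close up into the prism or contradict $d(\beta,\gamma)=2$ or the degree constraints.

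The main obstacle is this last triangle-free case. The bookkeeping there must be done carefully, or a small computer-free enumeration of the radius-$2$ ball around $\alpha$ must be performed, using the transitivity constraint that every vertex has the same number of $4$-cycles through it.
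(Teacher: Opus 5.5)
Your proposal has a genuine gap, and it is located exactly where the statement itself fails. The case where $\gamma$ is adjacent to $\epsilon$ but to neither $\delta_i$ is left with the hope that chasing neighbourhoods will ``close up into the prism or contradict''. In that case $\Gamma$ is triangle-free, because a triangle through $\alpha$ would need $\delta_1\sim\delta_2$, which your triangle count excludes. So the argument cannot close up into the prism, which has triangles. It cannot reach a contradiction either, because none exists.

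Take the M\"obius ladder on $8$ vertices (the Wagner graph) $\mathrm{Cay}(\mathbb{Z}_8,\{1,-1,4\})$, with $\alpha=0$, $\beta=5$, $\gamma=7$. Then $\Gamma(0)=\{1,4,7\}$ and $\Gamma(5)=\{1,4,6\}$, so $\Gamma(\alpha)\cap\Gamma(\beta)=\{1,4\}$ and $d(\alpha,\beta)=2$. Also $7\notin\Gamma(5)$ and $7\sim 6\sim 5$, so $d(\beta,\gamma)=2$. In your notation $\delta_1=1$, $\delta_2=4$, $\epsilon=6$, $x_1=2$, $x_2=3$.

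So the lemma is false as stated. The paper's own treatment of this case (its case $\gamma\sim\eta$, where $\eta$ is your $\epsilon$) fails as well. Its $5$-cycle bookkeeping is incomplete: in this graph every vertex lies on five $5$-cycles, not three. Moreover $\gamma$ and $\eta$ do lie on a square, namely $7,6,2,3$. Do not try to imitate that argument.

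The tool you name, that every vertex lies on the same number of $4$-cycles, is the right one, and carried out it yields the corrected statement. Write $[\,\cdot\,]$ for $1$ or $0$ according as a condition holds, and let $g,h$ be the third neighbours of $\gamma,\epsilon$. In the triangle-free case $x_1\neq x_2$ by \cref{lemma:K33}. Direct counting then shows:
\begin{itemize}
\item $\alpha$ lies on $1+[g=x_1]+[g=x_2]\le 2$ four-cycles;
\item $\delta_1$ lies on $1+[x_1=g]+[x_1=h]$ four-cycles;
\item $\beta$ lies on $1+[x_1=h]+[x_2=h]$ four-cycles.
\end{itemize}

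Suppose first the common value is $1$. Then the $4$-cycles partition $V\Gamma$, and the one through $\gamma$ is $\gamma\epsilon hg$. An automorphism sending $\alpha$ to $\gamma$ must send $\gamma$ to $\alpha$, since each is the other's unique neighbour off its $4$-cycle. It therefore swaps the two $4$-cycles. But the vertices of $\alpha\delta_1\beta\delta_2$ with a neighbour on the other $4$-cycle are $\alpha,\beta$, which are non-adjacent. The corresponding vertices of $\gamma\epsilon hg$ are $\gamma,\epsilon$, which are adjacent. This is a contradiction.

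Suppose instead the value is $2$. Up to swapping $\delta_1$ and $\delta_2$ you get $g=x_2$ and $h=x_1$, and the count at $\gamma$ forces $x_1\sim x_2$. All eight vertices then have degree $3$, and the graph is the Wagner graph.

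Your triangle case is sound, apart from tidying the claim that $\delta_j$'s triangle avoids $\alpha$. Your use of a $4$-cycle through $\gamma$ to force $\gamma\sim\epsilon$ is genuinely needed there. The paper's assertion that the six vertices already have full degree overlooks that $\gamma$ and $\eta$ each still lack a neighbour.

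The correct conclusion is therefore that $\Gamma$ is the circular ladder on $6$ vertices or the M\"obius ladder on $8$ vertices. The application in \cref{cor:distinguishinCost} survives, since in the Wagner graph the set $\{0,1,3\}$ has trivial setwise stabiliser.
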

\begin{proof}
    To fix our notation, we write
    \[ \Gamma(\alpha)=\{\delta,\varepsilon,\gamma\},
    \quad \hbox{and} \quad
    \Gamma(\beta)=\{\delta,\varepsilon,\eta\}. \]
    Since \(d(\beta,\gamma)=2\), the vertex \(\gamma\) is adjacent to one of
    \(\delta,\varepsilon,\eta\). Interchanging \(\delta\) and \(\varepsilon\) if necessary, we may
    assume that $\gamma$ is adjacent to $\delta$ or \(\eta\). We consider these two cases in turn.

    Suppose first that $\gamma$ and $\delta$ are adjacent. Then \(\{\alpha,\delta,\gamma\}\) is a triangle.
    By vertex-transitivity, the vertex \(\beta\) is contained in a triangle as well. Since
    $\Gamma(\delta)= \{\alpha,\beta,\gamma\}$, this triangle must be
    \(\{\beta,\varepsilon,\eta\}\).
    Thus $\varepsilon$ and $\eta$ are adjacent. Now the six vertices
    \[ \alpha,\delta,\gamma,\beta,\varepsilon,\eta \]
    span a triangular prism, and all of them already have three neighbours.
    Since \(\Gamma\) is connected, these are all the vertices of \(\Gamma\),
    and hence $\Gamma$ is isomorphic to the circular ladder on $6$ vertices.
    Note that, in this case $\gamma$ is also adjacent to $\eta$.

    We now consider the case in which $\gamma$ is adjacent to $\eta$, and we may also assume that $\gamma$ is adjacent to neither \(\delta\) nor \(\varepsilon\).
    
    Since the only possible triangle through \(\alpha\) would use the edge \(\delta, \varepsilon\),
    and the same is true for \(\beta\), either \(\delta , \varepsilon \in E\Gamma\) or neither \(\alpha\) nor \(\beta\) lies in a triangle. If \(\delta , \varepsilon \in E\Gamma\), then \(\delta\) and \(\varepsilon\) are each contained in two triangles, namely \(\{\alpha,\delta,\varepsilon\}\) and \(\{\beta,\delta,\varepsilon\}\), whereas \(\alpha\) is contained in only one. This contradicts vertex-transitivity. Hence, the girth is $4$, as witnessed by the square $\{\alpha,\delta,\beta,\varepsilon\}$.

    Observe that $\{\alpha,\gamma,\eta,\beta,\delta\}$ and
    $\{\alpha,\gamma,\eta,\beta,\varepsilon\}$ are two distinct $5$-cycles.
    By vertex-transitivity, $\delta$ and $\varepsilon$ are contained in at least two $5$-cycles as well.
    Since the remaining $5$-cycles need to pass through $\alpha$ or $\beta$, the only way in which the count of $5$-cycles coincide
    through these points is that the remaining neighbours of $\delta$ and $\varepsilon$ are adjacent.
    In particular, three $5$-cycles pass through $\alpha$, $\beta$, $\delta$ and $\varepsilon$.
    We return our focus to the vertices $\gamma$ and $\eta$.
    The third $5$-cycle through them should pass through the remaining neighbours of both. The final contradiction arises because, if this holds, either the pair $\gamma$ and $\eta$ is not contained in a square, against vertex-transitivity, or the last $5$-cycle that we built contains a chord and, consequently, the girth of $\Gamma$ is $3$, which we already excluded.
\end{proof}

We are ready to prove \cref{cor:distinguishinCost} assuming the veracity of \cref{thrm:main}.

\begin{proof}[Proof of \cref{cor:distinguishinCost}]
If $\Gamma$ appears in \cref{tab:main}, then the result follows by direct inspection.
    If $\Gamma$ is a split Praeger--Xu graph, then its distinguishing number is $2$,
    while its distinguishing cost is unbounded \cite[Theorem~9.11]{Imrich2022}.

    Now let $\{\alpha,\beta\}$ be a base for the action of $\Aut(\Gamma)$ on $V\Gamma$.
    Choose a vertex $\gamma$ adjacent to $\alpha$ but not to $\beta$. Such a vertex exists in view of \cref{lemma:K33}.
    Observe that, unless $d(\alpha,\beta)=d(\gamma,\beta)$, the setwise stabiliser of $\{\alpha,\beta,\gamma\}$ is trivial,
    as the three pairwise distances between these vertices are distinct.
    Suppose therefore that $d(\alpha,\beta)=d(\gamma,\beta)$,
    and let $\delta$ be the neighbour of $\alpha$ lying on a geodesic segment from $\beta$ to $\alpha$.
    Then $d(\delta,\beta)=d(\alpha,\beta)-1$.
    If $d(\alpha,\beta)>2$, then it follows that the pairwise distances among $\alpha,\beta,\delta$ are distinct,
    and hence the setwise stabiliser of $\{\alpha,\beta,\delta\}$ is trivial.
    Otherwise, let $\varepsilon$ be the last neighbour of $\alpha$.
    If $\varepsilon$ is not a neighbour of $\beta$, then the setwise stabiliser of $\{\alpha,\beta,\varepsilon\}$ is trivial.
    Hence, we are left with assuming that $\Gamma(\alpha) \cap \Gamma(\beta) = \{\delta, \varepsilon\}$.
    \cref{lemma:prism} implies that $\Gamma$ is the circular ladder on $6$ vertices,
    where the distinguishing number is $2$ and the distinguishing cost is $3$.
    In all cases, colouring the three vertices in such a set with a colour different
    from all others yields a distinguishing colouring with two colours and cost $3$.
\end{proof}

\section{Arc-transitive graphs} \label{sec:AT}

The purpose of this section is to prove the following result,
which is the first piece of evidence towards the validity of \cref{thrm:main}.

\begin{proposition} \label{prop:arctran}
    Let $\Gamma$ be a connected arc-transitive cubic graph.
    If $\Gamma$ has base size greater than $2$, then $\Gamma$ is one of the graphs in \cref{tab:main}.
\end{proposition}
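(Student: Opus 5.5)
The plan follows the roadmap in \cref{sec:readersGuide}. First we show that base size greater than $2$ forces small girth, and then we reduce to a finite computation using the classification of arc-transitive cubic graphs of small girth.

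Let $A=\Aut(\Gamma)$ and $\alpha\in\V\Gamma$. By Tutte's theorem \cite{Tutte1947,Tutte1959}, $A$ acts regularly on the $s$-arcs of $\Gamma$ for some $1\le s\le 5$, and $|A_\alpha|=3\cdot 2^{s-1}$. The basic observation concerns a vertex $\beta$ at distance $s$ from $\alpha$ that is joined to $\alpha$ by a \emph{unique} $s$-arc $(\alpha=\gamma_0,\gamma_1,\ldots,\gamma_s=\beta)$. Any $g\in A_{\alpha}\cap A_\beta$ preserves this geodesic, so it fixes the whole $s$-arc, and regularity gives $g=1$. Hence $\{\alpha,\beta\}$ is a base.

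Suppose instead that the base size exceeds $2$. Then for every vertex $\beta$ at distance $s$ from $\alpha$ there are at least two $s$-arcs from $\alpha$ to $\beta$. Two distinct geodesics of length $s$ with common endpoints together contain a cycle of length at most $2s$. Therefore the girth $g$ of $\Gamma$ satisfies
\[
g\le 2s\le 10 .
\]
One should be a little careful when $s$ exceeds the diameter, i.e.\ when no vertex lies at distance $s$. In that case the number of vertices is at most $1+3+\cdots+3\cdot 2^{s-2}$, which is tiny, and the girth bound still holds trivially.

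Next we invoke the results of Conder, Lorimer and Morton \cite{ConderLorimer1989,Morton1991}. They show that arc-transitive cubic graphs of girth at most $9$ form an explicit finite list. For girth $10$ the situation is more delicate: besides finitely many sporadic graphs, one must also handle an infinite family. I expect the girth-$10$ case to be the main obstacle. My first approach there would be to sharpen the counting argument. For $s\le 4$ we have $2s\le 8<10$, which is impossible, so girth $10$ forces $s=5$. In that case the two $5$-arcs from $\alpha$ to $\beta$ must close up into a $10$-cycle through $\alpha$ and $\beta$ for \emph{every} $\beta$ at distance $5$. Every vertex at distance $5$ would then be antipodal on a girth cycle through $\alpha$. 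Counting $10$-cycles through $\alpha$ (their number is bounded in terms of $|A_\alpha|=48$ and the cycle-regularity of the action) against the $3\cdot 2^4=48$ vertices reachable by $5$-arcs bounds $|\V\Gamma|$, and hence leaves finitely many graphs. The $5$-arc-transitive candidates of small order are the Tutte--Coxeter graph and the Foster graph.

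Finally, for every graph on the resulting finite list (all of order at most a few hundred; the census of arc-transitive cubic graphs covers them), we compute the base size of $\Aut(\Gamma)$ directly, for instance in \textsc{Magma}. For each graph we search for a vertex $\beta$ with $A_\alpha\cap A_\beta=1$. The graphs with no such $\beta$ are exactly the nine graphs of \cref{tab:main}, and for these we also record the base sizes listed in the table.
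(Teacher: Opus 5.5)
\textbf{Verdict: there is a genuine gap.} Your reduction to girth at most $2s$ is the same as the paper's. The paper takes an arbitrary $s$-arc $X$ and its image $X^g$ rather than a geodesic, so no diameter caveat is needed. The problem is the finiteness step, in two places.

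\textbf{The girth-$10$ case.} Your count only involves the ball of radius $5$ about $\alpha$. It shows that at most $24$ vertices lie at distance $5$, each antipodal to $\alpha$ on a $10$-cycle. It says nothing about vertices at distance $6,7,\dots$, so it bounds neither the diameter nor $|\V\Gamma|$. Your local analysis does give more: when each such $\beta$ has exactly two geodesics, every $5$-arc lies in a unique $10$-cycle, so the automorphism shifting a $5$-arc one step along its $10$-cycle has order $10$. But deciding that this relation leaves only finitely many possibilities is a global problem about quotients of the universal group of $5$-arc-regular cubic graphs, not a count. The paper supplies this global input by quoting Morton's classification of $5$-arc-transitive cubic graphs of girth at most $13$ \cite{Morton1991}, which covers $s=5$ directly. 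With that input no infinite girth-$10$ family can arise, since, as you note, girth $10$ forces $s=5$.

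\textbf{The girth-$\le 9$ case.} The finiteness statement you quote is false without a restriction on $s$. The skeleta of the toroidal maps $\{6,3\}_{b,c}$ already give infinitely many arc-transitive cubic graphs of girth $6$. You must keep track of $s$, as the paper does:
\begin{itemize}
\item $s=1$: $\{\alpha,\alpha_1\}$ is a base.
\item $s=2$: the girth is at most $4$, and a direct argument gives ${\bf K}_4$, ${\bf K}_{3,3}$ or the cube.
\item $s\in\{3,4\}$: the girth is at most $8$, and Conder--Nedela \cite[Theorem~2.1]{ConderNedela2007} bounds such graphs by $570$ vertices.
\item $s=5$: this is Morton's case above.
\end{itemize}
Once the argument is organised by $s$ rather than by girth, your final computer check over the resulting finite list is exactly what is needed.
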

\begin{proof}
Let $G=\Aut(\Gamma)$ and $\alpha \in \V\Gamma$.
By Tutte's theorem we have $|G_\alpha| = 3 \cdot 2^{s-1}$ for some $s \in \{1,2,3,4,5\}$,
and $G$ acts regularly on the set of $s$-arcs of $\Gamma$.
Let $X=(\alpha = \alpha_0,\ldots,\alpha_s)$ be an $s$-arc of $\Gamma$.
If $s = 1$, then $G_{\alpha} \cap G_{\alpha_1} = 1$ and we are done.

Therefore, $s \ge 2$.
If $G_{\alpha} \cap G_{\alpha_s}\ne 1$,
then there exists a nontrivial automorphism $g \in G_{\alpha} \cap G_{\alpha_s}$.
Applying $g$ to $X$, we obtain the $s$-arc
\[
X^g = (\alpha, \alpha_1^g, \ldots, \alpha_{s-1}^g, \alpha_s).
\]
Since the pointwise stabiliser of $X$ is trivial, $X^g \neq X$.
Therefore, by concatenating $X$ and $X^g$, we obtain the closed walk 
\[
(\alpha, \alpha_1, \ldots, \alpha_s, \alpha_{s-1}^g, \ldots, \alpha_1^g, \alpha) ,
\]
and we deduce that $\Gamma$ has girth at most $2s$.

If $s = 2$, the girth of $\Gamma$ is at most $4$.
If $\Gamma$ has girth $3$, then the $2$-arc-transitivity of $\Gamma$ immediately implies
that $\Gamma$ is the complete graph ${\bf K}_4$.
If $\Gamma$ has girth $4$, then $2$-arc-transitivity and a brief analysis show
that either $\Gamma$ is ${\bf K}_{3,3}$ or the cube graph.

Suppose now that $s \in \{3,4\}$.
Conder and Nedela~\cite[Theorem~2.1]{ConderNedela2007} showed that if $\Gamma$ has girth at most $9$,
then it has at most $570$ vertices, and all such graphs are listed in~\cite[Table~1]{ConderNedela2007}.
The result then follows by checking, with the aid of a computer, the graphs arising in this classification.
The analogous classification when $s = 5$ and when the girth is at most $13$ is due to Morton~\cite{Morton1991}
(see also~\cite[page~760, lines~12--14]{ConderNedela2007}), and this case can be handled in a similar way.

To conclude, we observe that all graphs in \cref{tab:main} are distance-transitive.
\end{proof}

\begin{remark} \label{rem:prob1}
Although the proof of \cref{prop:arctran} is rather elementary,
it relies on deep results concerning cubic $s$-arc-transitive graphs and small girth.
This is one of the two major steps in the proof of \cref{thrm:main}
that we are not able to adapt to the case of $4$-valent arc-transitive graphs.
Indeed, the analogous classification of $4$-valent $s$-arc-transitive graphs with small girth is not only open,
but likely not meaningful, since there exist several classes of symmetric $4$-valent graphs with small girth.
Nevertheless, by following techniques similar to those used in \cite{ConderNedela2007,Morton1991},
it is possible to extract some information also in the $4$-valent case.
From a computational perspective, following the ideas in \cite{ConderNedela2007,Morton1991},
one can obtain an analogue of \cref{prop:arctran} for $4$-valent $2$-arc-transitive graphs
without requiring a classification of such graphs of small girth.
The only remaining obstruction arises in the case of $7$-arc-transitive $4$-valent graphs
where we could not conclude our analysis.
\end{remark}

\section{Praeger--Xu graphs and their split}\label{sec:PX}

This section introduces the Praeger--Xu graphs $\mathrm{C}(r,s)$ and their automorphism groups.
Although these graphs are $4$-valent, they play a pivotal role in the proof of \cref{thrm:main}.
This is due to the fact that most of the properties of the split Praeger--Xu graphs can be understood
from this family via the splitting and merging operations.

\subsection{Praeger--Xu graphs} \label{sub2}

The Praeger--Xu graphs were originally defined in \cite{PraegerXu1989} while studying graphs whose automorphism group contains a normal elementary abelian subgroup whose action is not semiregular.
Praeger--Xu graphs have been studied in detail by Gardiner, Praeger and Xu in
\cite{GardinerPraeger1994,Praeger1989HAT,PraegerXu1989}, and more recently in
\cite{BarbieriGrazianSpiga2022,BarbieriGrazianSpiga2023,JajcayPotocnikWilson2019,JajcayPotocnikWilson2022}.
Here, we introduce them through their directed counterparts defined in~\cite{Praeger1989HAT}.

Let $r$ be a positive integer with $r \ge 3$.
We define $\vec{\mathrm{C}}(r,1)$ to be the wreath product of an edgeless graph on two vertices by a directed cycle of length $r$.
In other words,
\[V\vec{\mathrm{C}}(r,1)=\mathbb{Z}_r\times\mathbb{Z}_2\]
with the out-neighbours of the vertex $(x, i)$ being $(x+1,0)$ and $(x+1,1)$.
We will identify the $(s-1)$-arc
\[(x,\epsilon_0)\to(x+1,\epsilon_1)\to\cdots\to(x+s-1,\epsilon_{s-1})\]
with the pair $(x; k)$ where $k=\epsilon_0\epsilon_1\ldots\epsilon_{s-1}$ is a string in $\mathbb{Z}_2$ of length $s$.

Now let $s$ be a positive integer with $s \ge 2$.
We let $V\vec{\mathrm{C}}(r,s)$ be the set of all $(s-1)$-arcs of $\vec{\mathrm{C}}(r, 1)$.
For every string $h$ in $\mathbb{Z}_2$ of length $s-1$, and for any $\epsilon\in \mathbb{Z}_2$,
we define the out-neighbours of $(x; \epsilon h) \in V\vec{\mathrm{C}}(r,s)$ to be $(x+1; h0)$ and $(x+1;h1)$.
Hence, the {\em Praeger--Xu graph} $\mathrm{C}(r,s)$ is defined as the underlying graph of $\vec{\mathrm{C}}(r,s)$.
Observe that $\mathrm{C}(r, s)$ is a connected $4$-valent graph with $r2^s$ vertices \cite[Theorem~2.8]{Praeger1989HAT}.

Let us now discuss the automorphisms of $\mathrm{C}(r, s)$.
 Every automorphism of $\vec{\mathrm{C}}(r,1)$ (or $\mathrm{C}(r, 1)$, respectively) acts naturally as an automorphism of $\vec{\mathrm{C}}(r, s)$ (or $\mathrm{C}(r, s)$, respectively) for every $s \ge 2$.
For $i \in \mathbb{Z}_r$, let $\tau_i$ be the transposition on $V\vec{\mathrm{C}}(r,1)$ swapping the vertices $(i,0)$
and $(i, 1)$ while fixing every other vertex.
This is  an automorphism of $\vec{\mathrm{C}}(r,1)$, and thus also of $\vec{\mathrm{C}}(r,s)$ for $s\ge 2$.
We set
\begin{equation}
    \label{eq:K}
    K = \langle \tau_i \mid i \in \mathbb{Z}_r\rangle,  
\end{equation}
and we observe that $K$ is isomorphic to $C_2^r$. Furthermore, let $\rho$ and $\sigma$ be the permutations
on $V\vec{\mathrm{C}}(r,1)$ defined by
\begin{equation}
    \label{eq:sigma}
    (i, \epsilon)^\rho = (i + 1, \epsilon) \quad \hbox{and} \quad (i, \epsilon)^\sigma = (-i, \epsilon).
\end{equation}
Then $\rho$ is an automorphism of $\vec{\mathrm{C}}(r, 1)$ of order $r$, and $\sigma$ is an involutory automorphism of $\mathrm{C}(r,1)$ (but not of $\vec{\mathrm{C}}(r,1)$). Observe that $\rho$ cyclically permutes the generators of $K$, while $\sigma$ is a permutation of such set of order $2$.
It follows that the group $\langle \rho, \sigma\rangle \cong D_{2r}$ normalises $K$.
We define
\begin{equation}
    \label{eq:H}
    H^+ = K \rtimes \langle \rho\rangle \quad \hbox{and} \quad H = K \rtimes \langle \rho, \sigma\rangle .
\end{equation}
Hence, for every $r \ge 3$ and $s \ge 1$,
\[ C_2^r \rtimes C_r \cong H^+ \leqslant \Aut(\vec{\mathrm{C}}(r,s))
\quad \textup{and} \quad 
C_2^r \rtimes D_{2r} \cong H \leqslant \Aut(\mathrm{C}(r, s))  \,.\]
Moreover, $H^+$ (or $H$, respectively) acts arc-transitively on $\vec{\mathrm{C}}(r,s)$ 
(or $\mathrm{C}(r,s)$, respectively) whenever $1 \le s \le r-1$.
If $r \neq 4$, then the groups $H^+$ and $H$ are, in fact,
the automorphism groups of $\vec{\mathrm{C}}(r,s)$ and $\mathrm{C}(r, s)$.

\begin{lemma}[\cite{PraegerXu1989} Theorem~2.13, and~\cite{Praeger1989HAT} Theorem~2.8]
    \label{bloodyhell}
    The automorphism group of a directed Praeger--Xu graph is \[\Aut(\vec{\mathrm{C}}(r, s)) = H^+ \,.\]
	If $r\ne 4$, then the automorphism group of a Praeger--Xu graph is
	\[\Aut(\mathrm{C}(r, s)) = H \,.\]
	Moreover,
	\begin{gather*}
		|\Aut(\mathrm{C}(4, 1)) : H| = 9 \,,\\
		|\Aut(\mathrm{C}(4, 2)) : H| = 3 \,,\\
		|\Aut(\mathrm{C}(4, 3)) : H| = 2 \,.
	\end{gather*}
\end{lemma}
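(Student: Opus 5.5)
The plan is to derive the statement from the cited results of Praeger and Xu, organised so that the structure of the argument is explicit; no new machinery beyond the definitions above is needed.

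\emph{Directed graphs.} First I check that $H^+\leqslant\Aut(\vec{\mathrm{C}}(r,s))$. This is immediate because each $\tau_i$ and $\rho$ preserves the out-neighbour relation of $\vec{\mathrm{C}}(r,1)$, and therefore also preserves the induced relation on $(s-1)$-arcs.

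For the reverse inclusion, let $A=\Aut(\vec{\mathrm{C}}(r,s))$. I would show that $A$ preserves the partition of the vertex set into the $r$ ``layers'' $\{(x;k)\}$ with fixed $x$. The argument is that two vertices $(x;\epsilon h)$ and $(x;\epsilon' h)$ have the same out-neighbourhood. Iterating this observation identifies the layers as the classes of the relation ``same set of vertices reachable by directed paths of length $j$'', for suitable $j$.

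Hence $A$ induces on the layers a group of automorphisms of the directed $r$-cycle, so modulo the kernel it lies in $\langle\rho\rangle$. It then suffices to show that the kernel $A_0$, which fixes every layer, is contained in $K$. An element of $A_0$ acts on each layer $\mathbb{Z}_2^s$ compatibly with the shift maps $\epsilon h\mapsto h\ast$ between consecutive layers. Following these compatibility conditions around the cycle, each coordinate position $x+j$ can only be toggled uniformly. The element is therefore a product of $\tau_i$'s, that is, it lies in $K$.

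\emph{Undirected graphs, $r\neq4$.} Here the extra step is to prove that every automorphism of $\mathrm{C}(r,s)$ either preserves or reverses the orientation. Given that, $\Aut(\mathrm{C}(r,s))=\Aut(\vec{\mathrm{C}}(r,s))\langle\sigma\rangle=H$.

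To prove the orientation claim, I would characterise the arcs of $\vec{\mathrm{C}}(r,s)$ inside $\mathrm{C}(r,s)$ intrinsically. The key observation is that the two out-neighbours of a vertex share their in-neighbourhood, which has size $2$; this gives the $4$-cycles $(x;\epsilon h),(x+1;h0),(x;\epsilon' h),(x+1;h1)$. For $r\ne 4$ these alternating $4$-cycles should be the only $4$-cycles, or more generally the only short cycles of this alternating shape, so they are automorphism-invariant. One then checks that they determine the orientation up to global reversal.

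\emph{The case $r=4$.} Here the cycle $C_4$ produces extra $4$-cycles, and this step is the main obstacle. For $s\le3$ I would not attempt a conceptual argument. Instead I would identify the graphs directly: $\mathrm{C}(4,1)\cong\mathbf K_{4,4}$, whose automorphism group has order $2\cdot 24^2=1152=9\cdot128=9|H|$. For $s=2,3$ I would compute the indices $3$ and $2$ by explicit computation, either by a computer check or by exhibiting the exceptional automorphisms. In practice I would simply cite \cite[Theorem~2.13]{PraegerXu1989} and \cite[Theorem~2.8]{Praeger1989HAT} for these cases, as the statement indicates.
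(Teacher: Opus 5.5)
You take a different route from the paper, which gives no argument for this lemma and simply quotes Praeger--Xu. You instead prove the directed statement and the $r\neq4$ undirected statement intrinsically, and defer only $r=4$ to computation or citation. The directed part is sound in outline. With $j=s$, the vertices reachable from $(x;k)$ in exactly $s$ steps form the whole layer $x+s$, so layers are invariant. Your ``compatibility'' step needs to be made precise, though. One way is to note that $\vec{\mathrm C}(r,s)$ is the line digraph of $\vec{\mathrm C}(r,s-1)$ and induct on $s$. Another is to note that an automorphism fixing every layer preserves each relation ``differ exactly in coordinate $j$'', and so acts on every layer as a translation of $\mathbb{Z}_2^s$.

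\textbf{The gap: $4$-cycles when $s=1$.} The step that fails is the claim that, for $r\neq4$, the alternating $4$-cycles are the only $4$-cycles. For $s=1$ this is false. In $\mathrm C(r,1)$ the vertices $(x,0),(x+1,0),(x+2,0),(x+1,1)$ form a $4$-cycle made of two directed $2$-paths with the same ends. Your fallback, ``the only short cycles of this alternating shape'', is circular, so you have not shown the alternating $4$-cycles are invariant in this case.

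Handle $s=1$ separately. $\mathrm C(r,1)$ is the lexicographic product of an $r$-cycle with the edgeless graph on two vertices. For $r\ge3$, $r\neq4$, two vertices have equal neighbourhoods if and only if they lie in the same layer, because $\{x-1,x+1\}=\{y-1,y+1\}$ with $x\neq y$ forces $r\mid 4$. This gives $\Aut(\mathrm C(r,1))=C_2\wr D_{2r}=H$ at once.

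\textbf{The case $s\ge2$.} Your claim is true here, but it needs an actual argument:
\begin{enumerate}
\item Every edge joins consecutive layers and, since $r\ge3$, is oriented from layer $x$ to layer $x+1$.
\item A $4$-cycle has net layer displacement in $\{0,\pm2,\pm4\}$. Only $0$ is $\equiv 0\pmod r$ when $r\ge3$, $r\neq4$.
\item So the step pattern is $(+,-,+,-)$ or $(+,+,-,-)$.
\item The pattern $(+,+,-,-)$ needs two distinct directed $2$-paths $(x;\epsilon_0h)\to(x+1;ha)\to(x+2;\epsilon_2\cdots\epsilon_{s-1}ab)$ with a common endpoint. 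This is impossible, since the endpoint determines $a$ when $s\ge2$.
\end{enumerate}
With these two repairs, your connectivity argument that the alternating $4$-cycles fix the orientation up to global reversal goes through. Your treatment of $r=4$ is also correct, including $|\Aut({\bf K}_{4,4})|=1152=9|H|$.
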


\cref{bloodyhell} implies that $\C(r,s)$ is $2$-arc-transitive if and only if
$r=4$ and $s\in \{1,2\}$.
Let $\alpha$ be a vertex of $\vC(r,s)$ which as an $(s-1)$-arc of $\vC(r,1)$ starts in $(x,0)$ or $(x,1)$ for some $x\in \ZZ_r$.
Observe that
\[\Aut(\vC(r,s))_\alpha = \langle \tau_i \mid i\in \ZZ_r\setminus \{x,x+1, \ldots,x+s-1\}\rangle ,\]
showing that
\begin{equation}
\label{eq:KH+}
\begin{split}
   K &= \langle \Aut(\vC(r,s))_\alpha \mid \alpha \in \V\vC(r,s) \rangle
   \\&= \langle (H^+)_\alpha \mid \alpha \in \V\vC(r,s) \rangle 
   \\&= \langle K_\alpha \mid \alpha \in \V\vC(r,s) \rangle.   
\end{split}
\end{equation}

\begin{lemma} \label{lem:rs}
The  automorphism group of $\C(r,s)$ with $r\ge 3$ and $1\le s \le r-1$ has base size at most $2$ if and only if $2s> r$. Moreover, $K$ has base size greater than $2$ if and only if $2s< r$.
\end{lemma}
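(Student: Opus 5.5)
The plan is to reduce everything to the combinatorics of ``windows'' in $\ZZ_r$. For a vertex $\alpha$ of $\C(r,s)$ whose $(s-1)$-arc starts at position $x$, write $W_\alpha=\{x,x+1,\ldots,x+s-1\}\subseteq\ZZ_r$.

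\emph{Step 1: the group $K$.} By the description before \eqref{eq:KH+}, we have $K_\alpha=\langle \tau_i \mid i\notin W_\alpha\rangle$. Since the $\tau_i$ are independent generators of $K\cong C_2^r$, this gives
\[
K_\alpha\cap K_\beta=\langle \tau_i \mid i\notin W_\alpha\cup W_\beta\rangle .
\]
Hence $\{\alpha,\beta\}$ is a base for $K$ if and only if $W_\alpha\cup W_\beta=\ZZ_r$. Two windows of length $s$ can cover $\ZZ_r$ exactly when $2s\ge r$, and every window occurs for some vertex. This proves that $K$ has base size greater than $2$ if and only if $2s<r$.

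\emph{Step 2: the case $2s<r$ for $\Aut(\C(r,s))$.} Since $K\leqslant \Aut(\C(r,s))$, every pair $\alpha,\beta$ has nontrivial pointwise stabiliser in $K$, and hence in $\Aut(\C(r,s))$. So the base size exceeds $2$. This argument works for every $r$, including $r=4$.

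\emph{Step 3: the case $2s\ge r$ with $r\ne 4$.} By \cref{bloodyhell}, $\Aut(\C(r,s))=H$. Orders give $|H_\alpha|=2r2^r/(r2^s)=2|K_\alpha|$, so $H_\alpha$ contains an element outside $K$. Its image in $H/K\cong D_{2r}$ must preserve $W_\alpha$ as a set, because the window is determined by the vertex. An element preserving a set of $s<r$ consecutive positions is either trivial or the unique reflection $\rho_\alpha$ reversing $W_\alpha$. Now take windows with $W_\alpha\cup W_\beta=\ZZ_r$, for instance $W_\beta=W_\alpha+s$. Then $H_\alpha\cap H_\beta\cap K=1$, so $|H_\alpha\cap H_\beta|\le 2$, and any nontrivial element would induce $\rho_\alpha=\rho_\beta$.

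Reflections centred at $c$ and at $c+s$ coincide modulo $r$ only if $2s\equiv 0\pmod r$.
\begin{itemize}
\item If $r<2s<2r$, this congruence is impossible, so $\{\alpha,\beta\}$ is a base.
\item If $2s=r$, every covering pair of windows is complementary, so $\rho_\alpha=\rho_\beta$ for every such pair. I would then exhibit a nontrivial element of $H_\alpha\cap H_\beta$. Take $g\in H_\alpha\setminus K$. It fixes $W_\beta=\ZZ_r\setminus W_\alpha$ setwise, so $\beta^g$ has the same window as $\beta$. The group $K_\alpha=\langle\tau_i\mid i\in W_\beta\rangle$ is transitive on the bit-strings on $W_\beta$, so some $k\in K_\alpha$ satisfies $\beta^{gk}=\beta$. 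Then $1\ne gk\in H_\alpha\cap H_\beta$.
\end{itemize}
So for $2s=r$ the base size is greater than $2$, as the statement requires. Some care is needed to check that the reflection really maps vertices with window $W_\beta$ to vertices with window $W_\beta$ once arcs are reversed; I expect this to be a routine verification with $\sigma$.

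\emph{Step 4: $r=4$.} The cases $s=1,2$ satisfy $2s\le r$ and are settled by Steps 2 and 3. For $2s=r=4$, the base size of the overgroup $\Aut(\C(4,2))\geqslant H$ is at least that of $H$, which is greater than $2$. The only remaining case is $\C(4,3)$, with $32$ vertices and $|\Aut:H|=2$. I would handle it by a direct (computer) check that some pair of vertices has trivial stabiliser.

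I expect the main obstacle to be the bookkeeping in Step 3: identifying precisely which element of $H_\alpha\setminus K$ occurs and how it acts on vertices with a given window. The exceptional case $r=4$, $s=3$ is also delicate, since there the full automorphism group is not $H$.
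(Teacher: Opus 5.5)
Your proof is correct and follows the same structure as the paper's proof: the window/$\tau_y$ argument for $K$ and for $2s<r$, the pair of windows $W_\alpha$ and $W_\alpha+s$ when $2s>r$, a nontrivial element of $H_\alpha\cap H_\beta$ for complementary windows when $2s=r$, and \cref{bloodyhell} together with a computer check for $\C(4,3)$. The only difference is in presentation. The paper works with the explicit generators $H_\alpha=\langle\sigma\rho^{s-1},\tau_s,\ldots,\tau_{r-1}\rangle$ and writes down the element $\sigma\rho^{s-1}\tau_s^{\varepsilon_0}\cdots\tau_{r-1}^{\varepsilon_{s-1}}$. You instead use the action of $H/K\cong D_{2r}$ on windows and correct an arbitrary element of $H_\alpha\setminus K$ by an element of $K_\alpha$, which packages the same computation slightly more cleanly.
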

\begin{proof}
We start by considering the groups $H$ and $K$. 

Assume first $2s>r$.
Denote by ${\bf 0}$ the all-zero string in $\ZZ_2^s$.
Consider the vertices $\alpha=(0;{\bf 0})$ and $\beta=(s; {\bf 0})$.
We choose $g\in H_\alpha$ with $\beta^g=\beta$, and we aim to prove that $g$ is the identity.
We have
\[H_\alpha=\langle \sigma\rho^{s-1},\tau_{s},\ldots,\tau_{r-1}\rangle.\]
Thus
$g=(\sigma\rho^{s-1})^\varepsilon \tau_s^{\varepsilon_s}\cdots \tau_{r-1}^{\varepsilon_{r-1}}$ for some $\varepsilon,\varepsilon_s,\ldots,\varepsilon_{r-1} \in \{0,1\}$.
On one hand, if $\varepsilon=0$, then $\beta^g=\beta$ implies $(x,0)^{\tau_{x}^{\varepsilon_{x}}}=(s+x,0)$ for every $x\in \{s,\ldots,r-1\}$.
In particular, $\varepsilon_x=0$ and hence $g$ is the identity.
On the other hand, if $\varepsilon=1$, then $g$ maps $(s,0)$ to an element of the form $(r-1,j)$, which cannot be the tail of $\beta$ because $2s-1\not\equiv r-1\pmod r$.
Therefore, $g$ does not fix $\beta$.
In particular, $\{\alpha,\beta\}$ is a base for the action of $H$ on $\V\C(r,s)$ and hence it is also a base for the action of $K$.

Assume now $2s\le r$. Let $\alpha$ be as above, and let $\beta$ be an arbitrary vertex of $\C(r,s)$, that is, $\beta=(t;j)$ for some $t\in\ZZ_r$ and for some string $j=j_0\dots j_{s-1}\in\ZZ_2^s$.
Suppose $\{0,\ldots,s-1,t,t+1,\ldots,t+s-1\}$ is a proper subset of $\ZZ_r$.
Then there exists $y\in\ZZ_r \setminus \{0,\ldots,s-1,t,t+1,\ldots,t+s-1\}$.
Since $\tau_y$ fixes both $\alpha$ and $\beta$, $H_{\alpha}\cap H_\beta\ne 1$.
As $\tau_y \in K$, we also have $K_\alpha \cap K_\beta \ne 1$.
Therefore, we may suppose that $\ZZ_r=\{0,\ldots,s-1,t,t+1,\ldots,t+s-1\}$.
In particular, $2s\ge r$ and hence $r=2s$, because we are assuming $2s\le r$.
As $\ZZ_r=\{0,\ldots,s-1,t,t+1,\ldots,t+s-1\}$, we have $t=s$. For every $x\in \{0,\ldots,s-1\}$, let $\varepsilon_x=0$ if $j_x=j_{s-x-1}$ and $\varepsilon_x=1$ otherwise.
Consider $g=\sigma \rho^{s-1}\tau_{s}^{\varepsilon_0}\cdots\tau_{r-1}^{\varepsilon_{s-1}}$.
Observe that $g\in H_\alpha$.
For every $x\in \{0,\ldots,s-1\}$,
\begin{align*}
(s+x,j_x)^{g}&=(-s-x,j_x)^{\rho^{s-1}\tau_{s}^{\varepsilon_0}\cdots\tau_{r-1}^{\varepsilon_{s-1}}}
\\&=(-x-1,j_x)^{\rho^{(s-1)}\tau_{s}^{\varepsilon_0}\cdots\tau_{r-1}^{\varepsilon_{s-1}}}
\\&=(s+(s-x-1),j_x)^{\tau_{r-x-1}^{\varepsilon_{s-x-1}}}
\\&=(s+s-x-1,j_{s-x-1}).
\end{align*}
Thus $g$ fixes also $\beta$, and hence $H_\alpha\cap H_\beta\ne 1$.
In particular, the base size for the action of $H$ on $\V\C(r,s)$ is at least $3$.
On the other side, we have $K_\alpha\cap K_\beta=1$ and hence, when $r=2s$, $K$ has base size $2$.

Summing up, $H$ has base size $2$ if and only if $2s>r$, and $K$ has base size $2$ if and only if $2s\ge r$.
Now the result follows from \cref{bloodyhell}, except when $(r,s)=(4,3)$. 
When $(r,s)=(4,3)$, it can be verified with a computer that $\Aut(\C(4,3))$ has base size $2$.
\end{proof}

The following is a crucial property of the Praeger--Xu graphs.

\begin{lemma}{\rm (see~\cite[Theorem~1]{PraegerXu1989} and~\cite[Theorem 2.9]{Praeger1989HAT})} \label{cor:3.4}
Let $\Gamma$ be a connected $4$-valent graph
and let $G$ be an edge- and vertex-transitive group of automorphisms of $\Gamma$.
If $G$ has an abelian normal subgroup which is not semiregular on $V\Gamma$,
then $\Gamma\cong \C(r,s)$ with $r\ge 3$ and $1\le s\le r-1$. 
\end{lemma}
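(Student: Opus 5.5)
This lemma is quoted from the literature (Praeger--Xu, and Praeger's work on half-arc-transitive graphs). If I were to reprove it, I would take an abelian normal subgroup $N\trianglelefteq G$ that is not semiregular and replace it by a characteristic subgroup on which the local action is controlled. Since $G$ is edge-transitive on a $4$-valent graph, $G_\alpha^{\Gamma(\alpha)}$ has at most two orbits of equal size. Connectivity then forces $G_\alpha$ to be a $\{2,3\}$-group. The first step is to show that $N_\alpha\neq 1$ acts nontrivially on $\Gamma(\alpha)$. If $N_\alpha$ fixed $\Gamma(\alpha)$ pointwise, then, because $N$ is abelian, $N_\alpha=N_\beta$ for every $\beta$ in the same $N$-orbit. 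Combined with normality and connectivity, this would give $N_\alpha=1$. So some element of $N_\alpha$ moves neighbours of $\alpha$, and the $N$-orbits meet $\Gamma(\alpha)$ in blocks of size $2$.

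Next I would form the normal quotient $\Gamma/N$ of \cref{sec:normalQuot}. Because $N_\alpha$ pairs up neighbours, each vertex of $\Gamma/N$ has valency $2$. Since $\Gamma/N$ is connected, it is a cycle $C_r$. Each $N$-orbit has to be an independent set of size $2^k$ lying between two consecutive orbits. Abelianness of $N$, together with the way $N_\alpha$ swaps pairs, forces the orbits to have size $2^s$, so $|V\Gamma|=r2^s$. I would then reduce to the elementary abelian case by passing to $\Omega_1$ of the Sylow $2$-subgroup of $N$.

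Having established the cycle-of-blocks structure, I would orient $\Gamma$ along the cycle to get a digraph of in- and out-valency $2$. The key step is to identify each vertex with the sequence of choices it determines in a ``thin'' quotient $\vC(r,1)$. I would build this by mapping a vertex to the $(s-1)$-arc of $\vC(r,1)$ traced by the local $2$-element actions. Injectivity follows from the pointwise stabilisers in $N$, and adjacency matches the definition of $\vC(r,s)$. The bounds $1\le s\le r-1$ come from counting: $s\ge r$ would make $N_\alpha$ trivial or make the graph a wreath $C_r[2K_1]$-type degenerate case.

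I expect the main obstacle to be this identification step. It needs a careful induction along the cycle showing that the kernel of $N$ on consecutive blocks shrinks by exactly a factor of $2$ at each step. That is precisely what produces the shift structure of $\vC(r,s)$.
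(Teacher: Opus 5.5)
The paper gives no proof of this lemma: it is quoted from Praeger--Xu (Theorem~1, the arc-transitive case) and Praeger (Theorem~2.9, applied to $\vGa^{(G)}$ in the half-arc-transitive case). Your reconstruction has genuine gaps at exactly the points where the cited work does its job.

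First, the claim that the $N$-orbits meet $\Gamma(\alpha)$ in pairs, so that $\Gamma/N$ is a cycle, does not follow and is false in general. Take $\Gamma=\mathbf{K}_{4,4}\cong\C(4,1)$ and $G=\Aut(\Gamma)=\Sym(4)\wr\Sym(2)$. Here $N=V_4\times V_4$ is the only nontrivial abelian normal subgroup, so no choice of characteristic subgroup can avoid it. For this $N$, the stabiliser $N_\alpha=1\times V_4$ is regular on $\Gamma(\alpha)$ and $\Gamma/N\cong\mathbf{K}_2$. Even when $N_\alpha$ does have two orbits of length $2$ on $\Gamma(\alpha)$, both can lie in a single $N$-orbit. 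In $\C(2k,1)$ with $k\ge 2$, set $\tau_{\mathrm{ev}}=\prod_{i\ \mathrm{even}}\tau_i$ and $\tau_{\mathrm{od}}=\prod_{i\ \mathrm{odd}}\tau_i$. The abelian group $N=\langle\tau_{\mathrm{ev}},\tau_{\mathrm{od}},\rho^2\rangle$ is normal in the vertex- and edge-transitive group $N\langle\rho,\sigma\rangle$ and is not semiregular, yet it has exactly two orbits. So a valency-$1$ quotient has to be treated separately.

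Second, and more seriously, even when $\Gamma/N$ is a cycle of length $r$ with blocks of size $2^s$, you cannot conclude $\Gamma\cong\C(r,s)$. In $\C(8,1)$, let $L_i=\{i\}\times\ZZ_2$ and $N=\langle\tau_{\mathrm{ev}},\tau_{\mathrm{od}},\rho^4\rangle\cong C_2^3$. Again $N\unlhd N\langle\rho,\sigma\rangle$, which is vertex- and edge-transitive, and $N$ is not semiregular. Its orbits are the sets $L_i\cup L_{i+4}$, so $\Gamma/N$ is a $4$-cycle with blocks of size $2^2$. Nevertheless $\C(8,1)\not\cong\C(4,2)$: the former has distinct vertices with equal neighbourhoods, and the latter does not. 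This $N$ is already elementary abelian, so your $\Omega_1$ reduction changes nothing. Your planned identification of vertices with $1$-arcs of $\vC(4,1)$ would therefore be an attempt to prove a false isomorphism.

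What is missing is a canonical choice of normal subgroup whose orbits are the genuine layers. A natural candidate is $\langle N_\beta \mid \beta\in V\Gamma\rangle$, together with an argument that a $\mathbf{K}_2$ quotient forces $\mathbf{K}_{4,4}$. You would then need an actual proof that the bipartite pieces between consecutive blocks are glued together as in $\vC(r,s)$. That is the substance of the Praeger--Xu theorem, and your sketch does not supply it. Incidentally, $C_r[2K_1]$ is $\C(r,1)$ itself, not a degenerate case with $s\ge r$.
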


\subsection{Covers of Praeger--Xu graphs}

The following is the main result of this section.

\begin{proposition} \label{prop:vCcov}
Let $\Gamma$ be a connected $4$-valent graph,
let $G\leqslant \Aut(\Gamma)$ be vertex- and edge-transitive with base size greater than $2$.
Let $N$ be a minimal normal $2$-subgroup of $G$ such that the quotient
$\Gamma/N$ is isomorphic to $\C(r,s)$ for some $r$ and $s$.
Then $\Gamma$ is isomorphic to $\C(r',s')$ for some $r'$ and $s'$ with $1\le s'\le r'-1$ and $2s'\le r'$.
\end{proposition}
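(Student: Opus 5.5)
The plan is to show that $G$ contains an abelian normal subgroup that is not semiregular on $\V\Gamma$. Then \cref{cor:3.4} gives $\Gamma\cong\C(r',s')$ with $1\le s'\le r'-1$, and the inequality $2s'\le r'$ comes out of the base-size hypothesis through \cref{lem:rs}.

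\emph{Step 1: reduce to $N$ non-semiregular.} Suppose $N$ is not semiregular. Since $N$ is minimal normal and a $2$-group, it is elementary abelian. So $N$ is itself an abelian normal subgroup that is not semiregular, and \cref{cor:3.4} applies at once.

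Suppose instead that $N$ is semiregular. Then $\Gamma\to\Gamma/N$ is a regular $N$-cover: the quotient is still $4$-valent, and $(G/N)_{\alpha^N}\cong G_\alpha$. Hence $G/N$ acts faithfully, vertex- and edge-transitively on $\Gamma/N\cong\C(r,s)$, as a subgroup of $\Aut(\C(r,s))$. I will assume $r\ne4$, or handle $r=4$ by a small computation; then $G/N\leqslant H=K\rtimes\langle\rho,\sigma\rangle$.

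\emph{Step 2: produce the non-semiregular abelian normal subgroup.} Let $\bar L=(G/N)\cap K$. By \eqref{eq:KH+}, $\bar L$ contains the vertex stabilisers $(G/N)_{\alpha^N}$, because those stabilisers lie in $K$. Also, $G/N$ normalises $K$, since $K$ is normal in $H$. Let $L$ be the preimage of $\bar L$ in $G$. Then $L$ is a normal $2$-subgroup of $G$ containing every $G_\alpha$, so $L$ is not semiregular. The group $L$ is a central-by-elementary-abelian extension with kernel $N$, but it need not be abelian. My first attempt is to take $Z={\bf Z}(L)$, or a suitable characteristic subgroup, and show it is not semiregular. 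This uses the very restricted shape of $G_\alpha$ from \cref{lem:Galpha}: class at most $2$ and exponent at most $4$, while its image in $K$ is elementary abelian. The goal is to show that the elements $\tau_i$ lift to commuting elements, so that the commutator map $\bar L\times\bar L\to N$ is trivial on the stabilisers. If it is trivial, the subgroup generated by the lifts of the stabilisers is abelian and normal, and \cref{cor:3.4} applies.

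If the commutator map is not trivial, I expect it to force $G_\alpha\cap G_\beta\ne1$ to fail for suitable vertices, contradicting base size greater than $2$. The idea is that the lifted stabilisers of two far-apart vertices then intersect trivially, mirroring the computation in the proof of \cref{lem:rs} where vertices $(0;{\bf 0})$ and $(s;{\bf 0})$ were used. This commutator analysis is the main obstacle. It needs a careful $p$-group argument, possibly passing to a minimal counterexample and using the $\langle\rho\rangle$-module structure of $N$.

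\emph{Step 3: conclude.} Once $\Gamma\cong\C(r',s')$ with $1\le s'\le r'-1$, the base size of $G$ is greater than $2$ by hypothesis. If $2s'>r'$, the proof of \cref{lem:rs} shows that $H$, and hence every vertex- and edge-transitive subgroup, has base size $2$. For $r'=4$ the cases are finite and are checked directly. Therefore $2s'\le r'$.
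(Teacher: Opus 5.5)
There is a genuine gap, and it is the step you yourself call ``the main obstacle''. Your dichotomy is the right one. Put $E=\langle G_\beta\mid\beta\in\V\Gamma\rangle$. By minimality of $N$, either $E\cap N=1$, in which case $E$ is abelian and not semiregular and \cref{cor:3.4} applies, or $N\le E$ with $[E,E]\in\{1,N\}$. But the case $[E,E]=\Phi(E)=N$ is where essentially all of the work lies, and you only state an expectation. The computation in \cref{lem:rs} lives in $H$, where $G/N$ may well have base size greater than $2$ (for instance when $2s<r$), so it cannot be lifted; the base has to come from the extension itself.

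The lever is the opposite of your ``first attempt''. Assume $\Gamma$ is not a Praeger--Xu graph. Then \cref{cor:3.4} makes every abelian normal subgroup of $G$, in particular ${\bf Z}(E)$, semiregular, and one exploits this through the arc-stabiliser chain of the digraph $\vGa^{(G)}$. Let $t$ be maximal such that $G$ is $t$-arc-transitive on $\vGa^{(G)}$, and let $\alpha_i=\alpha^{y^{-i}}$ run along a directed path. Take $x_0$ to be the nontrivial element of $G_{(\alpha_0,\dots,\alpha_{t-1})}$, and set $x_i=x_0^{y^i}$ and $E_i=\langle x_0,\dots,x_{i-1}\rangle$. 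Then $G_\alpha=E_t$, $E=E_{t+e}$, and, since $N=\Phi(E)$, $|E/N|=2^{t+e}$. As $E_t$ and $E_t^{y^{t-1}}$ are abelian, $x_{t-1}$ is central in $E_{2t-1}$. Semiregularity of ${\bf Z}(E)$ therefore forces $E_{2t-1}<E$, that is, $e\ge t$. A nontrivial $g\in G_\alpha\cap G_{\alpha^{y^t}}=E_t\cap\langle x_t,\dots,x_{2t-1}\rangle$ would give $x_{t'}\in E_{t'}$ for some $t\le t'\le 2t-1$, hence $e\le t-1$. So $\{\alpha,\alpha^{y^t}\}$ is a base. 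Nothing playing this role appears in your proposal. (Also, \cref{lem:Galpha} concerns cubic graphs and does not apply to the $4$-valent $\Gamma$. What you actually have here is $G_\alpha\cong(G/N)_{\alpha^N}\le K_{\alpha^N}$, which is elementary abelian.)

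Second, your claim that $(G/N)\cap K$ contains the vertex stabilisers is false in general. Equation (\ref{eq:KH+}) concerns $H^+=\Aut(\vC(r,s))$, whereas the stabiliser in $H$ contains the orientation-reversing element $\sigma\rho^{s-1}\notin K$. Since $G$ is only assumed vertex- and edge-transitive, it may be arc-transitive on $\Gamma$ (e.g.\ $G/N=H$). Then $G_\alpha N/N\not\le K$, so your $L$ does not contain $G_\alpha$, and Step~2 does not address this case at all. It needs its own argument:
\begin{enumerate}
\item Pass to the index-$2$ subgroup $G^+$ with $G^+/N\le H^+$. The analysis above applies to $G^+$, and it also yields $r\ge t+e$, again via $N=\Phi(E)$.
\item An orientation-reversing element of $G_\alpha$ can fix vertices only in $\alpha^E$ and, for $r$ even, in the antipodal block of the $r$-cycle $\Gamma/E$. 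Hence $\{\alpha,\alpha^{y^t}\}$ remains a base for $G$ unless $e=t=r/2$.
\item In that residual case you must take a different second vertex, namely $\beta^{x_{2t-1}}$ with $\beta=\alpha^{y^{t-1}}$, and show that its stabiliser meets $G_\alpha$ trivially. This uses $[x_{t-1},x_{2t-1}]\ne1$, which is once more a consequence of ${\bf Z}(E)$ being semiregular.
\end{enumerate}
Your Steps~1 and~3 are fine.
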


The proof of \cref{prop:vCcov} is quite involved and relies on a careful analysis of earlier work on covers of Praeger--Xu graphs. Since the statement resembles that of~\cite[Lemma~2.3]{PotocnikSpiga2021}, we follow that paper closely, in particular in the general strategy of the argument and in the initial setup.
We also make use of ideas developed in~\cite{PotocnikSpigaVerret2015,PotocnikVerret2010},
where certain covers of Praeger--Xu graphs are investigated further.
The two proofs, however, diverge at a later stage.
In particular, the final part of our construction, involving two specially chosen vertices,
is instead inspired by the graphs $\Gamma_t^\pm$ introduced in~\cite{PotocnikSpigaVerret2011}.
The automorphism groups of $\Gamma_t^\pm$ have base size $2$,
which motivated our particular choice of the vertex $\beta'$ below.
In fact, a direct computation in $\Aut(\Gamma_t^\pm)$ shows that, up to symmetry, this choice of $\beta'$ is essentially the only possible one.

\begin{proof}[Proof of \cref{prop:vCcov}]
Observe that once we have shown that $\Gamma \cong \C(r',s')$ for some integers $r'$ and $s'$, the condition $2s' \le r'$ follows from \cref{lem:rs} because the base size of $G$ is greater than $2$.
Thus, we argue by contradiction and assume that $\Gamma$ is not isomorphic to $\C(r',s')$ for any integers $r'$ and $s'$.

Since $\Gamma/N \cong \C(r,s)$, we have $G/N \leqslant \Aut(\C(r,s))$.
Recall that $H \leqslant \Aut(\C(r,s))$, where $H$ is defined in (\ref{eq:H}).
We split the discussion into three cases, depending on whether $G/N \leqslant H^+$,
or $G/N \leqslant H$ but $G/N \nleqslant H^+$, or $G/N \nleqslant H$.

\smallskip
\noindent\textsc{Suppose that $G/N \leqslant H^+$.} 
As $H^+$ preserves the orientation of $\vC(r,s)$, the group $G/N$ is not arc-transitive, and thus neither is $G$.
Therefore, $G$ acts half-arc-transitively on $\Gamma$.
Let $\vGa=\vGa^{(G)}$ be one of the two digraphs induced by the half-arc-transitive action of $G$.
Then $\vGa$ is an arc-transitive digraph of in- and out-valency $2$, and thus,
in view of \cref{cor:3.4}, 
\begin{equation} \label{eq:no}
\hbox{every abelian normal subgroup of } G \hbox{ acts semiregularly on } \V\vGa.
\end{equation}

Note that $\vGa/N$ is isomorphic to $\vC(r,s)$.
By identifying $\vGa/N$ with $\vC(r,s)$, we may consider the automorphisms $\tau_i, \rho \in \Aut(\vGa)$
and the group $K = \langle \tau_i \mid i\in \ZZ_r\rangle \cong \C_2^r$ (as in \cref{sub2}).
Observe that, for every vertex $\alpha \in \V\Gamma$,
$(G/N)_{\alpha^N} \leqslant K_{\alpha^N}$,
and hence $(G/N)_{\alpha^N} = (G/N) \cap K_{\alpha^N}$.
Define
\begin{equation*}
   E = \langle G_\beta \mid \beta \in \V\vGa\rangle,
\end{equation*}
and observe that, by the transitivity of $G$ on $\V\Gamma$,
 $E$ coincides with the normal closure $(G_\alpha)^G$.
Using (\ref{eq:KH+}), we deduce
\begin{equation}
    \label{eq:propE}
\begin{split}
   EN/N &= (G_\alpha N)^G/N
   \\&= (G_\alpha N/N)^{G/N}
   \\&= ((G/N)_{\alpha^N})^{G/N}
   \\&= ((G/N) \cap K_{\alpha^N})^{G/N}
   \\&= (G/N) \cap K.
\end{split}
\end{equation}
By minimality of $N$, it follows that either $N \leqslant E$ or $N \cap E = 1$.
If the latter holds, then $E \cong EN/N \leqslant K$. In particular, $E$ is an abelian normal subgroup of $G$ that does not act semiregularly on $\V\vGa$, contradicting (\ref{eq:no}).

Thus, we must assume that $N \leqslant E$.
Then $E/N = EN/N = (G/N) \cap K$.
Hence, $E/N$ is an elementary abelian $2$-group, implying that $E$ is a $2$-group.
Moreover, since
\[
E_\alpha = E_\alpha /(N\cap E_\alpha) \cong (E_\alpha N/N) = (E/N)_{\alpha^N} \leqslant K_{\alpha^N},
\]
we obtain that
\begin{equation}
\label{eq:Euab}
E_\alpha \hbox{ is an elementary abelian } 2\hbox{-group, for every } \alpha\in \V\Gamma.
\end{equation}

Being characteristic in $E$, both $[E,E]$ and $\Phi(E)$ are normal in $G$.
Moreover $[E,E] \leqslant \Phi(E) \leqslant N$,
and by the minimality of $N$, we have either $[E,E] = 1$ or $[E,E] =N$.
In the first case $E$ is abelian, which contradicts (\ref{eq:no}).
Therefore,
\begin{equation*}
\label{eq:EE}
   [E,E] = \Phi(E) = N.
\end{equation*}

Moreover, since $E$ and $N$ are $2$-groups,
the action of $E$ on $N\setminus\{1\}$ by conjugation must have at least one fixed point,
implying that $N$ intersects the center $\Z E$ non-trivially.
The minimality of $N$ then implies that $N \leqslant \Z E$. 
If $\Z E_\alpha \ne 1$, then $\Z E$ is a normal abelian $2$-subgroup of $G$ that does not act semiregularly on $\V\Gamma$, which contradicts (\ref{eq:no}), showing that 
\begin{equation}
    \label{eq:Z}\Z E\hbox{ acts semiregularly on }\V\Gamma.
\end{equation}

We will now set up a standard notation typically used when studying 
the structure of a vertex-stabiliser $G_\alpha$ in an arc-transitive digraph of out-valence $2$
(see, for example,~\cite[Section~2.3]{PotocnikVerret2010}).
Let $t$ be the largest integer such that $G$ acts transitively on the $t$-arcs of $\vGa$.
Note that $G$ must act regularly on the set of all $t$-arcs of $\vGa$ and that, for every $\alpha\in  \V\Gamma$,
$t$ is the largest integer such that $G_\alpha$ acts transitively on the $t$-arcs starting at $\alpha$.
Let $y$ be an element of $G$ such that $(\alpha^y,\alpha)$ is an arc of $\vGa$ and let 
\begin{equation*}
\label{eq:vi}
   \alpha_i = \alpha^{y^{-i}} \hbox{ for } i\in \ZZ.
\end{equation*}
Note that, for every $i\ge 0$, the $(i+1)$-tuple $(\alpha_0,\alpha_1, \ldots, \alpha_i)$ is an $i$-arc of $\vGa$.

Since $E = (G_\alpha)^G$, we have $E_\alpha = G_\alpha$ for every $\alpha \in \V\Gamma$.
Moreover, by the half-arc-transitivity of $G$, we have $\vGa/E \cong \vC(r,s)/K \cong \vec{\C}_r$.
Therefore, $Ey \in G/E$ acts as a one-step rotation of $\vGa/E$, implying that $G = E\langle y\rangle$.
Consider the stabiliser
$G_{(\alpha_0, \ldots, \alpha_i)}$ of the $i$-arc $(\alpha_0,\ldots,\alpha_i)$ in $G$.
Since $\alpha_0 = \alpha$ and $G_\alpha = E_\alpha$, we see that
$G_{(\alpha_0, \ldots, \alpha_i)} = E_{(\alpha_0, \ldots, \alpha_i)}$.
By the definition of $t$, it follows that $G_{(\alpha_0, \ldots, \alpha_{t})}$ is trivial
and that $G_{(\alpha_0, \ldots, \alpha_{t-1})}$ is cyclic of order $2$.
Let $x_0$ be its unique nonidentity element, that is,
the automorphism in $G$ that fixes the $(t-1)$-arc $(\alpha_0,\alpha_1, \ldots, \alpha_{t-1})$
but moves the vertex $\alpha_t$.
For $i\ge 1$, define
\begin{equation*}
\label{eq:xi}
x_i = x_0^{y^{i}} \quad \hbox{ and } \quad E_i = \langle x_0, \ldots, x_{i-1} \rangle, \quad E_0 = 1.
\end{equation*}
Following~\cite[Section~2.3]{PotocnikVerret2010}, we have
\begin{equation*}
\label{eq:Ei}
\hbox{for every } i\in\{0, \ldots, t\}, \qquad
  E_i = G_{(\alpha_0, \ldots, \alpha_{t-i})} \quad \hbox{ and } \quad |E_i| = 2^i.
\end{equation*}
Moreover, again from~\cite[Section~2.3]{PotocnikVerret2010}, there exists a positive integer $e$ such that
\begin{itemize}
   \item $e$ is the smallest integer such that $E_{t+e} = E_{t+e+1}$;
   \item $e$ is the smallest integer such that $E_{t+e} = E$.
\end{itemize}

Recall that $E/N = (G/N) \cap K$ is an elementary abelian $2$-group. We claim that
\begin{equation}
\label{eq:E/N}
 |E/N| = 2^{t+e} \quad \hbox{ and } \quad E/N = \langle Nx_0, Nx_1, \ldots, Nx_{t+e-1} \rangle.
\end{equation}
Let $e' \in \mathbb{N}$ be minimal with $E_{t+e'}N = E_{t+e'+1}N$.
As $E_{t+e} = E_{t+e+1}$, we have $e' \le e$.
Since $E_i^y = \langle x_1, \ldots, x_i \rangle$ for every $i$,
it follows that
\begin{align*}
        E_{t+e'+2}N & = \langle E_{t+e'+1}N, (E_{t+e'+1}N)^y \rangle \\
        &= \langle E_{t+e'}N, (E_{t+e'}N)^y \rangle \\
        &= E_{t+e'+1}N \\
        &= E_{t+e'}N.
    \end{align*}
Thus, by induction we see that $E = E_{t+e}N = E_{t+e'}N$.
Since $N = \Phi(E)$, the set of non-generators of
$E$, it follows that $E_{t+e'} = E$, and thus $e = e'$. 
In particular, $e$ is the smallest integer such that $E_{t+e}N = E_{t+e+1}N$.
Hence
\begin{equation*}
 N = E_0N < E_1N < E_2N < \cdots < E_{t+e-1}N < E_{t+e}N = E.
\end{equation*}
In particular, $|E| \ge 2^{t+e}|N|$, and thus $|E/N| \ge 2^{t+e}$.
On the other hand, we obtain that
$E/N = E_{t+e}/N = \langle Nx_0, Nx_1, \ldots, Nx_{t+e-1} \rangle$.
Since $E/N$ is elementary abelian, we see that $|E/N| \le 2^{t+e}$.
This proves the claims in (\ref{eq:E/N}).

Recall from (\ref{eq:Z}) that $\Z E$ acts semiregularly on $\V\Gamma$, and that,
as stated in (\ref{eq:Euab}), $E_\alpha = E_t = \langle x_0, \ldots, x_{t-1} \rangle$ is abelian.
It follows that
\[
E_t^{y^{t-1}} = \langle x_{t-1}, \ldots, x_{2t-2} \rangle
\]
is also abelian. Therefore $x_{t-1}$ is central in
\[
\langle E_t, (E_t)^{y^{t-1}} \rangle
= \langle x_0, \ldots, x_{2t-2} \rangle
= E_{2t-1}.
\]
Since $x_{t-1} \in E_\alpha$ and $\Z E \cap E_\alpha = 1$, we get $E_{2t-1} < E = E_{t+e}$, and hence $2t-1 < t+e$, from which it follows that
\begin{equation} \label{eq:mygod1}
e \ge t.
\end{equation} 

Let $\gamma = \alpha^{y^t}$.
We prove that $G_\alpha \cap G_\gamma = 1$: this would imply that $G$ has base size $2$,
contradicting our assumption on $G$.
By our previous discussion, we have
\[
G_\alpha = E_\alpha = E_t \quad \hbox{and} \quad 
G_\gamma =G_{\alpha^{y^t}}= E_t^{y^t} = \langle x_t, \ldots, x_{2t-1} \rangle.
\]
Aiming for a contradiction, assume that $G_\alpha$ and $G_\gamma$ intersect non-trivially. Let $g \in G_\alpha \cap G_\gamma$ be a nontrivial automorphism. Note that $g\in G_\gamma$ and hence
\[
g = x_t^{\varepsilon_t} x_{t+1}^{\varepsilon_{t+1}} \cdots x_{2t-1}^{\varepsilon_{2t-1}} \in E_t,
\]
for some $\varepsilon_t, \ldots, \varepsilon_{2t-1} \in \{0,1\}$.
Since $g$ is nontrivial, there exists $t' \in \{t, \ldots, 2t-1\}$ with $\varepsilon_{t'} = 1$
and with $\varepsilon_{t''} = 0$ for each $t'' > t'$.
This implies
\[
x_{t'} \in \langle E_t, g, x_t, \ldots, x_{t'-1} \rangle
= \langle E_t, x_t, \ldots, x_{t'-1} \rangle
= \langle x_0, \ldots, x_{t'-1} \rangle
= E_{t'}.
\]
Thus,
\[
E_{t+(t'-t)+1} = E_{t'+1} = E_{t'} = E_{t+(t'-t)},
\]
and the minimality of $e$ implies $e \le t' - t$.
As $t' \le 2t-1$, we deduce that $e \le t-1$, which contradicts the inequality in (\ref{eq:mygod1})
and concludes the analysis of the case $G/N \leqslant H^+$.

Before dealing with the case $G/N \nleq H^+$,
we need to make one additional observation in the case $G/N \leqslant H^+$,
which will be crucial later in the argument.
We claim that
\begin{equation}\label{eq:newClaim}
   r \ge t + e.
\end{equation}
We argue by contradiction and suppose that $r < t + e$.
In particular, $0\le t+e-r-1\le r+e-1$ and
\[
E = E_{t+e} = \langle x_0, \ldots, x_{t+e-r-1}, \ldots, x_{t+e-1} \rangle.
\]
Recall that $G/E = \langle Ey \rangle$ is a cyclic group of order $r$.
Hence, on the one hand, $y^r \in E$, and, on the other hand, $x_{t+e-1} = x_{t+e-r-1}^{y^r}$. Hence,
\begin{align*}
E &= E_{t+e} = \langle x_0, \ldots, x_{t+e-2}, x_{t+e-1} \rangle
\\&= \langle x_0, \ldots, x_{t+e-2}, x_{t+e-r-1}^{y^r} \rangle
   = \langle E_{t+e-1}, x_{t+e-r-1}^{y^r} \rangle
\\&= \langle E_{t+e-1}, x_{t+e-r-1}[x_{t+e-r-1}, y^r] \rangle
\\&= \langle E_{t+e-1}, [x_{t+e-r-1}, y^r] \rangle
\\&= \langle E_{t+e-1}, \Phi(E) \rangle
\\&= E_{t+e-1},
\end{align*}
where we use the fact that $\Phi(E)$ is the set of non-generators of $E$ for the last equality.
In particular, the equality $E_{t+e} = E_{t+e-1}$ contradicts the minimality of $e$, thus proving the claim.

\smallskip
\noindent\textsc{Suppose that $G/N \leqslant H$ but $G/N \nleqslant H^+$.}
Let $G^+$ be the subgroup of $G$ of index $2$, with $G^+/N \leqslant H^+$.
The argument above shows that $G^+$ has base size $2$ and, in fact, we may take the vertices $\alpha$ and $\gamma = \alpha^{y^t}$ to witness that $G_\alpha^+ \cap G_\gamma^+ = 1$.
If $G_\alpha \cap G_\gamma = 1$, then we contradict the hypothesis that $G$ has base size greater than $2$.
Therefore, $G_\alpha \cap G_\gamma \ne 1$.
As $|G : G^+| = 2$, we deduce that $|G_\alpha \cap G_\gamma| = 2$.
Using the notation in (\ref{eq:K}) and (\ref{eq:sigma}),
we have $(G_\alpha \cap G_\gamma)N/N = \langle k\sigma \rangle$, for some $k \in K$.
As in the case $G/N \leqslant H^+$, we can define the subgroup $E$ of $G$ with respect to $G^+$.

As $\Gamma/E \cong \C(r,s)/K$ is a cycle of length $r$, and since $\sigma$ acts as a reflection of this cycle,
we deduce that elements of the form $k\sigma$ can fix vertices only in the following cases: 
\begin{itemize}
\item if $r$ is odd, then such elements fix vertices in $\alpha^E$; 
\item if $r$ is even, then they fix vertices in $\alpha^E \cup \alpha'^{E}$,
where $\alpha'^E$ is the vertex opposite (that is, at distance $r/2$) to $\alpha^E$ in the quotient graph $\Gamma/E$. 
\end{itemize}

Since $\gamma = \alpha^{y^t}$, and since $r \ge t+e$ and $e \ge t$ (see (\ref{eq:mygod1}) and (\ref{eq:newClaim})),
it follows that the only possible way for $G_\alpha \cap G_\gamma \ne 1$ to occur is when $e = t = r/2$.
Therefore, we assume this equality in what follows.

As $e = t$ and $\gamma = \alpha^{y^t}$, we have
\[
E_\alpha = \langle x_0, \ldots, x_{t-1} \rangle, \quad
E_\gamma = \langle x_t, \ldots, x_{2t-1} \rangle, \quad
E = \langle E_\alpha, E_\gamma \rangle, \quad
|E : N| = 2^{2t}.
\]
Now let $\beta = \alpha^{y^{t-1}}$.
Since $\Gamma/E$ is a cycle of length $r = 2t$, we deduce that, for every $\beta' \in \beta^E$,
we have $G_\alpha \cap G_{\beta'} \leqslant E$, and hence $G_\alpha \cap G_{\beta'} = E_\alpha \cap E_{\beta'}$.
If $\beta' = \beta^{x_{2t-1}}$, then we claim that $G_\alpha \cap G_{\beta'} = 1$.

Let $n = [x_{t-1}, x_{2t-1}]$. As $E/N$ is abelian, we have $n \in N$.
Suppose that $n = 1$. Then $x_{t-1}$ is centralized by $x_{2t-1}$.
As
$x_{t-1} \in \langle x_0, \ldots, x_{t-1} \rangle = E_\alpha$
and
$x_{t-1} \in \langle x_{t-1}, \ldots, x_{2t-2} \rangle = E_\beta$,
and since $E_\alpha$ and $E_\beta$ are abelian by (\ref{eq:Euab}),
we deduce that $x_{t-1}$ is centralized by $\langle x_0, \ldots, x_{2t-2} \rangle$.
Since $x_{t-1}$ is centralized by $x_{2t-1}$, we conclude that
\[
x_{t-1} \in\Z{\langle x_0,\ldots,x_{2t-1}\rangle}\cap E_\alpha= \Z E \cap E_\alpha = 1,
\]
by (\ref{eq:Z}). This contradiction shows that $n$ is nontrivial.

Observe that
\begin{align*}
E_{\beta'}&=E_{\beta^{x_{2t-1}}} = E_\beta^{x_{2t-1}}
\\& = \langle x_{t-1}^{x_{2t-1}},x_t^{x_{2t-1}},\ldots,x_{2t-2}^{x_{2t-1}}\rangle
\\& = \langle x_{t-1}^{x_{2t-1}},x_t,\ldots,x_{2t-2}\rangle
\\& = \langle x_{t-1}n,x_{t},\ldots,x_{2t-2}\rangle.
\end{align*}
Thus $E_{\beta'}$ equals $E_\beta$ modulo $N$.
Since the intersection of $E_\alpha N$ and $E_\beta N$ is $\langle x_{t-1}\rangle N$, we deduce
\[E_\alpha \cap E_{\beta'} \leqslant \langle x_{t-1}\rangle N. \]
As 
\[x_{t-1}n=x_{t-1}[x_{t-1},x_{2t-1}]=x_{t-1}^{x_{2t-1}}\in E_{\beta'} ,\]
the modular law implies
\[E_{\beta'}\cap \langle x_{t-1}n\rangle N=\langle x_{t-1}n\rangle(E_{\beta'}\cap N)=\langle x_{t-1}n\rangle N_{\beta'}=\langle x_{t-1}n\rangle.\]
In turn, this yields $E_\alpha \cap E_{\beta'} \leqslant \langle x_{t-1}n\rangle$.
If $E_\alpha \cap E_{\beta'}$ contains a nontrivial automorphism, then $x_{t-1}n\in E_\alpha$.
However, since $x_{t-1}\in E_\alpha$, we get $n\in E_\alpha\cap N=1$, and hence $n=1$, which is again a contradiction. 

Summing up, in this case, $G$ cannot have base size larger than $2$.

\smallskip
\noindent\textsc{Suppose that $G/N \nleqslant H$.} 
As $G/N \leqslant \Aut(\Gamma/N) = \Aut(\C(r,s))$, \cref{bloodyhell} implies that $r = 4$.
At this point, the proof becomes entirely computational.
For each subgroup $X$ of $\Aut(\C(4,s))$ with $X \nleqslant H$,
we determine all irreducible $X$-modules $N$ over a field of characteristic $2$.
Then, for each such choice of $X$ and $N$, we compute all possible extensions of $N$ by $X$.
These groups constitute our candidate groups $G$.
For each such group $G$, we determine whether it acts on a connected cubic graph and has base size greater than $2$.
No exceptional cases arise from this analysis.
\end{proof}

\subsection{Split Praeger--Xu graphs} \label{sec:subS}

The family of the {\em split Praeger--Xu graphs},
featured in \cref{thrm:main}, is obtained from the Praeger--Xu graphs via the splitting operation,
which we explained in \cref{sec:splitMerge}.
By \cite[Lemma~2.12]{BarbieriGrazianSpiga2025},
we have $\Aut(\Spl\C(r,s)) = H$, where $H$ is defined in (\ref{eq:H}).
It follows that
$$ \Aut(\Spl\C(r,s)) \cong \Aut(\C(r,s)) \cong C_2^r \rtimes D_{2r} . $$
Moreover, for $r \neq 4$, the stabiliser of a vertex is an elementary abelian $2$-group.

\begin{lemma} \label{bloodyhel}
    The automorphism group of the split Praeger--Xu graph $\Spl\C(r,s)$,
    with $r \ge 3$ and $1 \le s \le r-1$, has base size greater than $2$ if and only if $2s < r$.
\end{lemma}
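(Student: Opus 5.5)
We will transfer the problem from $\Spl\C(r,s)$ to $\C(r,s)$, where \cref{lem:rs} already settles it. Write $\Delta=\C(r,s)$ and $\mathcal C$ for the $H$-invariant $2$-factor used in the splitting. By \cite[Lemma~2.12]{BarbieriGrazianSpiga2025}, $\Aut(\Spl\C(r,s))=H$, so we must decide when $H$, acting on $\V\Spl\C(r,s)$, has base size at most $2$. The vertices of the split are the pairs $(\alpha,\mathbf c)$ with $\alpha\in\V\mathbf c$. Each vertex of $\Delta$ lies on exactly two cycles of $\mathcal C$, so the map $(\alpha,\mathbf c)\mapsto\alpha$ is $H$-equivariant and $2$-to-$1$. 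Since $H$ is not arc-transitive on the split (for $r\neq4$ it is only vertex-transitive, being edge-transitive on $\Delta$ but not arc-transitive on it), the stabiliser $H_{(\alpha,\mathbf c)}$ has index $2$ in $H_\alpha$. More precisely, $H_{(\alpha,\mathbf c)}$ is the subgroup of $H_\alpha$ fixing each of the two cycles through $\alpha$.

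\emph{The case $2s<r$.} Take any two vertices $(\alpha,\mathbf c)$ and $(\beta,\mathbf d)$. Then
\[ H_{(\alpha,\mathbf c)}\cap H_{(\beta,\mathbf d)}\leqslant H_\alpha\cap H_\beta . \]
We want a nontrivial element on the left, and the argument of \cref{lem:rs} supplies one. Write $\alpha=(x;i)$ and $\beta=(x';j)$. When $2s<r$, the windows $\{x,\dots,x+s-1\}$ and $\{x',\dots,x'+s-1\}$ cannot cover $\ZZ_r$. Hence some $\tau_y$ with $y$ outside both windows fixes both $\alpha$ and $\beta$. This $\tau_y$ lies in $K\leqslant H^+$, so it preserves the orientation and maps each out-arc pair to itself. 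We then need to check that $\tau_y$ fixes every cycle of $\mathcal C$ through $\alpha$ and through $\beta$. For this we identify $\mathcal C$ concretely: its cycles are the alternating cycles of $\vC(r,s)$, and each such cycle is determined by the in-/out-neighbour pairs, which $\tau_y$ fixes as soon as it fixes the vertex. If this identification gets delicate, it is enough to note that $K_\alpha$ acts trivially on the two $\mathcal C$-cycles through $\alpha$. The reason is that $K_\alpha$ is an elementary abelian subgroup of $H^+_\alpha$, and $H^+_\alpha$ fixes the out-neighbourhood set $\{(x+1;h0),(x+1;h1)\}$ setwise. Hence $\tau_y$ fixes both split vertices, and the base size exceeds $2$.

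\emph{The case $2s\ge r$.} We lift a base in the opposite direction. Elements of $H$ fixing $(\alpha,\mathbf c)$ fix $\alpha$. For $2s>r$, \cref{lem:rs} (its explicit computation with $\alpha=(0;\mathbf 0)$ and $\beta=(s;\mathbf 0)$) gives $H_\alpha\cap H_\beta=1$, so any lifts of $\alpha$ and $\beta$ form a base. For $2s=r$ the group $H_\alpha\cap H_\beta$ is not trivial: the proof of \cref{lem:rs} shows that $K_\alpha\cap K_\beta=1$ and that the only nontrivial elements are of the form $g=\sigma\rho^{s-1}k$. Such an element reverses orientation, so it swaps the two $\mathcal C$-cycles through $\alpha$ (equivalently, it does not fix the distinguished matching neighbour structure). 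Hence $g\notin H_{(\alpha,\mathbf c)}$, and the lifted pair is still a base. The main obstacle is exactly this step: verifying that the reflection-type elements $\sigma\rho^{s-1}k$ act nontrivially on the set of $\mathcal C$-cycles at $\alpha$, which needs the explicit description of $\mathcal C$.

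Finally, the small cases $r=4$ need separate treatment, because there $\Aut$ could a priori exceed $H$. The equality $\Aut(\Spl\C(r,s))=H$ from \cite[Lemma~2.12]{BarbieriGrazianSpiga2025} avoids this. Even so, for $(r,s)\in\{(4,1),(4,2),(4,3)\}$ we would confirm the conclusion by a direct computer check, as in \cref{lem:rs}.
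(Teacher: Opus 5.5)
Your proposal is correct and follows essentially the paper's route. The paper observes that the split-vertex stabiliser is $H_\alpha\cap K=K_\alpha$ and then applies the part of \cref{lem:rs} about $K$, which covers all three of your cases at once. The step you flag as the main obstacle is in fact automatic: you already show $K_\alpha\leqslant H_{(\alpha,\mathbf c)}$, and both subgroups have index $2$ in $H_\alpha$, so $H_{(\alpha,\mathbf c)}=K_\alpha$. That index $2$ comes from $H$ being transitive on split vertices; $H$ is actually arc-transitive on $\C(r,s)$, so your parenthetical justification is inaccurate, though harmless.
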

\begin{proof}
We use the notation established above.
Observe that $H_{\alpha^+}$ is a subgroup of index $2$ in $H_\alpha$. In fact,
$H_{\alpha^+} = H_\alpha \cap K = K_\alpha$, where $K$ is defined in (\ref{eq:K}). In particular, by \cref{lem:rs}, we deduce that the automorphism group of $\Spl\C(r,s)$ has base size greater than $2$ if and only if $2s < r$.
\end{proof}

We conclude this section with two technical lemmas.
The second one shows the relevance of the split Praeger--Xu graphs
and the ladders described in \cref{def:ladders} while dealing with quotients of cubic graphs.

\begin{lemma}[\cite{BarbieriGrazianSpiga2025}, Lemma~2.3]\label{lem:bigbigbig}
Let $\Gamma$ be a connected cubic graph, $\alpha \in \V\Gamma$,
$G \leqslant \Aut(\Gamma)$ vertex-transitive, and let $N$ be a semiregular normal subgroup of $G$.
Suppose that the action of $G_\alpha$ on $\Gamma(\alpha)$ is cyclic of order $2$,
and that the quotient $\Gamma /N$ is a cycle of length $r \ge 3$.
Let $K$ be the kernel of the action of $G$ on the $N$-orbits on $\V\Gamma$.
Then either
\begin{enumerate}
\item\label{emui1} $G_\alpha$ has order $2$ and $K_\alpha=1$, or
\item\label{emui2} $r$ is even and $G_\alpha=K_\alpha$ is an elementary abelian $2$-group of order at most $2^{r/2}$.
\end{enumerate}
\end{lemma}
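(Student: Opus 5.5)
This lemma is quoted from \cite[Lemma~2.3]{BarbieriGrazianSpiga2025}. If I had to prove it directly, I would compare the action of $G_\alpha$ with the structure of the quotient cycle.

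\emph{Setup.} Since $N$ is semiregular and $\Gamma/N$ is a cycle $\C_r$, every $N$-orbit is joined to its two neighbouring orbits. At each vertex, the three edges split $2+1$ or $1+2$ between the two neighbouring orbits; there are no edges inside an orbit, because the quotient has valency $2$ and no loops. Write $\Gamma(\alpha)=\{\alpha',\beta_1,\beta_2\}$, where $\alpha'$ is the neighbour fixed by $G_\alpha$ (as in the merging construction) and $G_\alpha$ swaps $\beta_1,\beta_2$. The group $G/K$ acts faithfully on $\C_r$, so it is cyclic or dihedral. Also $K_\alpha$ fixes the $N$-orbit of every vertex.

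\emph{Key step.} If $K_\alpha$ is nontrivial, I would show that $\beta_1,\beta_2$ lie in the same $N$-orbit. Indeed, $K$ preserves every $N$-orbit, so a nontrivial element of $K_\alpha$ swapping $\beta_1,\beta_2$ forces them into a common orbit; and some element of $K_\alpha$ must do so, since otherwise $K_\alpha$ fixes $\Gamma(\alpha)$ pointwise and, by connectivity and the same argument propagated, $K_\alpha=1$. Then $\alpha'$ lies in the other neighbouring orbit. This gives an orientation pattern of double and single edges between consecutive orbits, and these patterns must alternate around the cycle. Hence $r$ is even.

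\emph{Concluding case (\ref{emui2}).} The argument now mirrors the Praeger--Xu analysis. The stabiliser $K_\alpha$ acts on the ball around $\alpha$ with each $N$-orbit preserved. I would track the "swap" involutions layer by layer, moving two orbits at a time along the cycle (a double-edge step followed by a single-edge step). Each such pair of steps contributes at most one independent involution, and these involutions commute because they act on disjoint layers modulo $N$; this is the analogue of the $\tau_i$. Going around the cycle therefore yields at most $r/2$ independent generators before returning to $\alpha$'s orbit, where semiregularity of $N$ pins everything down. This gives $|K_\alpha|\le 2^{r/2}$ with $K_\alpha$ elementary abelian.

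It remains to show $G_\alpha=K_\alpha$, i.e.\ that $G_\alpha$ does not reflect the cycle. A reflection fixing the orbit $\alpha^N$ would swap its two neighbouring orbits. However, $\alpha$ sends two edges to one of these orbits and one edge to the other, so no element of $G_\alpha$ can swap them.

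\emph{Concluding case (\ref{emui1}).} If $K_\alpha=1$, then $G_\alpha$ embeds in the stabiliser in $G/K$ of a vertex of the cycle, which has order at most $2$. Since $G_\alpha$ acts as $C_2$ on $\Gamma(\alpha)$, it is nontrivial, so $|G_\alpha|=2$.

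The main obstacle is the elementary abelian bound in case (\ref{emui2}): showing that the swap involutions commute and that there are only $r/2$ of them. This requires careful bookkeeping of how $N$-semiregularity identifies layers when going once around the cycle, much like the proof of (\ref{eq:newClaim}).
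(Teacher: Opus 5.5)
Since the paper only imports this lemma from the cited source, your sketch has to stand on its own. The reductions are fine:
\begin{itemize}
\item splitting according to whether $K_\alpha=1$;
\item if $K_\alpha$ fixed $\Gamma(\alpha)$ pointwise, then $K_\alpha=K_\beta$ for every neighbour $\beta$ (equal orders, as $K\unlhd G$), whence $K_\alpha=1$;
\item the alternation of the $2/1$ edge pattern forces $r$ even;
\item $G_\alpha=K_\alpha$, because $G_\alpha$ cannot interchange the two orbits adjacent to $\alpha^N$;
\item case (1) follows from $G_\alpha\cong G_\alpha K/K\leqslant (G/K)_{\alpha^N}$.
\end{itemize}
Your remark that no edge lies inside an $N$-orbit is false: take a circular ladder with $N$ the central involution swapping the two rims. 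It is harmless, though. In that situation $K_\alpha$ fixes $\Gamma(\alpha)$ pointwise. When $K_\alpha\neq 1$, the swapped pair $\beta_1,\beta_2$ cannot lie in $\alpha^N$, since otherwise $\alpha^N$ would have only one neighbour in $\Gamma/N$.

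The genuine gap is the entire content of (2), which you only treat by analogy.

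\emph{The bound.} Following a forward path $\pi=(\alpha=\gamma_0,\gamma_1,\dots,\gamma_r)$ through the orbits only gives $|K_\alpha:K_{(\pi)}|\le 2^{r/2}$, with one binary choice at each of the $r/2$ even steps. You still need $K_{(\pi)}=1$. But $\gamma_r=\alpha^n$ with $n\in N$ in general nontrivial, and semiregularity does not ``pin everything down'' by itself. What it actually gives is a uniformity principle: if $g\in K$ fixes $u$ and $u^n$ with $n\in N$, then $g$ centralises $n$, so $g$ acts on the neighbourhood of $u^n$ exactly as on that of $u$. This is what makes the bound work. For instance, in the oriented merge ${\rm m}\Gamma$, take $t$ maximal such that the rotation subgroup is transitive on $t$-arcs. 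It is then regular on them, and $|G_\alpha|=2^t$. Suppose $t>r/2$. The involution fixing a $(t-1)$-arc $(v_0,\dots,v_{t-1})$ swaps the in-neighbours of $v_0$ and centralises the $n$ with $v_{r/2}=v_0^n$. Hence it swaps the in-neighbours of $v_{r/2}$, while fixing $v_{r/2-1}$, which is a contradiction.

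\emph{Elementary abelian.} More seriously, nothing in your sketch shows that $K_\alpha$ is elementary abelian. A chain of index-$2$ stabilisers yields generators but says nothing about commutativity or exponent. Moreover, elements of $K_\alpha$ act on every block simultaneously, so ``acting on disjoint layers modulo $N$'' is not a meaningful property. Even in $\C(r,s)$ the $\tau_i$ do not have disjoint supports on vertices, since each vertex involves $s$ consecutive coordinates. They commute because $K\cong C_2^r$ by construction, and for a general $\Gamma$ with $N$ possibly nonabelian there is no such a priori structure.

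Since $K_\alpha\cong K/N$, what must be proved is $\Phi(K)\leqslant N$. One route is to apply the uniformity principle inductively to commutators of the standard involutory generators of $G_\alpha$. Another is to appeal to the structure theory for the kernel in $4$-valent arc-transitive graphs with cyclic normal quotient, applied to ${\rm m}\Gamma$, with small $r$ handled separately. This is exactly the nontrivial input the cited lemma supplies, and the analogy with the $\tau_i$ cannot replace it.
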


\begin{lemma}[\cite{BarbieriGrazianSpiga2025}, Lemma~2.14] \label{lem:GNc}
Let $\Gamma$ be a connected vertex-transitive cubic graph, $\alpha\in \V\Gamma$,
$G \leqslant \Aut(\Gamma)$ vertex-transitive, and let $N$ be a minimal normal subgroup of $G$.
Suppose that the action of $G_\alpha$ on $\Gamma(\alpha)$ is cyclic of order $2$,
that $N$ is a $2$-group and $\Gamma/N$ is a cycle of length at least $3$.
Then $\Gamma$ is isomorphic to a circular ladder, or to a M\"{o}bius ladder,
or to $\Spl\C(r,s)$ for some $r\ge 3$ and $s\le r-1$.
\end{lemma}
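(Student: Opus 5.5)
The plan is to pass to the merged $4$-valent graph, where the structure is controlled by \cref{cor:3.4}, and then to split back. Let $K$ be the kernel of the action of $G$ on the $N$-orbits, let $\mathcal{M}$ be the $G$-invariant perfect matching $\{\{\gamma,\gamma'\}\}$, and write $\Delta={\rm m}\Gamma$. Here $G$ acts vertex- and edge-transitively on $\Delta$ but not arc-transitively, because $G_\alpha$ induces only $C_2$ on $\Gamma(\alpha)$. The first task is to dispose of degenerate cases. Since $\Gamma/N$ is a cycle, $N$ is not semiregular in general, but $N$ is normal. Suppose $N$ is semiregular on $\V\Gamma$. Then \cref{lem:bigbigbig} applies.

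\emph{Semiregular $N$, case (1).} We have $|G_\alpha|=2$ and $K_\alpha=1$, so $K=N$. The $N$-orbits then form a cycle with $\Gamma$ cubic, so each vertex has neighbours in two adjacent orbits, one of them possibly its own orbit. A direct analysis of how the matching edges sit relative to the orbits forces one of two shapes. If every orbit has size $2$, $\Gamma$ is a circular or M\"obius ladder. Otherwise the matching edges lie between consecutive orbits and the cycle structure gives a split Praeger--Xu graph with $s=1$ or a ladder.

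\emph{Semiregular $N$, case (2).} Here $G_\alpha=K_\alpha$ is elementary abelian, and $K$ acts on $\Delta$ with abelian kernel structure. The normal closure $E=(G_\alpha)^G\leqslant K$ is what we examine next. Since $N$ is minimal normal, either $N\cap E=1$, or $N\leqslant E$. If $N\leqslant E$, the argument of \cref{prop:vCcov} applies: $N$ is central in the $2$-group $E$. In both alternatives one obtains an abelian normal subgroup of $G$ that is not semiregular on $\V\Delta$.

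\emph{Non-semiregular $N$.} If $N$ itself is not semiregular, then $N$ is an abelian normal subgroup, being a minimal normal $2$-group and hence elementary abelian. Its action on $\V\Delta$ is not semiregular either, because $N_\alpha$ fixes the matching edge $\{\alpha,\alpha'\}$.

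\emph{Conclusion.} In all nondegenerate situations we have produced an abelian normal subgroup of $G$ that is not semiregular on $\V\Delta$. \cref{cor:3.4} then gives $\Delta\cong\C(r,s)$ with $1\le s\le r-1$. Splitting back along the $G$-invariant $2$-factor, using that split and merge are mutually inverse (\cite[Theorem~2.9]{BarbieriGrazianSpiga2025}), yields $\Gamma\cong\Spl\C(r,s)$. The exceptions to the inverse correspondence are exactly the cases where ${\rm m}\Gamma$ degenerates, which are the ladders of \cref{def:ladders}.

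The main obstacle is the semiregular, $|G_\alpha|=2$ case. Here no abelian normal subgroup need act nonsemiregularly on $\Delta$, so I would instead argue combinatorially with the voltage/orbit structure over the cycle. I would try first to show that $\Delta/N$ is a cycle with $N$-orbits of size $2$, making $\Delta$ a wreath cycle $\C(r,1)$, unless the merge degenerates to a ladder.
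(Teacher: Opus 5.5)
The paper gives no proof of this lemma (it is imported from \cite{BarbieriGrazianSpiga2025}), so your argument has to stand on its own. It does not: the decisive gap is the last step, ``splitting back''. That merge and split are mutually inverse only gives $\Gamma\cong\mathrm{s}(\mathrm{m}\Gamma,\mathcal{C})$, where $\mathcal{C}$ is the cycle decomposition of $\mathrm{m}\Gamma$ induced by the $2$-factor complementary to $\mathcal{M}$. By contrast, $\Spl\C(r,s)$ is the split along one specific decomposition: the $H$-invariant one, which at each vertex pairs the two out-edges of $\vC(r,s)$ together and the two in-edges together. \cref{cor:3.4} identifies $\mathrm{m}\Gamma$ but says nothing about $\mathcal{C}$, and when $G_v$ induces $C_2^2$ on the neighbourhood of $v$ there are three $G_v$-invariant pairings.

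This matters, and in fact the statement as quoted is false. Let $\Gamma$ be the truncated tetrahedron: its vertices are the ordered pairs $(a,b)$ of distinct elements of $\{1,2,3,4\}$, with $(a,b)$ adjacent to $(a,c)$ for $c\ne b$ and to $(b,a)$. Take $G=\Sym(4)$ and $N$ the Klein four-group. Then $G_{(1,2)}=\langle(3\,4)\rangle$ fixes $(2,1)$ and swaps $(1,3),(1,4)$. Moreover $N$ is minimal normal and semiregular, its three orbits are the $1$-factors of ${\bf K}_4$, and $\Gamma/N$ is a triangle. The merge is the octahedron $\C(3,1)$, and $N$ fixes the merged vertex $\{(1,2),(2,1)\}$, so \cref{cor:3.4} applies. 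However, $\mathcal{C}$ consists of four triangles rather than the three squares. Since $\Gamma$ has girth $3$, while the ladders on $12$ vertices and $\Spl\C(3,1)$ (the only candidates by counting vertices) have girth $4$, the conclusion fails. The statement therefore needs an extra hypothesis such as $|G_\alpha|\ge4$; this is in force where the paper uses it, inside \cref{prop:minabel}. Even with that hypothesis you must still prove that $\mathcal{C}$ is the standard decomposition. For $r\ne4$ one can show that a $G$-invariant nonstandard pairing forces $|(G\cap K)_v|\le 2$, hence $|G_\alpha|\le2$.

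The case analysis also has problems. In case (1) of \cref{lem:bigbigbig}, $K=N$ forces $G/N\cong D_{2r}$. Then $G_\alpha N/N$ is the reflection fixing $\alpha^N$, which swaps the two neighbouring orbits, so $\alpha$ has exactly one neighbour in each of them and $\alpha'\in\alpha^N$. The matching edges therefore lie inside the $N$-orbits, not between consecutive ones, and $N$ fixes $\{\alpha,\alpha'\}$. Consequently your claim that no abelian normal subgroup need be nonsemiregular on $\mathrm{m}\Gamma$ is false when $|N|>2$; the case $|N|=2$ gives the ladders. This is precisely where the truncated tetrahedron lives, so the real obstacle there is identifying $\mathcal{C}$, not applying \cref{cor:3.4}.

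In case (2), your appeal to \cref{prop:vCcov} is misplaced, because that argument closes only through the base-size hypothesis. The case is in fact immediate. For every vertex $\beta$ we have $G_\beta=K_\beta\le K=NG_\alpha$. By minimality $N$ is central in the $2$-group $K$, and $G_\alpha$ is abelian, so $K$ is an abelian normal subgroup with $K_\alpha=G_\alpha\ne1$.

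Finally, $G$ is arc-transitive on $\mathrm{m}\Gamma$, not half-arc-transitive, since the orbit of non-matching arcs is closed under reversal. This is harmless for \cref{cor:3.4}.
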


\section{Abelian minimal normal subgroups} \label{sec:abelminnor}

If $\Gamma$ is not arc-transitive,
then the proof of \cref{thrm:main} proceeds by induction on the number of vertices.
In this section we handle the case where $G \leqslant \Aut(\Gamma)$ has a nontrivial abelian normal subgroup.

\begin{proposition} \label{prop:minabel} 
Let $\Gamma$ be a connected vertex-transitive cubic graph.
Let $G \leqslant \Aut(\Gamma)$ act transitively on $\V\Gamma$ and let $N$ be an abelian minimal normal subgroup of $G$.
Suppose that, if $\Gamma/N$ has valency $3$, then \cref{thrm:main} holds for $\Gamma/N$.
Then \cref{thrm:main} holds for $\Gamma$.
\end{proposition}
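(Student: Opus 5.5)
We argue by contradiction: assume $G$ has base size greater than $2$ and that $\Gamma$ is neither in \cref{tab:main} nor a split Praeger--Xu graph $\Spl\C(r,s)$ with $2s<r$. By \cref{prop:arctran} we may assume $G$ is not arc-transitive. Then $G_\alpha$ is a $2$-group and $G_\alpha$ fixes the matching neighbour $\alpha'$. Since $N$ is an abelian minimal normal subgroup, it is an elementary abelian $p$-group. We split into cases according to the valency of $\Gamma/N$, which is $0$, $1$, $2$ or $3$.

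\emph{Valency $3$.} Here $N$ is semiregular, since a nontrivial $N_\alpha$ would force two vertices of an $N$-orbit into a common neighbourhood. Hence $\Gamma$ is a regular cover of $\Gamma/N$, and $(G/N)_{\alpha^N}\cong G_\alpha$.

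\begin{itemize}
\item If $G/N$ has base size $2$ on $\Gamma/N$, choose $\beta^N$ with $G_\alpha N\cap G_\beta N=N$. Then $G_\alpha\cap G_\beta\leqslant N\cap G_\alpha=1$, so $G$ has base size $2$, a contradiction.
\item Otherwise the inductive hypothesis says $\Gamma/N$ is either in \cref{tab:main} or a split Praeger--Xu graph.
 \begin{itemize}
 \item In the table case $G/N$ is not arc-transitive, since $G$ is not. Only finitely many subgroups of the automorphism groups of these nine graphs arise, so we would dispose of this case by a direct computation over the irreducible $G/N$-modules $N$ and the resulting covers.
 \item In the split Praeger--Xu case, merge: $\mathrm{m}\Gamma$ is a $4$-valent $G$-edge-transitive cover of $\mathrm{m}(\Gamma/N)\cong\C(r,s)$. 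Its quotient by $N$ is $\C(r,s)$, and $G$ acting on $\V\mathrm{m}\Gamma$ has the same point stabilisers, now of index-$2$ type. When $p=2$, \cref{prop:vCcov} gives $\mathrm{m}\Gamma\cong\C(r',s')$, and splitting back shows $\Gamma\cong\Spl\C(r',s')$; \cref{bloodyhel} then forces $2s'<r'$. When $p$ is odd, $N$ intersects the $2$-groups trivially, so we need a short separate argument that base size $\le 2$ lifts.
 \end{itemize}
\end{itemize}

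\emph{Valency $2$.} Here $\Gamma/N$ is a cycle and $G_\alpha$ acts on $\Gamma(\alpha)$ as $C_2$.
\begin{itemize}
\item If $N$ is a $2$-group, \cref{lem:GNc} gives a circular or M\"obius ladder, or $\Spl\C(r,s)$. Ladders have stabilisers of order at most $4$ and two vertices with trivial joint stabiliser except in small cases, which appear in the table (the cube graph and ${\bf K}_{3,3}$). For $\Spl\C(r,s)$ we conclude via \cref{bloodyhel}.
\item If $p$ is odd, then $N$ is semiregular and \cref{lem:bigbigbig} applies. Either $|G_\alpha|=2$ with $K_\alpha=1$, where a vertex on the far side of the cycle witnesses base size $2$. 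Or $G_\alpha=K_\alpha$ is elementary abelian of order at most $2^{r/2}$, in which case an explicit choice of $\beta$ at distance about $r/2$ in the quotient, combined with the $K$-action, gives $G_\alpha\cap G_\beta=1$.
\end{itemize}

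\emph{Valency $1$ or $0$.} For valency $1$, $N$ has two orbits or orbits forming a matching, giving small or explicit graphs. For valency $0$, $N$ is transitive and abelian, so $N$ is regular and $G_\alpha$ acts faithfully on $N$ by conjugation. This is the generalized dihedral or abelian Cayley setting, where we can pick $\beta=\alpha^n$ for a suitable $n$ with $C_{G_\alpha}(n)\cap G_\alpha^{\,}=1$. These cases should be routine.

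\emph{Main obstacle.} The hardest step is the valency $3$ cover case where $\Gamma/N$ is a split Praeger--Xu graph. Transferring via merging to \cref{prop:vCcov} requires checking that base size and minimal normality survive the merge. The odd $p$ subcase also needs its own treatment, and the exceptional $r=4$ behaviour must be handled separately.
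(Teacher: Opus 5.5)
\textbf{There is a genuine gap: the odd-$p$ cases are left without an argument.}

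Your case split by the valency of $\Gamma/N$ matches the paper. So does lifting a base of size $2$ from $G/N$ when $N$ is semiregular, and so do the $p=2$ arguments (\cref{lem:GNc} in valency $2$; merging, \cref{prop:vCcov} and \cref{bloodyhel} in valency $3$). The problem is that the odd-$p$ cases carrying the real content have no actual argument, and the one mechanism you suggest cannot work. These are valency $1$, valency $2$ with $G_\alpha=K_\alpha$, and valency $3$ over a split Praeger--Xu graph.

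The missing idea is to find the second base point inside the $N$-orbit of $\alpha$, using the coprime action of $G_\alpha$ on $N$.
\begin{enumerate}
\item $G_\alpha$ acts faithfully on $N$ by conjugation. In valency $2$, ${\bf C}_K(N)=N\times L$ with $L\leqslant G_\alpha$ characteristic in a normal subgroup of $G$, so $L=1$ because $G_\alpha$ is core-free. In valency $3$ argue the same way with $E=(G_\alpha)^G$, passing to a minimal normal $2$-subgroup inside $L$ if $L\ne 1$.
\item $K/N$ (respectively $E/N$) is an elementary abelian $2$-group and $p$ is odd. Hence $N=N_1\times\cdots\times N_\kappa$ with each $N_i$ one-dimensional and acted on by $\pm1$. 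So $n=x_1\cdots x_\kappa$ with all $x_i\ne 1$ satisfies ${\bf C}_{G_\alpha}(n)=1$.
\item If $x=y^n$ with $x,y\in G_\alpha$, then $y^{-1}x=[y,n]\in G_\alpha\cap N=1$. So $y$ centralises $n$, and $G_\alpha\cap G_\alpha^n=1$.
\end{enumerate}

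Your choice of $\beta$ ``at distance about $r/2$ in the quotient'' is beside the point. In the second alternative of \cref{lem:bigbigbig}, every $G_\beta=K_\beta$ is a complement to $N$ in $K=N\rtimes K_\alpha$. By Schur--Zassenhaus it equals $K_\alpha^{n}$ for some $n\in N$. So the position of $\beta^N$ on the cycle is irrelevant, and everything hinges on the existence of $n$ as above. Likewise, in the split Praeger--Xu case with $p$ odd there is nothing to ``lift'': in that branch $G/N$ already has base size greater than $2$, and the trivial intersection must come from $N$ by exactly this argument.

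\textbf{The small-valency cases are also not handled.}

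In valency $1$ the graphs are neither small nor explicit; $N\cong C_p^2$ occurs for every odd $p$. The paper argues as follows.
\begin{itemize}
\item If all edges join the two $N$-orbits, semiregularity of $N$ gives $G_\alpha^{[1]}=G_\beta^{[1]}$ for adjacent $\alpha,\beta$. This forces $G_\alpha^{[1]}=1$ and $|G_\alpha|\le 2$.
\item Otherwise there is a matching between the orbits and $N=\langle n,n^x\rangle$. Then $G_\alpha\leqslant C_2^2$ acts by independent inversions of $n$ and $n^x$, and $\gamma=\alpha^{nn^x}$ gives $G_\alpha\cap G_\gamma=1$.
\end{itemize}

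Valency $0$ cannot occur at all. Here $|\V\Gamma|=|N|$ is even, so $N$ is a $2$-group, $G=NG_\alpha$ is a $2$-group, and hence $|N|=2$. Your ``suitable $n$'' would in any case be unjustified in characteristic $2$.

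When $\Gamma/N$ is in \cref{tab:main}, no cover computation is needed, and the one you describe is not finite since $p$ is unbounded. A finite check shows that every vertex-transitive subgroup of those nine automorphism groups with $2$-group vertex-stabilisers has base size at most $2$. So this branch is already closed by your first bullet.
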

\begin{proof}
By \cref{prop:arctran}, we can assume $G$ is not arc-transitive.
If $\alpha \in \V\Gamma$, then, by \cref{lem:Galpha},
$G_\alpha$ is a $2$-group of nilpotency class at most $2$ and exponent at most $4$.
Moreover, we may suppose that $|G_\alpha| \ge 4$, and we divide the proof according to the valency of $\Gamma/N$.

\smallskip
\noindent\textsc{$\Gamma/N$ has valency $0$.}
In this case $N$ acts transitively on $\V\Gamma$, and, by Frattini's Argument, $G = N G_\alpha$.
Since $N$ is abelian, it acts regularly on $\V\Gamma$.
Thus, $\Gamma$ is a Cayley graph over $N$.
As $\Gamma$ is cubic, $|\V\Gamma|$ is even and hence $N$ is an elementary abelian $2$-group.
Since both $G_\alpha$ and $N$ are $2$-groups, so is $G$.
As $N$ is minimal normal in $G$, we obtain $|\V\Gamma| = |N| = 2$, which goes against $\Gamma$ being cubic.

\smallskip
\noindent\textsc{$\Gamma/N$ has valency $1$.}
In this case $N$ has two orbits on $\V\Gamma$, forming a system of imprimitivity for the action of $G$ on $\V\Gamma$.
Let $G^+$ be the subgroup of $G$ fixing setwise the two $N$-orbits on $\V\Gamma$.
By Frattini's argument, $G^+ = N G_\alpha$. 

If $N$ is an elementary abelian $2$-group, then $G$ is a $2$-group and hence $|N| = 2$,
since $N$ is minimal normal in $G$.
Thus $|\V\Gamma| = 2|N| = 4$, and the proof follows immediately.
Suppose then that $N$ is an elementary abelian $p$-group, with $p$ odd.
In particular, since $G_\alpha$ is a $2$-group, $N$ acts semiregularly on $\V\Gamma$.

Let $\beta \in \Gamma(\alpha)$ and suppose that $\Gamma$ is bipartite,
with bipartition given by the orbits of $N$ on $\V\Gamma$.
Then there exist $n_1, n_2 \in N$ such that
\[
\Gamma(\alpha) = \{\beta, \beta^{n_1}, \beta^{n_2}\}.
\]
Then
\[
\Gamma(\beta) = \{\alpha, \alpha^{n_1^{-1}}, \alpha^{n_2^{-1}}\}.
\]
Let $g \in G_\alpha^{[1]}$.
Then
\[
\beta^{n_i} = (\beta^{n_i})^g = \beta^{n_i g} = \beta^{g n_i^g} = \beta^{n_i^g}.
\]
Since $N$ is normal in $G$, we have $n_i^g \in N$.
As $N$ acts semiregularly on each of its orbits, we deduce that $n_i = n_i^g$, and hence $g$ centralizes $n_i$.
Thus $g$ also centralizes $n_i^{-1}$, and so $g \in G_\beta^{[1]}$.
In particular, $G_\alpha^{[1]} = G_\beta^{[1]}$.  

A straightforward connectedness argument now shows that $G_\alpha^{[1]}$ fixes every vertex of $\Gamma$,
and hence $G_\alpha^{[1]} = 1$.
In particular, $G_\alpha$ acts faithfully on $\Gamma(\alpha)$ and $|G_\alpha|=2$.
This contradiction arises from assuming that $\Gamma$ is bipartite, with parts given by the orbits of $N$ on $\V\Gamma$.

Therefore, the subgraph of $\Gamma$ induced on each $N$-orbit is a disjoint union of cycles,
and $\Gamma$ also has a complete matching between the two $N$-orbits.
Let $\alpha' \in \Gamma(\alpha) \setminus \alpha^N$ be the neighbour of $\alpha$
which does not lie in the same $N$-orbit as $\alpha$.
Thus $G_\alpha=G_{\alpha'}$, and there exist $n,m \in N$ such that
\[
\Gamma(\alpha) = \{\alpha', \alpha^n, \alpha^{n^{-1}}\}
\quad \text{and} \quad
\Gamma(\beta) = \{\alpha, \alpha'^m, \alpha'^{m^{-1}}\}.
\]
Let $x \in G$ with $(\alpha,\alpha')^x = (\alpha',\alpha)$ and set $H = \langle x,n\rangle$.
Observe that $\Gamma(\alpha) \subseteq \alpha^H$.
By connectedness of $\Gamma$, it follows that $H$ is transitive on $\V\Gamma$.

Since $\{\alpha,\alpha^n\}$ is an edge of $\Gamma$, its image under $x$ is also an edge: namely,
\[
\{\alpha,\alpha^n\}^x
=
\{\alpha^x,\alpha^{nx}\}
=
\{\alpha',\alpha'^{\,n^x}\}.
\]
As $N$ acts semiregularly, $n^x \in \{m,m^{-1}\}$.
Replacing $m$ by $m^{-1}$ if necessary, we may assume that $n^x = m$.
Similarly, since $\{\alpha',\alpha'^{\,n^x}\}$ is an edge, so is its image under $x$, given by
\[
\{\alpha',\alpha'^{\,n^x}\}^x
=
\{\alpha,\alpha^{\,n^{x^2}}\}.
\]
Again using semiregularity of $N$, we deduce that $n^{x^2} \in \{n,n^{-1}\}$.
Thus $x^2$ normalises $\langle n\rangle$, and therefore
\[
H = \langle n,x\rangle
=
\langle n,n^x\rangle \rtimes \langle x^2\rangle.
\]

Since $H$ is transitive on $\V\Gamma$, we obtain $N =\langle n,n^x\rangle$.
Thus $N$ is either cyclic of order $p$ or elementary abelian of order $p^2$. 

Let $y \in {\bf C}_{G_\alpha}(N)$. Since $G_\alpha = G_{\alpha'}$, the element $y$ centralizes $\alpha^N \cup \alpha'^N = \V\Gamma$, and hence $y = 1$. Therefore ${\bf C}_{G_\alpha}(N) = 1$, and $G_\alpha$ acts faithfully by conjugation on $N$.

Let $y \in G_\alpha = G_{\alpha'}$. Since $y$ permutes the elements of $\Gamma(\alpha)\setminus\{\alpha'\}$ and $\Gamma(\alpha')\setminus\{\alpha\}$, we deduce that
\[
n^y \in \{n,n^{-1}\}
\quad\text{and}\quad
(n^x)^y \in \{n^x,(n^{-1})^x\}.
\]
As $N = \langle n,n^x\rangle$, it follows that either $|G_\alpha| = 2$ or $G_\alpha$ is elementary abelian of order $4$.
We have already excluded the former case.
In the latter, $N = \langle n,n^x\rangle$ is an elementary abelian $p$-group of order $p^2$, and $G_\alpha = \langle y_1,y_2\rangle$, where
\[
(n,n^x)^{y_1} = (n^{-1},n^x)
\quad\text{and}\quad
(n,n^x)^{y_2} = (n,(n^x)^{-1}).
\]

Let $\gamma = \alpha^{nn^x}$. Suppose $y \in G_\alpha \cap G_\gamma$. Then $y = z^{nn^x}$ for some $z \in G_\alpha$, and hence
\[
z^{-1}y = [z,nn^x] \in N \cap G_\alpha = 1.
\]
Thus $y = z$, and so $z^{nn^x} = z$, that is, $z$ centralizes $nn^x$.
However, the identity is the only element of $G_\alpha$ that centralizes $nn^x$, and hence $G_\alpha \cap G_\gamma = 1$. This shows that $G$ has base size at most $2$, which is a contradiction.

\smallskip
\noindent\textsc{$\Gamma/N$ has valency $2$.}
In this case $\Gamma/N$ is a cycle of length $\ell \ge 3$.
Let $K$ be the kernel of the action of $G$ on the $N$-orbits. Frattini's argument implies $K=NK_\alpha$.
Moreover, $G/K$ is isomorphic either to the cyclic group of order $\ell$ or to the dihedral group of order $2\ell$.

If $N$ is a $2$-group, then \cref{lem:GNc} implies that either
$\Gamma$ is isomorphic to the circular ladder or to the M\"{o}bius ladder,
or $\Gamma$ is isomorphic to $\Spl\C(r,s)$ for some $r$ and $s$.
In the latter case, $\Gamma$ satisfies \cref{thrm:main}.
When $\Gamma = \Cay(\mathbb{Z}_n \times \mathbb{Z}_2, \{(1,0), (-1,0), (0,1)\})$ is the circular ladder,
it is a routine exercise to verify that, when $n \ne 4$,
\[
\Aut(\Gamma)
=
(\mathbb{Z}_n \times \mathbb{Z}_2) \rtimes \langle \iota \rangle,
\]
where $\iota$ is the automorphism of $\mathbb{Z}_n \times \mathbb{Z}_2$ mapping each element to its inverse.
When $n = 4$, the circular ladder is the cube graph.
Similarly, when $\Gamma = \Cay(\mathbb{Z}_{2n}, \{1, -1, n\})$ is the M\"{o}bius ladder,
it is a routine exercise to verify that, for $n \ge 4$,
\[
\Aut(\Gamma)
=
\mathbb{Z}_{2n} \rtimes \langle \iota \rangle,
\]
where, as above, $\iota$ is the automorphism of $\mathbb{Z}_{2n}$ mapping each element to its inverse.
When $n = 3$, the M\"{o}bius ladder is the complete graph on $4$ vertices, and when $n = 2$,
the M\"{o}bius ladder is the complete bipartite graph.
In each case, $\Gamma$ satisfies \cref{thrm:main}.

Suppose that $N$ is an elementary abelian $p$-group with $p$ odd.
In particular, $N \cap G_\alpha =1$ and $N$ acts semiregularly on $\V\Gamma$.
\cref{lem:bigbigbig} and the assumption $|G_\alpha|\ge 4$ imply that the integer $\ell$ is even and $G_\alpha = K_\alpha$ is an elementary abelian $2$-group.

Let $C = \mathbf{C}_K(N)$. As $|C : N|$ is a power of $2$, we have $C = N \times L$ for some $2$-group $L \leqslant G_\alpha$. Since $L$ is characteristic in $C$ and $C \unlhd G$, it follows that $L \unlhd G$.
As $L \leqslant G_\alpha$ and $G_\alpha$ is core-free in $G$, we obtain $L = 1$, and hence $C = N$.
This shows that $G_\alpha$ acts faithfully by conjugation on $N$.

As $N$ is minimal normal in $G$, it can be viewed as an irreducible $\mathbb{F}_p(G/N)$-module,
where $\mathbb{F}_p$ denotes the finite field of order $p$.
Since $K/N \unlhd G/N$, Clifford's theorem implies that $N$ is a completely reducible $\mathbb{F}_p(K/N)$-module.
Therefore, we may write
\[
N = N_1 \times \cdots \times N_\kappa,
\]
where each $N_i$ is an irreducible $\mathbb{F}_p(K/N)$-module.
As $K/N$ is abelian, each $N_i$ has dimension $1$, and $K/N$ acts by scalars $\pm 1$ on $N_i$.
Thus, when we view the elements of $K/N$ acting by conjugation on $N$,
we may interpret them as $\kappa \times \kappa$ diagonal matrices whose diagonal entries are $\pm 1$.

Now, for each $i$, let $x_i \in N_i \setminus \{1\}$ and set
\[
n = x_1 \cdots x_\kappa.
\]
From the description above, if $g \in K$ centralizes $n$, then $g \in \mathbf{C}_K(N) = N$, because no element of $K \setminus N$ can centralize such an element $n$.
Thus no element of $G_\alpha$ centralizes $n$.

We claim that $G_\alpha \cap G_\alpha^n = 1$. Indeed, if $x,y \in G_\alpha$ satisfy $x = y^n$, then
\[
y^{-1} x = y^{-1} n^{-1} y n = [y,n] \in G_\alpha \cap N = N_\alpha = 1.
\]
Thus $x = y$,  and hence $y = y^n$, which implies that $y$ centralizes $n$.
Consequently, $y = 1$ and $G_\alpha\cap G_\alpha^n=1$.
This contradicts the assumption that the base size of $G$ is greater than $2$.

\smallskip
\noindent\textsc{$\Gamma/N$ has valency $3$.}
In particular, $N$ acts semiregularly on $\V\Gamma$ and is the kernel of the action of $G$ on the set of $N$-orbits. Let $\Delta = \Gamma/N$. By hypothesis, $\Delta$ satisfies \cref{thrm:main}. 

Suppose first that the automorphism group of $\Delta$ has base size at most $2$.
Since $G/N \leqslant \Aut(\Delta)$ and $N$ is semiregular, it follows that $G$ also has base size at most $2$. 

Suppose next that the automorphism group of $\Delta$ has base size greater than $2$. Then, by \cref{thrm:main}, the graph $\Delta$ is either one of the exceptional graphs listed in \cref{tab:main}, or $\Delta$ is isomorphic to $\Spl\C(r,s)$ for some integers $r,s$ satisfying $2s \ge r$.

Suppose that $\Delta$ is one of the exceptional graphs in \cref{tab:main}.
A case-by-case analysis shows that,
if $X \leqslant \Aut(\Delta)$ is vertex-transitive and all vertex-stabilisers in $X$ are $2$-groups,
then $X$ has base size at most $2$.
Now, since $G/N \leqslant \Aut(\Delta)$ and $G_\alpha$ is a $2$-group,
we deduce that the stabiliser $(G/N)_{\alpha^N} \cong G_\alpha$ is also a $2$-group.
Therefore, $G/N$ has base size at most $2$ in its action on the vertices of $\Delta$,
and hence $G$ has base size at most $2$ in its action on $\V\Gamma$.
This contradicts our standing assumption, and therefore this case does not arise.

Suppose now that $\Delta$ is isomorphic to $\Spl\C(r,s)$ for some $r$ and $s$.
Let ${\rm m}\Gamma$ be the merge of $\Gamma$.
Since $\Gamma/N$ is isomorphic to $\Spl\C(r,s)$,
it follows from~\cite[Lemma~9 and Theorem~10]{PotocnikSpigaVerret2013} that
\begin{itemize}
\item ${\rm m}\Gamma$ is $4$-valent;
\item $G/N$ acts faithfully on ${\rm m}\Gamma$;
\item ${\rm m}\Gamma/N$ is isomorphic to $\C(r,s)$.
\end{itemize}
Since the base size of $G/N$ in its action on $\V\Gamma$ is greater than $2$, the same holds for its action on $\V{\rm m}\Gamma$. 

If $N$ has even order, then \cref{prop:vCcov} implies that ${\rm m}\Gamma$ is isomorphic to $\C(r',s')$
for some integers $r'$ and $s'$.
Applying again~\cite[Lemma~9 and Theorem~10]{PotocnikSpigaVerret2013},
$\Gamma$ is isomorphic to $\Spl\C(r',s')$.
The necessary arithmetic condition on $r'$ and $s'$ follows from \cref{bloodyhel}. 

Therefore, for the remainder of the argument we may assume that $N$ has odd order.
To complete this last case, we need to borrow ideas both from the proof of \cref{prop:vCcov} and from the cyclic quotient case. Let $E$ be the normal closure of a vertex-stabiliser, that is,
\[ E = \langle G_\beta \mid \beta \in \V\Gamma \rangle ,\]
and let $C = \mathbf{C}_E(N)$.
Observe that, as $N$ is of odd order, reasoning as in (\ref{eq:propE}), $EN/N \leqslant H_{\alpha^N}$.
Hence, either $E$ is not semiregular, or $E$ is the extension of $N$ by a $2$-group.
In the former case, we conclude using \cref{cor:3.4}.
In the latter scenario, $\gcd(|C/N|, |N|) = 1$, and it follows that $C = N \times L$ for some elementary abelian $2$-group $L$. Observe that $C \unlhd G$, as $E$ is characteristic in $G$ and $C \unlhd E$. 
If $L \ne 1$, then we may choose a minimal normal subgroup $N'$ of $G$ contained in $L$.
Since $N'$ is a $2$-group, we may apply the preceding argument with $N$ replaced by $N'$.
Hence, we may assume that $L = 1$, that is, $C = N$.
It follows that $E/N$ (and, therefore, by semiregularity of $N$, $G_\alpha$) acts faithfully by conjugation on $N$.

We are now in a position to apply the same argument used in the case where $\Gamma/N$ has valency $2$,
which we repeat here for completeness.
Since $N$ is minimal normal in $G$, it can be viewed as an irreducible $\mathbb{F}_p(G/N)$-module,
where $\mathbb{F}_p$ denotes the finite field of odd order $p$.
As $E/N \unlhd G/N$, Clifford’s Theorem implies that $N$ is a completely reducible $\mathbb{F}_p(E/N)$-module.
Therefore, we may write
\[
N = N_1 \times \cdots \times N_\kappa,
\]
where each $N_i$ is an irreducible $\mathbb{F}_p(E/N)$-module.
Since $E/N$ is an elementary abelian $2$-group, each $N_i$ has dimension $1$,
and $E/N$ acts by scalars $\pm 1$ on $N_i$.
Thus, when viewing the elements of $E/N$ acting by conjugation on $N$,
we may interpret them as $\kappa \times \kappa$ diagonal matrices whose diagonal entries are $\pm 1$.

Now, for each $i$, let $x_i \in N_i \setminus \{1\}$ and set
\[
n = x_1 \cdots x_\kappa.
\]
From the description above, if $g \in E$ centralizes $n$,
then $g \in \mathbf{C}_E(N) = 1$, because no nontrivial element of $E$ can centralize such an element $n$.
Hence, no nontrivial element of $G_\alpha$ centralizes $n$.

We claim that $G_\alpha \cap G_\alpha^n = 1$. Indeed, if $x,y \in G_\alpha$ satisfy $x = y^n$, then
\[
y^{-1}x = y^{-1}n^{-1}yn = [y,n] \in G_\alpha \cap N = N_\alpha = 1.
\]
Thus $x = y$ and hence $y = y^n$, which implies that $y$ centralizes $n$.
Consequently, $y = 1$, and therefore $G_\alpha \cap G_\alpha^n = 1$.
This contradicts the assumption that the base size of $G$ is greater than $2$.
\end{proof}

\section{Reduction to monolithic groups} \label{sec:nonabelianNonTriv}

We are left with the case where $G \leqslant \Aut(\Gamma)$ has trivial soluble radical.
The aim of this short section is to reduce to the case of a unique minimal normal subgroup.

\begin{proposition}\label{prop:minnonabelnontriv}
Let $\Gamma$ be a connected vertex-transitive cubic graph.
Let $G \leqslant \Aut(\Gamma)$ act transitively on $\V\Gamma$.
Suppose that $G$ has trivial soluble radical
and let $N$ be a minimal normal subgroup such that ${\bf C}_G(N) \ne 1$.
If $\Gamma/N$ has valency $3$, then suppose that \cref{thrm:main} holds for $\Gamma/N$.
Then \cref{thrm:main} holds for $\Gamma$.
\end{proposition}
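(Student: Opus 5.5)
By \cref{prop:arctran} we may assume that $G$ is not arc-transitive. Then \cref{lem:Galpha} gives that $G_\alpha$ is a $2$-group of class at most $2$ and exponent at most $4$, and we may assume $|G_\alpha|\ge 4$. Since the soluble radical is trivial, $N$ is nonabelian. Because ${\bf C}_G(N)\neq 1$ and ${\bf C}_G(N)\unlhd G$, there is a second minimal normal subgroup $M\leqslant {\bf C}_G(N)$, which is also nonabelian, with $M\cap N=1$ and $[M,N]=1$. I would split into cases according to the valency of $\Gamma/N$, exactly as in the proof of \cref{prop:minabel}, and in each case aim either for a contradiction or for $\Gamma/N$ with the same base size.

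\textbf{Valency $0$ or $1$.} Here $N$ has at most two orbits, so $N$ or an index-$2$ subgroup $NG_\alpha\cap G^+$ is transitive on its orbit. Since $M$ centralizes $N$, the group $M$ acts semiregularly on each $N$-orbit, because the centralizer of a transitive group is semiregular. Interchanging the roles, $N$ is also semiregular on the $M$-orbits.
\begin{itemize}
\item In valency $0$, $N$ is regular. Then $M$ embeds in the regular centralizer, so $M\cong N$ and $G\leqslant N\times M$ extended by $G_\alpha$ acts as in the holomorph-type setting. The stabiliser $G_\alpha$ embeds in $\Aut(N)$ via conjugation.
\item In valency $1$, the same argument applies inside each orbit.
\end{itemize}
In either case $N$ contains an element $n$ not centralized by any nontrivial element of $G_\alpha$. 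Since $N$ is a nonabelian minimal normal subgroup and $G_\alpha$ is a $2$-group, a $2$-group acting faithfully on $T^k$ has a regular orbit; I would use the usual argument that fixed-point spaces of involutions cover less than all of $N$. The commutator trick from \cref{prop:minabel} then gives $G_\alpha\cap G_\alpha^{n}=1$: if $x=y^n$ with $x,y\in G_\alpha$, then $[y,n]\in G_\alpha\cap N$, and that intersection is trivial when $N$ is semiregular. This yields a contradiction.

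\textbf{Valency $2$.} Here $\Gamma/N$ is a cycle, so $G/K$ is cyclic or dihedral, where $K$ is the kernel on the $N$-orbits. Then $M$ has nontrivial image in a soluble group, so $M\leqslant K$. Also $M\cap G_\alpha$ is a normal $2$-subgroup of $M$ (since $M=M_\alpha N$-like arguments), which gives $M_\alpha=1$. Hence $M$ is semiregular, and symmetrically so is $N$. Using \cref{lem:bigbigbig}, $G_\alpha=K_\alpha$ is elementary abelian and acts faithfully on $N$ by conjugation. Faithfulness holds because ${\bf C}_K(N)\cap G_\alpha$ would be a normal $2$-subgroup after the same core-free argument. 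I then again pick $n\in N$ with ${\bf C}_{G_\alpha}(n)=1$ and conclude as above.

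\textbf{Valency $3$.} In this case $N$ is semiregular and $G/N$ acts faithfully on $\Delta=\Gamma/N$ with isomorphic stabilisers. If $\Aut(\Delta)$ has base size at most $2$, or $\Delta$ is in \cref{tab:main} (where $2$-group-stabiliser subgroups have base size $2$), we are done. Otherwise $\Delta\cong\Spl\C(r,s)$ and $G/N\leqslant H=C_2^r\rtimes D_{2r}$ is soluble. But $MN/N\cong M$ is a nonabelian composition factor of $G/N$, a contradiction.

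The main obstacle is the regular-orbit step for the $2$-group $G_\alpha$ acting on the nonabelian $N=T^k$, in the case where $G_\alpha$ permutes the factors. I would first try a union bound as in \cref{lem:threetimess}, using the exponent at most $4$ and the fact that an involution centralizes at most $|T|^{k}$ raised to a fraction strictly below $1$. If that fails, I would try to transfer a regular orbit from the semiregular action of $M$.
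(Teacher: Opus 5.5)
\textbf{There is a genuine gap in valencies $0$, $1$ and $2$.}

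\emph{What is unproved.} In these three cases your plan depends on two facts that you have not established.

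First, you need an element $n\in N$ with ${\bf C}_{G_\alpha}(n)=1$, where the $2$-group $G_\alpha$ acts on $N=T^k$ and may permute the factors. You yourself flag this as the main obstacle and leave it open. A plain union bound over involutions does not obviously work once $G_\alpha$ contains many involutions that act on only a few factors.

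Second, the commutator trick needs $G_\alpha\cap N=1$, i.e.\ semiregularity of $N$, and you never prove it. In valency $0$ you assert that $N$ is regular, but that does not follow from $M$ being semiregular on the $N$-orbits. ``Interchanging the roles'' would require $M$ to be transitive on the relevant orbits, which is not known. In valency $1$ the argument is only gestured at, and in valency $2$ the claim ``symmetrically so is $N$'' is likewise unjustified. So, as written, these cases are not proved.

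\emph{The missing idea.} These three cases cannot occur at all, so no regular-orbit statement is needed. Since $N$ is a nonabelian minimal normal subgroup, ${\bf Z}(N)=1$. Hence ${\bf C}_G(N)\cap N=1$, and ${\bf C}_G(N)$ embeds in $G/N$.

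\begin{itemize}
\item \emph{Valencies $0$ and $1$.} Frattini's argument gives $G=NG_\alpha$ in valency $0$ and $|G:NG_\alpha|=2$ in valency $1$. In both cases $G/N$ is a $2$-group. So ${\bf C}_G(N)\neq 1$ is a normal $2$-subgroup of $G$, contradicting the triviality of the soluble radical.
\item \emph{Valency $2$.} You correctly obtain $M\leqslant K$, and $K=NK_\alpha$ by Frattini. Then $M\cong MN/N\leqslant K/N\cong K_\alpha/N_\alpha$, which is a $2$-group. This is impossible for a nonabelian minimal normal subgroup. The paper phrases the same contradiction as follows: a nontrivial odd-order $m\in M$ satisfies $\alpha^m=\alpha^n$ for some $n\in N$, so $mn^{-1}\in G_\alpha$ has order divisible by an odd prime.
\end{itemize}

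\textbf{Valency $3$ is fine.} Once the first three cases are seen to be vacuous, only valency $3$ remains, and your argument there goes through. It differs from the paper's route. The paper also passes to $\Gamma/M$ and rules out the rows of \cref{tab:main} one by one, using solubility, minimal normal subgroups and stabiliser orders. You instead reuse two facts: the observation from the proof of \cref{prop:minabel} that vertex-transitive subgroups of those automorphism groups with $2$-group stabilisers have base size at most $2$, and the solubility of $\Aut(\Spl\C(r,s))$, which contradicts $M\cong MN/N\leqslant G/N$. Your route is slightly shorter and does not need the quotient $\Gamma/M$.
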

\begin{proof}
By \cref{prop:arctran}, we can assume $G$ is not arc-transitive.
By Proposition~\ref{prop:minabel},
we may also assume that every nontrivial normal subgroup of $G$ is nonabelian.
If $\alpha \in \V\Gamma$, then, by \cref{lem:Galpha},
$G_\alpha$ is a $2$-group of nilpotency class at most $2$ and exponent at most $4$.
Moreover, we may suppose that $|G_\alpha| \ge 4$, and we divide the proof according to the valency of $\Gamma/N$.

\smallskip
\noindent\textsc{$\Gamma/N$ has valency $0$.}
Then $N$ is transitive on $\V\Gamma$, and hence $G = G_\alpha N$.
Since $G/N$ is a $2$-group and ${\bf C}_G(N) \ne 1$,
it follows that $G$ has a minimal normal subgroup that is a $2$-group, which is impossible.

\smallskip
\noindent\textsc{$\Gamma/N$ has valency $1$.}
In this case, $N$ has two orbits on $\V\Gamma$, and $G$ admits a normal subgroup $G^+$
fixing setwise the two $N$-orbits, with $|G : G^+| = 2$.
By Frattini's argument, $G^+ = G_\alpha N$, and hence $G/N$ is a $2$-group.
As before, this implies that ${\bf C}_G(N)$ is a nontrivial $2$-group, which is impossible.

\smallskip
\noindent\textsc{$\Gamma/N$ has valency $2$.}
Let $M \leqslant {\bf C}_G(N)$ be a normal subgroup of $G$ distinct from $N$,
and let $K$ be the kernel of the action of $G$ on the set of $N$-orbits.
Since $\Aut(\Gamma/N)$ is dihedral, the quotient $G/K$ is soluble.
Therefore, the nonabelian minimal normal subgroup $M$ must be contained in $K$. In particular,
\[
\alpha^M \subseteq \alpha^K = \alpha^N.
\]
Let $m \in M \setminus \{1\}$ be an element of odd order.
Since $\alpha^m \in \alpha^N$, there exists $n \in N$ such that $\alpha^m = \alpha^n$, that is, $mn^{-1} \in G_\alpha$.
As $mn^{-1} \in M \times N$, the element $mn^{-1}$ has order divisible by an odd prime.
This contradicts the fact that $G_\alpha$ is a $2$-group.

\smallskip
\noindent\textsc{$\Gamma/N$ has valency $3$.}
Let $M \leqslant {\bf C}_G(N)$ be a normal subgroup of $G$ distinct from $N$,
and observe that we may assume that $\Gamma/M$ has valency $3$.
Otherwise, an entirely analogous argument with the roles of $N$ and $M$ swapped would make us conclude.
Moreover, we may suppose that both $\Gamma/N$ and $\Gamma/M$ are graphs appearing in the conclusion of\cref{thrm:main}.
Observe that both $G/N$ and $G/M$ act faithfully on the corresponding quotient graphs.

In particular, the group $NM/N \cong M$ is a section of $\Aut(\Gamma/N)$
and $NM/M \cong N$ is a section of $\Aut(\Gamma/M)$. 
Assume, for a contradiction, that $\Gamma/N \cong \Spl\C(r,s)$ for some $r$ and $s$.
Then $M $ embeds into $\Aut(\Spl\C(r,s))$, which is impossible because
$\Aut(\Spl\C(r,s))$ is soluble.
Hence both $\Gamma/N$ and $\Gamma/M$ must be among the exceptional graphs in \cref{tab:main}.

The graphs in rows~1,~2,~3, and~6 cannot arise, since their automorphism groups are soluble.
The graph in row~9 can also be excluded: one checks that every transitive subgroup of its automorphism group
has a unique minimal normal subgroup of order $3$.
Finally, the graphs in rows~4,~5,~7, and~8 cannot arise,
because no transitive subgroup of their automorphism groups has vertex-stabilisers
that are $2$-groups of order at least $4$.    
\end{proof}

\section{Permutation $2$-groups and colourings} \label{sec:Col}

In this section we deviate from graphs and deal with abstract permutation groups.
For $G \leqslant \Sym(\Omega)$ and $j \geq 1$, a {\em colouring} is a function $\Omega \to \{1,\ldots,j\}$
(here $\{1,\ldots,j\}$ is the set of {\em colours}).
A colouring is {\em asymmetric} if the identity is the only element of $G$ fixing the colouring.
In 1983, Gluck \cite{Glu83} showed that every permutation group of odd order
admits an asymmetric colouring with at most $2$ colours.
Note that this is the same as asking for a subset $X \subseteq \Omega$ such that
the identity is the only element of $G$ that preserves $X$.
By the dihedral group $D_8$ in its action of degree $4$, the odd-order assumption in Gluck's theorem is critical.
We observe that this is the only exception in the range of transitive $2$-groups of class at most $2$.

\begin{lemma}\label{lem:partitions2} 
    Let $P \leqslant \Sym(\Omega)$ be a transitive permutation $2$-group of nilpotency class at most $2$.
    Suppose that $P \neq D_8$.
    Then there exists $X \subseteq \Omega$ such that the identity is the only element of $P$ that preserves $X$.
    Moreover, when $|\Omega|>8$, we can choose $X$ with $|X|<|\Omega|/2$.
\end{lemma}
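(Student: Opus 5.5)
We argue by induction on $|\Omega|$, using a block system with blocks of size $2$. Since $P$ is a transitive $2$-group, $|\Omega|=2^k$. The small cases $|\Omega|\le 8$ are handled by direct inspection: every transitive $2$-group of degree $2$, $4$ or $8$ and class at most $2$, other than $D_8$ of degree $4$, admits a distinguishing subset. Note that $C_2$ and $C_4$ of degree $2$ or $4$ are regular, so there a singleton works. For larger degree, $P$ is imprimitive because it is a $p$-group. Choose a maximal subgroup $M$ of $P$ containing $P_\omega$. This gives a block system $\Sigma$ with $|\Sigma|=|P:M|=2$ or, more usefully, a system with blocks of size $2$ obtained from a subgroup $P_\omega<Q$ with $|Q:P_\omega|=2$.

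The inductive step uses blocks $\Sigma$ of size $2$. The group $P^\Sigma$ is a transitive $2$-group of class at most $2$ of degree $|\Omega|/2$, so we expect induction to give a set $Y\subseteq\Sigma$ with trivial setwise stabiliser in $P^\Sigma$. The obstacles are that $P^\Sigma$ might be $D_8$ of degree $4$, and that the kernel $L$ of $P\to P^\Sigma$ may be nontrivial. To lift, put $X$ equal to one point from each block in $Y$, together with one point from each of a suitably chosen family of blocks not in $Y$. Any $g$ preserving $X$ must preserve the union of blocks meeting $X$ in exactly two points and also those meeting $X$ in exactly one point. The kernel $L$ is an elementary abelian group acting on $\Sigma$ as a subgroup of $C_2^{|\Sigma|}$. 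To kill $L$, we choose the points inside blocks so that each block meets $X$ in exactly one point, or else arrange things so that $L$ is detected. The cleanest version is to take $X$ to contain exactly one point from each block of $Y$ and both points of no block. Then $g$ preserving $X$ permutes the blocks of $Y$, so $g$ lies in $L$. Since $L$ acts as independent swaps, $g$ must fix every block in $Y$ pointwise, and hence $g$ acts trivially on $Y$. Because $L\unlhd P$ and $P$ is transitive on $\Sigma$, this does not automatically force $g=1$. We therefore need $Y$ to meet the support of every nontrivial element of $L$, i.e.\ $Y$ must be a distinguishing set that is also a hitting set for supports. We plan to achieve this by also putting both points of the blocks in $\Sigma\setminus Y$ into play via a second layer of colour, i.e.\ choosing exactly one point from each block of $\Sigma$ (a transversal), which kills $L$ entirely, and then encoding $Y$ by the choice of transversal points. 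This needs a coherent orientation of the blocks, and that is where the class-$2$ hypothesis should enter.

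The main obstacle is making this lifting rigorous and handling $P^\Sigma\cong D_8$ of degree $4$, which gives degree $8$ and is covered by the computation, and more generally the quotients where induction fails. We would try first to use blocks of size $4$, chosen via $\Phi(P)$ and the centre ${\bf Z}(P)$. Since $[P,P]\le{\bf Z}(P)$, the centre acts semiregularly-ish on blocks, which gives control on $L$. We would also use the size bound $|X|<|\Omega|/2$ in the inductive hypothesis: it guarantees room to distinguish $X$ from its complement, and to ensure that the lifted $X$ has fewer than half the points. Counting gives $|X|\le |Y|+(\text{extra})<|\Omega|/2$ provided $|Y|<|\Sigma|/2$, and this also prevents elements swapping $X$ with its complement.
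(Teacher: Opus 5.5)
Your proposal sets up a reasonable framework (blocks of size $2$, induction on the quotient, the size bound), but it stops at the decisive step. You never show that an element $g$ of the kernel $L$ that preserves the lifted set is trivial. The remedies you suggest (a coherent orientation of the blocks, a second colour layer, blocks of size $4$ via $\Phi(P)$ and ${\bf Z}(P)$) are not carried out. Moreover, with your concrete choice of $X$ (one point from each block of $Y$) the argument cannot close: an element $g\in L$ preserving $X$ is then only known to fix the $2|Y|<|\Omega|/2$ points lying in the blocks of $Y$.

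The missing idea is a fixed-point-ratio consequence of the class-$2$ hypothesis. Suppose $x\in P_\alpha$ and $x^g\in P_\alpha$. Then $[x,g]\in[P,P]\cap P_\alpha$. This is a normal subgroup of $P$, since every subgroup of $[P,P]\le{\bf Z}(P)$ is normal, and it lies in the core-free subgroup $P_\alpha$, so it is trivial. Hence $x^P\cap P_\alpha=\{x\}$ and $\mathrm{fpr}_\Omega(x)=1/|x^P|$. A central element of a point stabiliser of a transitive group is trivial, so every nonidentity element of $P$ fixes at most $|\Omega|/2$ points. This is exactly the ``hitting set'' property you were looking for: every nontrivial element of $L$ swaps at least half of the blocks.

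With this in hand the lift is easy, but the transversal must go on the \emph{complement} of $Y$. Put one point of $X$ in each block of $\Sigma\setminus Y$ and no points in the blocks of $Y$.
\begin{itemize}
\item Any $g$ preserving $X$ preserves $\Sigma\setminus Y$, so $g$ preserves $Y$, and the inductive choice of $Y$ forces $g\in L$.
\item Then $g$ fixes every point in the blocks outside $Y$, which is $|\Omega|-2|Y|>|\Omega|/2$ points, so $g=1$.
\item Also $|X|=|\Sigma|-|Y|<|\Omega|/2$, because $Y\neq\emptyset$.
\end{itemize}
This uses the strict inequality $|Y|<|\Sigma|/2$ from the inductive hypothesis, which is only available when $|\Sigma|>8$. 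Equality would not suffice, since elements with $|x^P|=2$ fix exactly half the points. So your base range $|\Omega|\le 8$ is too small. One must verify all degrees $|\Omega|\le 16$ directly, as the paper does by computer, and run the induction only for $|\Omega|>16$. This also removes the $D_8$-quotient issue automatically.
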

\begin{proof}
    We have verified by computer the veracity of the statement when $|\Omega| \leq 16$.
    We argue by induction on $|\Omega|$, and thus we can assume that $|\Omega| > 16$.
    Let $Z$ be a central subgroup of $P$ of order $2$, and let
    \[ K = \bigcap_{\alpha \in \Omega} ZP_\alpha\]
    be the kernel of the action of $P$ on the set $\Sigma$ of $Z$-orbits.
    Since $|\Sigma|=|\Omega|/2>8$, by induction there exists $X' \subseteq \Sigma$, with $|X'| < |\Sigma|/2$,
    whose setwise stabiliser in $P$ lies in $K$.
    We build $X \subseteq \Omega$ by choosing, from each $Z$-orbit in $\Sigma \setminus X'$, 
    exactly one point, and not including any points lying in the $Z$-orbits belonging to $X'$.
    Note that
    \[|X| < |\Sigma| = |\Omega|/2 .\]
    Let $x \in P$ be a permutation that stabilises $X$.
    We aim to prove that $x$ is trivial.
    By construction, $x$ fixes $X'$ setwise, and hence lies in $K$.
    Moreover, $x$ fixes at least $|\Omega \setminus X| > |\Omega|/2$ points.
    Choose a point $\alpha \in \Omega$ such that $x \in P_\alpha$. By (\ref{eq:fpr}),
    \begin{equation} \label{eq:color}
     \frac{1}{2} < \mathrm{fpr}_\Omega(x) = \frac{|x^P \cap P_\alpha|}{|x^P|} .
    \end{equation}
    We claim that $|x^P|=1$.
    For every $g \in P$ such that $x^g \in P_\alpha$, $[x,g] \in P_\alpha \cap [P,P]$.
    Since $[P,P] \subseteq {\bf Z}(P)$ by hypothesis, $P_\alpha \cap [P,P]$ is a normal subgroup of $P$ contained in $P_\alpha$.   	As $P_\alpha$ is core-free, $P_\alpha \cap [P,P]$ is trivial.
    Hence, $x^P \cap P_\alpha = \{x\}$, and, by (\ref{eq:color}), $|x^P|=1$, which proves the claim.
    In particular, $x \in P_\alpha \cap {\bf Z}(P) =1$, as desired.
\end{proof}

We now show that, when three colours are allowed, every finite $2$-group admits an asymmetric colouring.
We actually prove a stronger statement that is suitable for an inductive argument.
Two colourings are {\em inequivalent} if they lie in distinct orbits under the action of $P$.

\begin{lemma} \label{lem:partitions}
Let $|\Omega| \geq 2$ and let $P \leqslant \Sym(\Omega)$ be a finite $2$-group.
Then there exists at least $3$ inequivalent asymmetric colourings with at most $3$ colours.
\end{lemma}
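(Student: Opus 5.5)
We argue by induction on $|\Omega|$, and first reduce to transitive groups. If $P$ is intransitive, write $\Omega=\Omega_1\sqcup\Omega_2$, where $\Omega_1$ is a $P$-orbit and $\Omega_2$ its complement, which is $P$-invariant. Let $K_1$ be the kernel of $P$ on $\Omega_1$. We colour $\Omega_1$ so as to break $P^{\Omega_1}$, and colour $\Omega_2$ so as to break the action of $K_1$ on $\Omega_2$, which is faithful. Rather than $K_1$, it is cleaner to work with $P^{\Omega_2}$ restricted to the setwise stabiliser of the colouring on $\Omega_1$. Concretely, an element preserving the combined colouring acts trivially on $\Omega_1$, so it lies in $K_1$, and then trivially on $\Omega_2$. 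Orbits of size $1$ carry no information and can be coloured arbitrarily; this freedom already gives inequivalent colourings. Counting inequivalent colourings, we get at least $3\cdot 3$ from the two pieces, since distinct orbit-classes on each piece stay inequivalent overall, as $P$ preserves both pieces. A little care is needed when one piece is a fixed point, but there the three colours on that point give three inequivalent extensions. So it suffices to treat transitive $P$.

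For transitive $P$ with $|\Omega|=2^k$, take a maximal block system $\Sigma$ with $|\Sigma|=2$, whose blocks $B_1,B_2$ are swapped by $P$. Let $Q$ be the stabiliser of $B_1$; it has index $2$ and preserves $B_2$. Apply induction to $Q^{B_1}$ and $Q^{B_2}$, or better to $Q$ acting on $B_1$ and then the kernel on $B_2$ as in the intransitive step. We want colourings $c$ of $\Omega$ that are $Q$-asymmetric and whose restrictions to $B_1$ and $B_2$ are \emph{not} interchanged by any element of $P\setminus Q$. Such a colouring is $P$-asymmetric. To guarantee the second condition, we make the colour multisets, or the $Q$-orbit classes, on the two blocks differ. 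For example, we colour $B_1$ using the pattern from the induction hypothesis and $B_2$ with a different one among the three inequivalent asymmetric colourings available. An element swapping the blocks would then have to carry one class to the other, which fails if we ensure the two colour-count vectors differ, for instance by permuting colour names on $B_2$. Pairs of choices give at least $3$ inequivalent outcomes.

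The base case $|\Omega|=2$ is immediate: with $P=C_2$, the colourings $(1,2)$, $(1,3)$, $(2,3)$ are asymmetric and pairwise inequivalent.

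The main obstacle is ensuring in the transitive step that no element of $P\setminus Q$ maps the colouring on $B_1$ to that on $B_2$ while keeping three inequivalent colourings. The inductive hypothesis on $B_2$ gives only orbit-inequivalence under $Q^{B_2}$, not under the twisted identification via an element of $P\setminus Q$. I would first try to resolve this by a colour-count argument: the three inequivalent asymmetric colourings can be chosen, or post-composed with a permutation of $\{1,2,3\}$, so that their colour-count vectors are distinct. Since $P$ preserves colour counts, the block-swapping elements are then ruled out. If counts happen to coincide, I would use the $3!$ renamings of colours to break the tie, since an asymmetric colouring of a set of size at least $2$ uses at least two colours, so renaming changes the count vector.
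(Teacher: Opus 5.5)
Your reduction to the transitive case is sound, and so is the asymmetry half of the transitive step. The justification for the renaming trick needs one correction, though. The right reason is that $|B_2|=2^{k-1}$ is not divisible by $3$, so the count vector on $B_2$ is never constant. ``Uses at least two colours'' is not enough: a vector $(n,n,n)$ is fixed by every renaming, and if $K^{B_2}=1$ a monochromatic colouring is allowed anyway.

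The genuine gap is the last sentence, ``Pairs of choices give at least $3$ inequivalent outcomes.'' The twisted identification you flagged for asymmetry is equally a problem for inequivalence. Your device only ensures that, within each colouring, the counts on $B_1$ and $B_2$ differ. It does not stop an element of $P\setminus Q$ from carrying $(c_1,c_2)$ to $(c_1',c_2')$ whenever $\mathrm{count}(c_1)=\mathrm{count}(c_2')$ and $\mathrm{count}(c_2)=\mathrm{count}(c_1')$.

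This does happen. Take $P=D_8=\langle (1\,2\,3\,4),(1\,3)\rangle$ with $B_1=\{1,3\}$ and $B_2=\{2,4\}$. The colourings with values $(1,1,2,3)$ and $(1,1,3,2)$ on $1,2,3,4$ both follow your recipe: they use $Q^{B_1}$-inequivalent patterns on $B_1$, a different pattern on $B_2$, and different counts on the two blocks. Both are $P$-asymmetric. Yet $(1\,2)(3\,4)\in P\setminus Q$ maps one to the other. So the choices must be coordinated, and this needs an argument.

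One repair inside your framework is to strengthen the induction: require three asymmetric colourings with pairwise distinct colour-count vectors on $\Omega$. Counts are a $P$-invariant, so distinct counts force inequivalence. In the transitive step, take distinct counts $u^{(1)},u^{(2)},u^{(3)}$ on $B_1$ and one $K^{B_2}$-asymmetric colouring of $B_2$ with count $w$. There are two cases.
\begin{enumerate}
\item Some renaming $v$ of $w$ avoids every $u^{(j)}$; use it for all three.
\item Otherwise the $\Sym(3)$-orbit of $w$, which has size at least $3$, is exactly $\{u^{(1)},u^{(2)},u^{(3)}\}$; use $v^{(j)}=u^{(j+1)}$ (indices mod $3$).
\end{enumerate}
In both cases $v^{(j)}\neq u^{(j)}$, which gives asymmetry, and the totals $u^{(j)}+v^{(j)}$ are pairwise distinct. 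The intransitive step (keep one fixed colouring on the complement) and the base case preserve the stronger property.

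The paper avoids the issue altogether by passing to a Sylow $2$-subgroup of $\Sym(\Omega)$; asymmetry and inequivalence both descend to subgroups. In the transitive case the group is then an iterated wreath product, so the identification between blocks is canonical. There the three colourings of $C_2$ serve as the colours for $P_k$ in $P_{k+1}=C_2\wr P_k$.
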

\begin{proof}
We follow the strategy described in \cite[Section~2]{Sabatini2026}.
We can assume that $P$ is a Sylow $2$-subgroup of $\Sym(\Omega)$, and work by induction on $|\Omega|$.
Moreover, if $P$ is not transitive, then $\Omega$ is the disjoint union of its $P$-orbits.
By induction we may colour each orbit, and combining these colourings
yields asymmetric colourings of $\Omega$ with at most $3$ colours.

If $P_k = C_2 \wr_2 \ldots \wr_2 C_2$ is the $k$-fold wreath product,
then we may assume $P = P_k$ and $|\Omega|=2^k$ for some $k \geq 1$ and .
The claim holds for $k=1$.
For each $k \geq 1$, we use the $3$ inequivalent colourings of $C_2$ as colours for $P_k$.
This generates a colouring of $P_{k+1}$ from each colouring of $P_k$.
Since inequivalent colourings of $P_k$ provide inequivalent colourings of $P_{k+1}$ \cite[Lemma~2.9]{Sabatini2026},
the proof follows.
\end{proof}

\section{Intersections of $2$-subgroups in almost simple groups} \label{sec:nonabelian}

In the proof of \cref{thrm:main},
we are left with $G \leqslant \Aut(\Gamma)$ being a monolithic group with nonabelian socle.
The following fundamental result is purely group theoretical and will be proven in the next section.
    
    \begin{proposition} \label{prop:minnonabel}
    Let $G$ be a monolithic group with nonabelian socle $M$,
    and let $P$ be a $2$-subgroup of $G$ having nilpotency class at most $2$ and exponent at most $4$.
    Then there exists $m \in M$ such that $P \cap P^m = 1$.  
\end{proposition}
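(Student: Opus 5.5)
We write $M = T_1 \times \cdots \times T_\ell$ with $T_i \cong T$ nonabelian simple. Since $G$ is monolithic, ${\bf C}_G(M)=1$, so
\[ M \unlhd G \leqslant \Aut(T)\wr \Sym(\ell) . \]
Let $\pi\colon G \to \Sym(\ell)$ be the permutation action on the factors. Note that $P\pi$ is a $2$-group, and it is transitive on each of its orbits, with class at most $2$.

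The argument is split into two stages. First we settle the almost simple case $\ell=1$. Then we lift to general $\ell$ by a colouring argument on $\{1,\ldots,\ell\}$.

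\smallskip
\noindent\textsc{The almost simple case.} For $\ell=1$ the claim is that a $2$-subgroup $P$ of an almost simple group $G$, of class at most $2$ and exponent at most $4$, admits $m\in T$ with $P\cap P^m=1$. We would prove a stronger statement that the induction in the second stage needs: for each such $P \leqslant \Aut(T)$, the group $P$ has at least three (in fact many) regular orbits on the coset space $T/(P\cap T)$. Equivalently, there are at least three $P$-orbits of elements $m\in T$ with $P\cap P^m=1$ that are pairwise distinct as double cosets $N_PmN_P$ for suitable normalising data.

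The tool is the fixed-point-ratio count of \cref{lem:threetimess}, applied to $P$ acting on $\Omega=T/(P\cap T)$ or on the conjugates of $P$. We sum over involutions $h\in P$, using standard bounds $\mathrm{fpr}(h)\le |h^G\cap P|/|h^G|$, together with the crude estimates $|P|\le |T|_2\cdot|\mathrm{Out}(T)|_2$ and $i_2(P)\le |P|$.

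Since exponent at most $4$ and class at most $2$ force $|P|$ to be small relative to a Sylow $2$-subgroup in most families, the inequality should hold for all but finitely many $T$. For example, in Lie type in odd characteristic $|P|$ is polynomially bounded in the rank, whereas in even characteristic unipotent groups of class $2$ and exponent $4$ still have order around $q^{O(n^2)}$. In the remaining small cases we would appeal to the results of Zenkov and of Burness--Huang on intersections of Sylow $2$-subgroups, namely that $S\cap S^m=1$ except for an explicit list, and check that list by computer, using that $P$ is smaller than $S$.

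\smallskip
\noindent\textsc{General $\ell$.} Let $\Delta=\{1,\ldots,\ell\}$. We treat each $P\pi$-orbit separately, so we may assume $P\pi$ is transitive. Write $Q=N_P(T_1)$ and let $Q_1\leqslant\Aut(T)$ be its projection to the first factor. Then $Q_1$ is again a $2$-group of class at most $2$ and exponent at most $4$.

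By the almost simple stage, $Q_1$ has at least three regular orbits on $T$, with representatives $t_a,t_b,t_c$ giving pairwise distinct double cosets. Using \cref{lem:partitions}, or \cref{lem:partitions2} when only two colours are required, we choose an asymmetric colouring $\chi$ of $\Delta$ under $P\pi$ with at most $3$ colours. We then set
\[ m=(m_1,\ldots,m_\ell), \qquad m_i = t_{\chi(i)}^{\,g_i}, \]
where the $g_i$ are coset representatives transporting factor $1$ to factor $i$.

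Now take $x\in P\cap P^m$. Because the $t$'s lie in distinct double cosets, $x\pi$ must preserve $\chi$, so $x\pi=1$. Then $x$ normalises every $T_i$, and regularity on each coordinate gives that each component of $x$ is trivial, hence $x=1$.

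\smallskip
\noindent\textsc{Main obstacle.} The delicate points are making this double-coset bookkeeping precise when $P$ permutes the factors through a nontrivial twisting in $\Aut(T)\wr\Sym(\ell)$, and establishing the three-regular-orbit strengthening of the almost simple result uniformly, including the small groups $\PSL_2(q)$ and ${\rm Alt}(n)$ for small $n$. We expect the latter to be the main obstacle. For it we would first try \cref{lem:threetimess} with explicit involution-class data, and fall back on computation for finitely many $T$.
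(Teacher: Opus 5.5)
There is a genuine gap, and it lies in the statement you feed into the lifting step. You want, for every admissible $P$ in every almost simple group, at least three (or even two) double cosets $PtP$ with $t\in T$ and $P\cap P^t=1$. That is false. Take $G=\Sym(5)$ and $P\cong D_8$ a Sylow $2$-subgroup. $P$ is self-normalising, so on the $15$ points of $G/P$ it fixes one point, and the other $14$ points contain at most one orbit of size $8$. Since $G=TP$, every double coset meets $T$, so there is exactly one regular double coset. The same failure of \textup{(\ref{eqStar})} occurs in the other exceptional cases of \cref{lem:almostsimple}: Sylow $2$-subgroups of $\PSL_2(7)$ and of $\Sym(6)$, and a subgroup of order $64$ of $\Sym(8)$.

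Your colouring construction therefore cannot be carried out whenever some projection $Q_k$ is exceptional and the orbit $Y_k$ has more than one point. For example, in the monolithic group $\Sym(5)\wr\Sym(2)$ take $P=\{(a,a):a\in D_8\}\times\langle s\rangle\cong D_8\times C_2$, where $s$ swaps the two factors. This $P$ has class $2$ and exponent $4$, and $Q_1=D_8$, so there are not two good double cosets to colour with. Nor can you reuse a single $t$ on every factor: $m=(t,t)$ commutes with $s$, so $s\in P\cap P^m$.

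The missing idea is to use the trivial double coset as a second colour. This is exactly where the class-$2$ hypothesis on $P$ itself, rather than on $Q_k$, is needed; your lifting step never uses it. Take $m_k$ with coordinate $1$ on an asymmetric subset $X\subseteq Y_k$ and $t$ elsewhere, where $Q_k\cap Q_k^t=1$. The double-coset argument still forces a surviving $x$ to stabilise $X$, hence to lie in the base group, and it kills the coordinates of $x$ off $X$, so $\mathrm{supp}(x)\subseteq X$. Since $P$ is transitive on $Y_k$, a central element of $P$ has empty or full support. If $x\notin{\bf Z}(P)$, some commutator $[x,y]$ is a nontrivial central element with support of size at most $2|X|$. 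So one needs $2|X|<|Y_k|$, which is the second half of \cref{lem:partitions2} for $|Y_k|>8$; smaller orbits are checked by computer. In the example above, $m=(t,1)$ already works.

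Separately, your plan for the almost simple stage does not work as stated, because the union bound of \cref{lem:threetimess} fails for infinitely many $T$ when $P$ is a large non-Sylow subgroup. Take $T=\PSL_4(q)$ with $q=2^f\ge 4$ and $P$ the elementary abelian unipotent radical of order $q^4$. The two involution classes meeting $P$ (transvections and type $2^2$) give $\sum_C|C\cap P|^2/|C|>1$, about $1.55$ already for $q=4$. Also, \cref{tab:BH} contains infinite families, so it cannot just be checked by computer.

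The paper avoids both problems with \cref{lem:inflate}. If $P<Q$ and the Sylow subgroup $Q$ has a regular orbit on $G/Q$ (true outside \cref{tab:BH} by \cref{thBurHua}), then $P$ has at least $|Q:P|^2\ge 4$ regular orbits on $G/P$. The case $P=Q$ then reduces to groups with Sylow $2$-subgroups of class at most $2$ \cite{MR249504,MR379662}. The rows of \cref{tab:BH} are handled via \cite[Corollary~B]{BurHua2026} together with an index-and-transversal argument.
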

   
    Our goal for now is to obtain a strong version of \cref{prop:minnonabel} in the case where $G$ is almost simple,
    i.e. when $M=T$ is a nonabelian simple group.
    For an almost simple group $G$ with socle $T$ and a $2$-subgroup $P$ of $G$,
    we consider the following property:
    
    \begin{equation} \label{eqStar}
    \begin{gathered}
    \text{there exist } t_1,t_2,t_3 \in T \text{ such that } P \cap P^{t_i} =1 \text{ for each } i = 1,2,3,    \\
    \text{and the double cosets } Pt_iP \text{ are pairwise disjoint.} 
    \end{gathered}
    \tag{$\star$}
    \end{equation} 

\begin{remark}
The orbits of \(P\) on the coset space \(G/P\) are naturally parametrised by the double cosets \(PgP\).
A straightforward computation shows that
the stabiliser in \(P\) of a point in the orbit corresponding to \(PgP\) is conjugate to \(P \cap P^g\).
In particular, this orbit is regular if and only if \(P \cap P^{g}=1\).
Thus (\ref{eqStar}) is equivalent to the statement that \(P\) has at least three regular orbits
of type $PtP$ ($t \in T$) in its action on \(G/P\).
\end{remark}
    
    The proofs in this section differ from the rest of the paper,
    as they rely on deep properties of the finite simple groups.
While proving a statement like \cref{prop:minnonabel},
it is tempting to embed $P$ into a Sylow $2$-subgroup of $G$, say $Q$,
and to seek for $t \in T$ such that $Q \cap Q^t =1$.
Unfortunately, this approach fails badly in general, as there are examples where $Q \cap Q^g \neq 1$ for all $g \in G$.
In fact, all the exceptions have been recently described by Burness and Huang.

\begin{theorem}[\cite{BurHua2026}, Theorem~A] \label{thBurHua}
Let $G$ be an almost simple group and let $Q$ be a Sylow $2$-subgroup of $G$.
Then either there exists $g \in G$ such that $Q \cap Q^g =1$,
or $G$ is recorded in \cref{tab:BH}.
\end{theorem}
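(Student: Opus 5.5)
Since \cref{thBurHua} is quoted from \cite{BurHua2026}, we only outline how one would establish it. The plan is to recast the problem as a regular-orbit problem. By the remark following (\ref{eqStar}), an element $g$ with $Q \cap Q^g = 1$ exists precisely when $Q$ has a regular orbit on the coset space $G/Q$. Arguing as in \cref{lem:threetimess}, every nonregular point is fixed by some involution of $Q$. So it suffices to prove
\[ \sum_{x \in Q,\ {\bf o}(x)=2} \mathrm{fpr}_{G/Q}(x) \;=\; \sum_{x} \frac{|x^G \cap Q|}{|x^G|} \;<\; 1 . \]
Equivalently, grouping by classes, we need $\sum_{C} |C\cap Q|^2/|C| < 1$, where $C$ runs over the $G$-classes of involutions. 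This quantity bounds the probability that two random conjugates of $Q$ meet nontrivially. We then split according to the Classification of Finite Simple Groups, applied to the socle $T$ of $G$.

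\textbf{Lie type in characteristic $2$.} Here $Q \cap T$ is the unipotent radical $U$ of a Borel subgroup. Let $\dot w_0$ represent the longest Weyl group element; then the Bruhat decomposition gives $U \cap U^{\dot w_0} = 1$, which settles the inner-diagonal case directly. When $G$ contains field or graph automorphisms, $Q$ is larger than $U$. In that case we would twist $\dot w_0$ by a suitable element of the torus, or of $U$ itself, so that no outer involution of $Q$ fixes the coset. Fixed-point-ratio bounds for field and graph involutions control the leftover cases.

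\textbf{Alternating groups.} A Sylow $2$-subgroup of $\Sym(n)$ is a direct product of iterated wreath products $C_2 \wr \cdots \wr C_2$. We would produce $g$ combinatorially: choose a relabelling of the points that breaks every block structure of $Q$, in the spirit of the colouring argument in \cref{lem:partitions}, and check $Q \cap Q^g = 1$ directly. Small $n$, and $\Sym(6)$ with its exceptional automorphisms, would be checked by computer. \textbf{Sporadic groups} would be handled by direct computation, for instance with random search in Magma or GAP.

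\textbf{Lie type in odd characteristic.} This case carries the real weight of the argument. The Sylow $2$-subgroup $Q$ is small, of order roughly $|T|_2 \approx (q\pm1)_2^{\,\mathrm{rank}}\cdot|W|_2$, while involution classes are large. We would bound $|C \cap Q|$ via the structure of $Q$, namely wreath products of Sylow subgroups of $\mathrm{GL}_2(q)$ or $\mathrm{GL}_1(q^{2})$. We would bound $|C|$ from below with the standard centraliser orders of involutions, as in Liebeck--Saxl and Burness. Together these should force the sum above to be less than $1$ once the rank or $q$ is moderately large.

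I expect the main obstacle to be the residual small cases. These are low-rank groups with small $q$, such as $q\in\{3,5,7,9\}$, especially when $G$ contains diagonal, field and graph automorphisms, and $\PSL_2(q)$ with $q$ a Fermat or Mersenne prime. There the crude union bound fails, and indeed genuine exceptions occur, namely those listed in \cref{tab:BH}. For these I would first try exact double-coset computations in Magma to decide existence of $g$ and to assemble the table. Only if that proves infeasible would I fall back on a sharper inclusion--exclusion count of fixed points.
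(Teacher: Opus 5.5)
The paper does not prove this statement; it imports it verbatim from Burness--Huang, so the only question is whether your outline would establish it. The regular-orbit reformulation, the union bound, and the inner-diagonal case in characteristic $2$ via $U\cap U^{\dot w_0}=1$ are fine. The gap is the characteristic-$2$ case with $Q\neq U$, which is exactly where most of \cref{tab:BH} lives. Twisting by elements of $U$ is vacuous: since $U\leqslant Q$, we have $Q^{ug}=Q^g$ and $Q\cap Q^{gu}=(Q\cap Q^g)^u$. When $G=TQ$ you may take $g\in T$, and then $(Q\cap Q^g)\cap T=U\cap U^g$ is trivial only for $g\in U\dot w_0HU$. So everything hinges on whether some $h$ in the torus $H$ gives $Q\cap Q^{\dot w_0h}=1$. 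If $T$ is untwisted over $\mathbb{F}_2$, then $H=1$. Moreover, a graph involution $\tau$ normalising $U$ and the monomial subgroup $N\cong W$ fixes $\dot w_0$, because every diagram symmetry fixes $w_0$. For example, on $\mathrm{GL}_n(2)$ take $\tau\colon A\mapsto J(A^{-1})^{\top}J$ with $J$ the antidiagonal permutation matrix; then $\tau(J)=J$. Hence $\tau\in Q\cap Q^g$ for every $g\in T\langle\tau\rangle$. This is the source of rows~4--7, of row~1 (via $\mathrm{Sp}_4(2)$), and of $\PGL_2(7)\cong\PSL_3(2).2$ in row~2. These are infinite families of unbounded rank in characteristic~$2$, not residual low-rank odd-characteristic cases that a Magma computation could assemble. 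Your diagnosis of where the exceptions lie is therefore wrong.

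Your claim that fixed-point-ratio bounds control the leftover cases cannot work here. These groups genuinely have no regular orbit. Even where regular orbits exist, the global union bound is useless in characteristic $2$: already for $\PSL_2(16).4$ the sum is $15/17+4/17=19/17>1$, yet $Q$ has regular orbits. What is missing is an explicit analysis, inside the big Bruhat cell, of how the outer $2$-elements of $Q$ act on $H$. That analysis must decide, for every characteristic-$2$ group with even-order field or graph automorphisms, whether a suitable $h\in H$ exists. The untwisted $q=2$ graph cases fail, and $\PSL_3(4).X$ fails although $H\neq 1$. Neighbours with a small torus, such as $\mathrm{O}^-_{2n}(2)$, $\mathrm{PSU}_n(2).2$ and ${}^2E_6(2).2$, must be shown to succeed. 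Your outline supplies no argument for any of this.

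A smaller point concerns odd characteristic: in rank one, failure of the union bound is not confined to small $q$. For $\PGL_2(q)$ with $q\geqslant 5$ a Fermat prime, the sum is exactly $1$ for every such $q$. You can still conclude, since the trivial coset is fixed by all involutions of $Q$ and so the union is strictly smaller than the sum. But families indexed by Fermat or Mersenne primes cannot be settled by machine.
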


\begin{table}[ht]
\begin{center}
\begin{tabular}{||c||c|c||}
\hline\hline
&$G$ & conditions \\
 \hline\hline
1& $\PSL_2(9) . 2^2$  &  \\
2& $\PSL_2(q) . 2$  &  $q \geq 7$ is a Mersenne prime \\
 3& $\PSL_3(4) . X$  & $X \in \{C_2,C_2^2\}$ \\
 4& $\PSL_n(2) . 2$  & $n \geq 4$ \\
  5& $\mathrm{O}_{2n}^+(2)$  & $n \geq 4$  \\
  6& $F_4(2) .2$  &  \\
7& $E_6(2).2$  &  \\
\hline\hline
\end{tabular}
\medskip
\caption{The almost simple groups $G$ such that $Q \cap Q^g \neq 1$ for all $g \in G$.}
 \label{tab:BH}
\end{center}
\end{table}

More information about the groups in \cref{tab:BH} can be found in \cite[Corollary~B and Remark~4]{BurHua2026}.

\begin{lemma} \label{lem:table}
Let $G$ be one of the almost simple groups in \cref{tab:BH}.
Let $T$ be the socle of $G$, and let $P$ be a $2$-subgroup of $G$
having nilpotency class at most $2$ and exponent at most $4$.
Then $(G,P)$ satisfies \textup{(\ref{eqStar})}.
\end{lemma}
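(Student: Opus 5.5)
The plan is to treat the infinite families in \cref{tab:BH} (rows~2, 4 and~5) uniformly, using the fixed-point-ratio criterion of \cref{lem:threetimess} applied to the action of $P$ on $\Delta = T/(P\cap T)$. The finitely many sporadic cases (rows~1, 3, 6, 7, and the smallest members of the families) will be settled by direct computer verification. A few preliminary reductions come first. Replacing $P$ by a $G$-conjugate is harmless, since (\ref{eqStar}) is invariant under conjugation. We may also take $P$ maximal among $2$-subgroups of class at most $2$ and exponent at most $4$, since (\ref{eqStar}) passes to subgroups: if $P'\leqslant P$ and $PtP$ are pairwise distinct with $P\cap P^t=1$, then $P'tP'\subseteq PtP$. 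Moreover, regular $P$-orbits on $G/P$ of type $PtP$ with $t\in T$ correspond to regular orbits of $P$ acting on $T$-cosets, via the Frattini-type identification $T\to G/P$, $t\mapsto Pt$. Hence it suffices to show that $P$ has at least three regular orbits on the $T$-orbit $\{Pt : t\in T\}$, which has size $|T:P\cap T|$.

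By the remark after (\ref{eqStar}) and \cref{lem:threetimess}, it suffices to prove
\[ \sum_{h\in P,\ \mathbf o(h)=2} \mathrm{fpr}(h) \le 1 - 3\frac{|P|}{|T:T\cap P|}. \]
We bound the number of involutions in $P$ by $|P|$, and each fixed-point ratio by $\max_h |h^G\cap P|/|h^G|$, or by the known bounds of Liebeck--Saxl type \cite{LiebeckSaxl1991}, namely $\mathrm{fpr}(h)\lesssim |h^T|^{-1/2}$ for involutions in classical groups. The key input is that $|P|$ is small. A $2$-group of class $\le2$ and exponent $\le4$ inside $\PSL_n(2).2$ or $\mathrm O^+_{2n}(2)$ has order at most $2^{cn^2}$ with $c$ noticeably less than the exponent of a Sylow $2$-subgroup. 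More precisely, such $P$ has a normal abelian subgroup of bounded index relative to its rank, and the largest abelian $2$-subgroups have order about $2^{n^2/4}$. Meanwhile the smallest involution class has size roughly $2^{2n-2}$ and $|T|\approx 2^{n^2}$. I will obtain $|P|\le 2^{n^2/4+O(n)}$ by combining a bound on elementary abelian subgroups (Barry-type bounds) with the fact that $P/\mathbf Z(P)$ embeds into a product of such groups. The class-$2$ exponent-$4$ hypothesis is used exactly here; it excludes the full unitriangular group.

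Concretely, the steps are as follows. First, in row~2 ($\PSL_2(q).2$ with $q$ Mersenne), $|P|\le 16$ in any group of exponent $4$ inside $\PGL_2(q)$, so the count is immediate for $q$ beyond a small bound, and the remaining $q$ are checked by computer. Second, in rows~4 and~5, we split the involutions of $P$ into inner involutions, handled by the classical fpr bounds above, and outer involutions (graph automorphisms in $\PSL_n(2).2$), whose classes are even larger, so their fixed-point ratios are smaller still. Summing gives the estimate above for $n\ge n_0$ with $n_0$ around $6$ or $7$. Third, the cases $n<n_0$, together with rows~1, 3, 6 and~7, are verified computationally, by enumerating conjugacy classes of maximal subgroups of the relevant type (at least up to what is feasible) and exhibiting three double cosets. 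For $F_4(2).2$ and $E_6(2).2$, I would instead bound $|P|$ by the order of maximal elementary abelian-by-small subgroups and use the fpr bounds for exceptional groups.

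I expect the main obstacle to be the second step in the $\mathrm O^+_{2n}(2)$ and $\PSL_n(2).2$ families. There, the union bound must beat the factor $|P|$ counting involutions, and this requires a genuinely sharp bound $\log_2|P| \le (1/4+o(1))n^2$ for class-$2$ exponent-$4$ subgroups. Such a bound lies close to the true size of large abelian subgroups, for example unipotent radicals of maximal parabolics. If the crude union bound fails, I would first try the refinement that sums fixed-point ratios over the $P$-conjugacy classes of involutions weighted by $|h^G\cap P|$, using $|h^G\cap P|\le|P|$ only for the few classes with large centraliser.
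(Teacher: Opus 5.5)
Your treatment of row~2 is fine: there $|P|\le 8$, and \cref{lem:threetimess} applies once $q$ is moderately large. The core of the plan, however, fails. In rows~4 and~5 the union bound of \cref{lem:threetimess} cannot succeed, and the reason is not that your estimates are too crude. The exact value of $\sum_{\mathbf{o}(h)=2}\mathrm{fpr}(h)$ already exceeds $1$ for admissible $P$.

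In row~4, take $T=\SL_n(2)$, $k=\lfloor n/2\rfloor$, and let $P=U$ be the unipotent radical of the stabiliser of a $k$-subspace. It is elementary abelian of order $2^{k(n-k)}$, so it satisfies the hypotheses. The transvections form a single class $C$ of size $(2^n-1)(2^{n-1}-1)$, and $|C\cap U|=(2^k-1)(2^{n-k}-1)$. Each $h\in C\cap U$ has $\mathrm{fpr}(h)=|C\cap U|/|C|$ on $T/U$, so the transvections alone contribute
\[ \frac{(2^k-1)^2(2^{n-k}-1)^2}{(2^n-1)(2^{n-1}-1)}. \]
This equals $2401/1953>1$ for $n=6$ and tends to $2$. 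The other involution classes only add to it. Your reduction to a maximal admissible $P\ge U$ only increases $|C\cap P|$, so it does not help either.

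Row~5 behaves the same way. The radical of the stabiliser of a maximal totally singular subspace is elementary abelian of order $2^{n(n-1)/2}$. Its rank-$2$ elements, one long root involution per totally singular line, contribute
\[ \frac{(2^n-1)(2^{n-1}-1)}{3(2^{n-1}+1)(2^{n-2}+1)}, \]
which is already larger than $1$ for $n\ge 5$.

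The refinement you propose as a fallback computes the same quantity $\sum_C |C\cap P|^2/|C|$, so it cannot help. Conceptually, for elementary abelian $P\le T$, Burnside's lemma gives $\sum_{1\ne h\in P}\mathrm{fpr}(h)=|P|\,|P\backslash T/P|/|T:P|-1$. So the criterion can only hold if $P$ has fewer than twice as many orbits on $T/P$ as it would have if all were regular, and large abelian unipotent subgroups violate this.

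For rows~6 and~7, enumerating all admissible $P$ in $F_4(2).2$ or $E_6(2).2$ is not a realistic computation. The fixed-point-ratio fallback there meets the same obstruction, for instance from the elementary abelian radical $2^{16}$ of a $D_5$-parabolic of $E_6(2)$.

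The missing idea is to exploit the Sylow overgroup rather than any bound on $|P|$. The paper takes a Sylow $2$-subgroup $Q\ge P$ and applies Corollary~B of Burness--Huang with $A=P$ and $B=Q$. This gives $t\in T$ with $P\cap Q^t=1$, unless $P$ and $Q$ are both Sylow, or $P$ contains the explicit subgroup $K\leqslant Q$ from their Remark~4. The class and exponent hypotheses rule out both exceptions, by index or nilpotency-class comparisons with $K$. They also give $|Q:P|\ge 4$: for example, in rows~4--5 the group $Q$ has exponent at least $16$, whereas $|Q:P|\le 2$ would force exponent at most $8$.

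Now let $x$ run over a transversal of $P$ in $Q$. Then $P\cap P^{xt}\le P\cap Q^{xt}=P\cap Q^{t}=1$, and the double cosets $PxtP$ are pairwise disjoint. So (\ref{eqStar}) follows with $|Q:P|\ge 4$ choices. On this route the hypotheses on $P$ enter only through the index $|Q:P|$, which is why a large abelian $P$ such as $U$ above causes no difficulty.
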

\begin{proof}
The rows 1 and 3 in \cref{tab:BH} can be handled computationally.
In the following, we will always apply \cite[Corollary~B and Remark~4]{BurHua2026} to the almost simple group $TP$.
Let $Q$ be a Sylow $2$-subgroup of $G$ containing $P$.

\smallskip
\noindent
\textsc{Row 2.} 
In this case $Q$ is dihedral of order $2(q+1)$,
and the largest subgroup of $Q$ having nilpotency class at most $2$ and exponent at most $4$ has order $8$.
If $q=7$ then we can proceed computationally, otherwise
\[
|Q:P|\ge \frac{q+1}{4} >3 .
\]

We now apply \cite[Corollary~B]{BurHua2026} with $A = P$ and $B = Q$.
Since $A$ and $B$ are not both Sylow $2$-subgroups of $G$, we deduce that $P \cap Q^t = 1$ for some $t \in T$. 
Let $\Delta$ be a transversal for $P$ in $Q$.
For every $x \in \Delta$, we have
\[
P \cap Q^{xt} = P \cap Q^t = 1 .
\]
Let $x,y \in \Delta$ and suppose that $PxtP = PytP$.
Then $xt = a y t b$ for some $a,b \in P$, and rearranging we obtain
\[
b^{-1} = t^{-1} x^{-1} a y t \in P \cap Q^t = 1 .
\]
It follows that $x = ay$, and since $x$ and $y$ lie in the transversal $\Delta$, this implies $x = y$.
Thus the double cosets $PxtP$, as $x$ ranges over $\Delta$, are pairwise disjoint.
Since $|\Delta| > 3$, the result follows in this case.

\smallskip
\noindent
\textsc{Row 4.} 
For $n \leq 6$ the statement has been verified computationally. Hence, we assume $n\ge 8$.
Since $\SL_8(2).2$ is a subgroup of $\SL_n(2).2$, a direct computation for $\SL_8(2).2$ shows that, in any case, $Q$ has exponent at least $16$.
Since $P$ has exponent at most $4$, it follows that $|Q:P|\ge 4$; otherwise $Q$ would have exponent at most $8$, a contradiction.

Applying \cite[Corollary~B]{BurHua2026} with $A=P$ and $B=Q$, we obtain either $P\cap Q^t=1$ for some $t\in T$, or $K\leqslant P\leqslant Q$, where $K$ is defined in \cite[Remark~4]{BurHua2026}. From \cite[Remark~4]{BurHua2026} we have
\[|Q:K|=2<|Q:P| .\]
Therefore, the second possibility cannot occur.
Thus $P\cap Q^t=1$ for some $t\in T$, and as above the transversal argument yields the desired elements.

\smallskip
\noindent
\textsc{Row 5.}
When $n=4$ the statement has been verified computationally, so assume $n\ge 5$.
As $\mathrm{O}_{10}^+(2)$ is a subgroup of $\mathrm{O}_{2n}^+(2)$, $Q$ has exponent at least $16$, and hence $|Q:P|\ge 4$ by the same argument as in the previous case.

Applying \cite[Corollary~B]{BurHua2026} again, either $P\cap Q^t=1$ for some $t\in T$, or $K\leqslant P\leqslant Q$.
Here $K$ is the unipotent radical of the stabiliser in $G=\mathrm{O}_{2n}^+(2)$ of a totally singular subspace of dimension $n-1$, and the Levi factor of this stabiliser has a quotient isomorphic to $\SL_{n-1}(2)$. 
Since a Sylow $2$-subgroup of $\SL_4(2)$ has nilpotency class $3$ and $n-1\ge 4$,
the inclusion $K \leqslant P$ is impossible.
Hence $P\cap Q^t=1$ for some $t\in T$, and the conclusion follows as before.

\smallskip
\noindent
\textsc{Row 6.}
An explicit computation shows that every subgroup of $Q$ of index at most $8$ has nilpotency class at least $3$
(actually the least nilpotency class of a subgroup of $Q$ of index at most $8$ is $4$). 
Therefore $|Q:P|\ge 16$. (This bound will also be used in the next case.)

Applying \cite[Corollary~B]{BurHua2026}, either $P\cap Q^t=1$ for some $t\in T$, or $K\leqslant P\leqslant Q$.
From \cite[Remark~4]{BurHua2026} we have $|Q:K|=2$, so the second possibility cannot occur.
Thus $P\cap Q^t=1$, and since $|Q:P|\ge 4$, the argument with a transversal yields the required elements.

\smallskip
\noindent
\textsc{Row 7.}
Since $F_4(2).2$ is a subgroup of $E_6(2).2$ (see \cite{Atlas1985}), we have $|Q:P|\ge 16$.
Applying \cite[Corollary~B]{BurHua2026} as above,
either $P\cap Q^t=1$ for some $t\in T$, or $K\leqslant P\leqslant Q$.
From \cite[Remark~4]{BurHua2026} we have $|Q:K|=8$, so the second possibility is excluded.
Hence $P\cap Q^t=1$, and we conclude as in the previous cases.
\end{proof}

We are ready to prove our strong result for almost simple groups.

\begin{lemma}[Almost simple case] \label{lem:almostsimple} 
Let $G$ be an almost simple group with socle $T$,
and let $P$ be a $2$-subgroup of $G$ having nilpotency class at most $2$ and exponent at most $4$.
Then either $(G,P)$ satisfies \textup{(\ref{eqStar})}, or one of the following holds:
\begin{enumerate}[label=(\roman*)]
\item\label{uno} $G = \Sym(5)$ and $P$ is a Sylow $2$-subgroup of $G$;
\item\label{due} $\PSL_2(7)\leqslant G \leqslant \PGL_2(7)$ and $P$ is a Sylow $2$-subgroup of $\PSL_2(7)$;
\item\label{tre} $\Sym(6)\leqslant G \leqslant \Aut(\Alt(6))$ and $P$ is a Sylow $2$-subgroup of $\Sym(6)$;
\item\label{quattro} $G=\Sym(8)$ and, up to conjugation,
$$P=\langle(1\,7)(3\, 5),
(1\, 2\, 7\, 6)(3\, 8)(4\, 5),
(1\, 2)(3\, 5)(4\, 8)(6\, 7),
(1\, 2\, 7\, 6)(3\, 4\, 5\, 8) \rangle$$
has order $64$.
\end{enumerate}
In all cases, \cref{prop:minnonabel} holds for $(G,P)$.
\end{lemma}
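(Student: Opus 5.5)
The plan is to reduce to the Burness--Huang dichotomy and then upgrade a single trivial intersection to three disjoint regular double cosets, exactly as in the proof of \cref{lem:table}. Let $Q$ be a Sylow $2$-subgroup of $G$ containing $P$.

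\textbf{Groups not in \cref{tab:BH}.} If $G$ is not in \cref{tab:BH}, then \cref{thBurHua} gives $g\in G$ with $Q\cap Q^g=1$.
\begin{itemize}
\item Since $G=TQ$ modulo $T$-conjugacy, we may write $g=qt$ with $q\in Q$, $t\in T$. Hence $Q\cap Q^t=1$ with $t\in T$, and in particular $P\cap Q^t=1$.
\item Let $\Delta$ be a transversal of $P$ in $Q$. The transversal computation from \cref{lem:table} shows that the double cosets $PxtP$, $x\in\Delta$, are pairwise disjoint, and each satisfies $P\cap P^{xt}\le P\cap Q^{t}=1$.
\item Since $xt\in T$ only when $x\in T$, I would take $\Delta$ inside $Q\cap T$. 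This works because $|Q\cap T:P\cap T|$ also counts disjoint double cosets. Alternatively, one can apply \cite[Corollary~B]{BurHua2026} to $TP$ directly, as in \cref{lem:table}.
\end{itemize}
So (\ref{eqStar}) holds as soon as $|Q\cap T:P\cap T|\ge 3$, or more crudely $|Q:P|\ge 3$ when working inside $TP$.

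\textbf{Small index.} If the index is at most $2$, then $P$ is nearly Sylow, so $Q$ itself has a subgroup of index $\le 2$ of class $\le 2$ and exponent $\le 4$.
\begin{itemize}
\item For large Lie rank, Sylow $2$-subgroups have large class or exponent. This follows from the same embedding arguments used in Rows 4--7 of \cref{lem:table}, for example via subgroups $\SL_4(2)$ or $\Alt(8)$.
\item So only groups of small $2$-rank survive: $\PSL_2(q)$-type groups, $\Alt(n)$ and $\Sym(n)$ with $n\le 9$, and a few small-rank classical and sporadic groups.
\item In this range I would use a fixed-point-ratio count on $T$-cosets in the spirit of \cref{lem:threetimess}. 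It shows $P$ has at least three regular orbits on $TP/P$ whenever $\sum_{h\in P,\,\mathbf o(h)=2}\mathrm{fpr}(h)\le 1-3|P|/|T:T\cap P|$.
\item The bound $\mathrm{fpr}(h)=|h^G\cap P|/|h^G|\le |P|/|h^G|$ handles everything except groups with very small conjugacy classes of involutions.
\end{itemize}

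\textbf{Residual groups.} The remaining groups would be checked by computer, together with the rows of \cref{tab:BH} already settled in \cref{lem:table}. This identifies exactly the exceptions \ref{uno}--\ref{quattro}.

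For those exceptions, I still need the weaker \cref{prop:minnonabel}, namely a single $m\in T$ with $P\cap P^m=1$. This is a finite computation in $\Sym(5)$, $\PGL_2(7)$, $\Aut(\Alt(6))$ and $\Sym(8)$. For example, in $\Sym(5)$ with $P=D_8$ one finds a conjugate $D_8^m$ by an element of $\Alt(5)$ meeting it trivially.

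\textbf{Main obstacle.} The hardest step is the small-index case: proving that only finitely many, explicitly boundable, almost simple groups admit a subgroup of class $\le 2$ and exponent $\le 4$ of index $\le 2$ in a Sylow $2$-subgroup. I would attempt this first by exponent alone. Sylow $2$-subgroups containing an element of order $16$ are excluded, because index $\le 2$ would force exponent $\le 8$. That leaves groups whose Sylow $2$-subgroups have exponent $\le 8$, which are classifiable directly from the orders of unipotent elements.
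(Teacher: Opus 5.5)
You handle the first step correctly, once $G$ is replaced by $TP$ (which does not affect (\ref{eqStar})). The treatment of what is left, however, has a genuine gap.

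\textbf{Index $\le 2$ is the wrong residual.} You extract only $|Q:P|$ regular double cosets $PxtP$. If you also vary the right-hand factor, taking $PxtyP$ with $x,y$ in a transversal of $P$ in $Q$, you get $|Q:P|^2$ of them; this is \cref{lem:inflate}. That is not cosmetic. It disposes of every $P<Q$ at once and leaves only the case $P=Q$ Sylow, which is exactly what lets the paper quote Walter's classification (abelian Sylow $2$-subgroups) and Gilman--Gorenstein's (class $2$). For your residual ``index $\le 2$'' there is no classification to quote. Your substitute, exponent $\le 8$ read off from orders of unipotent elements, does not work: in odd characteristic Sylow $2$-subgroups are not unipotent. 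Your claim that only finitely many groups survive, all of small $2$-rank, is also false. The groups $\SL_2(2^f)$, $\PSL_2(q)$ with $q\equiv 3,5\pmod 8$ or $q\equiv \pm 7\pmod{16}$, ${}^2G_2(3^f)$, ${}^2B_2(2^f)$, $\PSL_3(2^f)$, $\mathrm{PSU}_3(2^f)$ and $\mathrm{PSp}_4(2^f)$ all have Sylow $2$-subgroups of class $\le 2$ and exponent $\le 4$, with unbounded $2$-rank.

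\textbf{The fixed-point-ratio bound fails on infinite families.} These families cannot be sent to a computer. The union bound of \cref{lem:threetimess} provably fails on several of them when $P$ is Sylow, because the fixed-point sets of the involutions of $P$ on $T/P$ overlap heavily.
For $T=\SL_2(q)$ with $q=2^f$, each of the $q-1$ involutions of $P$ has fixed-point ratio $1/(q+1)$. The sum is therefore $(q-1)/(q+1)$, which exceeds $1-3q/(q^2-1)$ for every $q$, even though $P$ has exactly $q-1$ regular orbits.
For $T=\PSL_3(q)$ the sum equals $(2q+1)^2(q-1)/\bigl((q^2+q+1)(q+1)\bigr)$, which tends to $4$.
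For $T=\mathrm{PSp}_4(q)$ the two root-element classes alone contribute $2(q^2-1)/(q^2+1)>1$.
Your cruder bound $\mathrm{fpr}(h)\le |P|/|h^G|$ is worse still.

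\textbf{What the paper does instead.} The missing idea is structural.
\begin{itemize}
\item For $\PSL_3(2^f)$, $\mathrm{PSU}_3(2^f)$ and $\mathrm{PSp}_4(2^f)$, the paper takes the opposite unipotent subgroup $P^-$, for which $P\cap P^-=1$. Translating by ${\bf N}_T(P)$ then gives $|{\bf N}_T(P):P|\ge 3$ distinct regular double cosets.
\item For $\SL_2(2^f)$, possibly extended by field automorphisms, it counts vectors with trivial stabiliser directly.
\item For ${}^2B_2(2^f)$ it uses that $P$ is a TI subgroup.
\item Only $\PSL_2(q)$ with $q$ odd and ${}^2G_2(3^f)$ are handled by \cref{lem:threetimess}. $J_1$, $\Alt(7)$ and small $q$ are done by computer.
\end{itemize}
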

\begin{proof}
We immediately observe that, in the cases \ref{uno}-\ref{due}-\ref{tre}-\ref{quattro},
it can be checked with a computer the existence of $t \in T$ such that $P \cap P^t =1$.
Also, by \cref{lem:table} we can assume that $G$ does not appear in \cref{tab:BH},
so by \cref{thBurHua} we can assume that a Sylow $2$-subgroup $Q$ of $G$ has a regular orbit on $G/Q$.
When $T\in \{\Alt(5),\Alt(6),\Alt(8),\PSL_2(7)\}$, the proof follows with a computer calculation.
We will prove that (\ref{eqStar}) holds in all the remaining cases.

Let $\Delta = G/P$ and let $Q$ be a Sylow $2$-subgroup of $G$ containing $P$.
If $Q \neq P$, then \cref{lem:inflate} implies that $P$ has at least $|Q:P|^2\ge 4$ regular orbits on $\Delta$.
Therefore, for the rest of the proof we can assume $P=Q$.
In particular, $G$, and so $T$, has a Sylow $2$-subgroup of nilpotency class at most $2$.

\smallskip
\noindent\textsc{$T$ has an abelian Sylow $2$-subgroup.}
By a classical result of Walter \cite{MR249504}, the group $T$ is isomorphic to one of the following:
\begin{enumerate}[$(A)$]
\item\label{A.1} $\SL_2(2^f)$, with $f \ge 2$;
\item\label{B.1} $\PSL_2(q)$, with $q \equiv 3,5 \pmod{8}$;
\item\label{C.1} ${}^2G_2(3^f)$, with $f$ odd and $f \ge 3$;
\item\label{D.1} $J_1$.
\end{enumerate}
We treat each case separately.

\noindent\emph{Case \ref{A.1}.}
Let $V$ be the $2$-dimensional vector space over the finite field $\mathbb{F}_{2^f}$,
and let $\Sigma = V \setminus \{0\}$. Consider the natural action of $G$ on $\Sigma$.
Since $|P|$ is a power of $2$ and $|\Sigma|$ is odd, the group $P$ fixes some element $\sigma \in \Sigma$.
Hence $\Sigma$ is a system of imprimitivity for the action of $G$ on $\Delta$.

We aim to determine a lower bound for the number of regular orbits of $G_\sigma$ on $\Sigma$.
Let $e_1,e_2$ be a basis of $V$, let $q=2^f$,
and let $\phi \colon \mathbb{F}_q \to \mathbb{F}_q$ be the field automorphism defined by $x^\phi = x^{2^{f_o}}$.
We write $f=f_2 f_o$, where $f_2$ is a power of $2$ and $f_o$ is odd.
Note that, because $G=TP$, $\phi$ has order $f_2$ and $G \leqslant \SL_2(2^f)\rtimes\langle\phi\rangle$.
The stabiliser of $e_1$ in $\SL_2(2^f)\rtimes\langle\phi\rangle$ is
\[
X=\left\{
\begin{pmatrix}
1 & 0\\
x & 1
\end{pmatrix}
\;\middle|\;
x \in \mathbb{F}_q
\right\} \rtimes \langle \phi \rangle.
\]
It follows that $P\leqslant X$.
We define
\[\phi_0=
\begin{cases}
    \phi^{f_2/2} & \text{if } f_2 \ge 2, \\
    1 & \text{if } f_2 = 1,
\end{cases}\]
and
\[
V_0 = \{ \mu_1 e_1 + \mu_2 e_2 \mid \mu_1 \in \mathbb{F}_q,\; \mu_2 \in \mathbb{F}_{q}\setminus \Fix_{\mathbb{F}_q}(\phi_0) \}.
\]
A direct verification shows that the identity element of $X$ is the only element that fixes any vector of $V_0$.
Hence, every element of $V_0$ lies in a regular $X$-orbit.
Therefore, $X$ has at least
\[
\frac{|V_0|}{|Q|}
\ge \frac{q(q-\sqrt{q})}{q f_2}
= \frac{q-\sqrt{q}}{f_2}
\]
regular orbits on $V$.
A straightforward computation shows that $(q-\sqrt{q})/f_2 \ge 3$ for all $f \ge 3$,
thus completing the analysis of this scenario.

When $f=2$, $T \cong \Alt(5)$, which we already dealt with computationally:
the exception to (\ref{eqStar}) arising is precisely the one described in~\ref{uno}.

\noindent\emph{Case \ref{B.1}.}
Write $q=p^f$, where $p$ is prime. If $f$ were even, then $p^f \equiv 1,7 \pmod{8}$, a contradiction.
Hence $f$ is odd. Since $G=TP$, either $G=\PSL_2(q)$ or $G=\PGL_2(q)$.

A Sylow $2$-subgroup of $H$ is either elementary abelian of order $4$ or dihedral of order $8$,
and hence $P$ contains at most $5$ involutions.
Assume $q>11$.
Since the minimal degree of a faithful permutation representation of $G$ is $q+1$, from (\ref{eq:fpr}), we obtain
\[
\sum_{\substack{g \in P \\ {\bf o}(g)=2}} \mathrm{fpr}_\Delta(g)\le \sum_{\substack{g \in P \\ {\bf o}(g)=2}} \frac{5}{|G:{\bf C}_H(h)|}\le
  \sum_{\substack{g \in P \\ {\bf o}(g)=2}} \frac{5}{q+1}
\le \frac{5^2}{q+1}.
\]
By direct computation, $5^2/(q+1) \le 1 - 3|P|/|\Delta|$ holds for all $q \notin \{13,19\}$.
Hence, when $q>19$, we can apply \cref{lem:threetimess}.

For $q \le 19$, the statement has been verified computationally, and the only exception is given in \ref{due}.

\noindent\emph{Case \ref{C.1}.}
From \cite[Table~5]{Atlas1985}, the group $\mathrm{Out}(T)$ has odd order, and hence $G=TP=T$.
Thus, $P$ is a Sylow $2$-subgroup of $T$.

By \cite{MR138680}, the group $P$ is elementary abelian of order $8$,
and $T$ has a unique conjugacy class of involutions $g$ with ${\bf C}_T(g) \cong 2 \times \PSL_2(3^f)$.
For $q=3^f$, since $|T|=(q^3+1)q^3(q-1)$ and $|{\bf C}_T(g)|=q(q-1)$, we obtain
\[
\sum_{\substack{g \in P \\ {\bf o}(g)=2}} \mathrm{fpr}_\Delta(g)
= \frac{7^2}{(q^2-q+1)q^2} \le \frac{7}{2 \cdot 3^3}.
\]
Therefore, we can apply \cref{lem:threetimess}.

\noindent\emph{Case \ref{D.1}.}
From \cite[p.~36]{Atlas1985}, $G=J_1$.
Hence, $P$ is a Sylow $2$-subgroup of $T$.
A direct computation shows that $P$ has $2\,835$ regular orbits on $\Delta$, and the conclusion follows.

\smallskip
\noindent\textsc{$T$ has a Sylow $2$-subgroup of class $2$.}
By a result of Gilman and Gorenstein \cite{MR379662}, the group $T$ is isomorphic to one of the following:
\begin{enumerate}[$(A)$]
    \item\label{A.2} $\Alt(7)$;
    \item\label{B.2} $\PSL_2(q)$, with $q \equiv \pm 7 \pmod{16}$;
    \item\label{C.2} ${}^2B_2(2^f)$, with $f$ odd and $f \ge 3$;
    \item\label{D.2} $\PSL_3(2^f)$, $\mathrm{PSU}_3(2^f)$, $\mathrm{PSp}_4(2^f)$, with $f \ge 2$.
\end{enumerate}

\noindent\emph{Case \ref{A.2}.}
Here, the statement has been verified computationally.

\noindent\emph{Case \ref{B.2}.}
We argue as in the case $q\equiv 3,5\pmod 8$. If $f\equiv 0\pmod 4$, then $p^f \equiv 1 \pmod{16}$, a contradiction.
Hence $f\equiv 1,2,3\pmod 4$, and so a Sylow $2$-subgroup of $G$ has order at most $32$,
and hence $P$ contains at most $31$ involutions.
Assume $q>11$.
Since the minimal degree of a faithful permutation representation of $H$ is $q+1$, from (\ref{eq:fpr}), we obtain
\[
\sum_{\substack{g \in P \\ {\bf o}(g)=2}} \mathrm{fpr}_\Delta(g)
\le \frac{31^2}{q+1}.
\]
For all $q >960$, $31^2/(q+1) \le 1 - 3|P|/|\Delta|$ holds, and thus we can apply \cref{lem:threetimess}.
For $q \le 960$, the statement has been verified computationally,
and the only exceptions are again the cases in~\ref{due} and~\ref{tre}.

\noindent\emph{Case \ref{C.2}.}
We follow \cite{MR136646} for basic properties of $T$.
For $q=2^f$, we have $|T|=(q^2+1)q^2(q-1)$.
Since $\mathrm{Out}(T)$ has odd order, $G=T$ and $P$ is a Sylow $2$-subgroup of $H$. 

Observe that $P$ is trivially intersecting in $T$, that is, $P \cap P^h = 1$ for every $g \in G \setminus {\bf N}_G(P)$.
Recall also that $|{\bf N}_G(P):P|=q-1$.
It follows that
\[
\bigcup_{g \in P,\, g \ne 1} \Fix_\Delta(g)
= \Fix_\Delta(P).
\]
Hence, this set has cardinality $q-1$.
Using the same reasoning as in \cref{lem:threetimess}, we deduce that $P$ has
\[
\frac{|\Delta|-(q-1)}{q^2}
= \frac{(q^2+1)(q-1)-q}{q^2}
= q-1
\]
regular orbits on $\Delta$. Since $q=2^f \ge 8$, the result follows immediately.

\noindent\emph{Case \ref{D.2}.}
Recall that in these cases we are analysing $P=Q$ is a Sylow $2$-subgroup of $G=PT$.
Using the fact that $P$ has nilpotency class $2$, in fact $G=T$.
Now, let $P^-$ be the opposite unipotent subgroup of $T$.
Then clearly $P\cap P^-=1$. Observe that ${\bf N}_T(P)$ is a Borel subgroup of $T$ and 
\[|{\bf N}_T(P):P|
=
\begin{cases}
(q-1)^2/\gcd(3,q-1)&\textrm{when }T=\PSL_3(q),\\
(q^2-1)/\gcd(3,q+1)&\textrm{when }T=\mathrm{PSU}_3(q),\\
(q-1)^2&\textrm{when }T=\mathrm{PSp}_4(q).
\end{cases}
\]
Since $P$ has at least one regular orbit on $\Delta$, $P$ has at least $|{\bf N}_T(P):P|$ regular orbits on $\Delta$.
Using the fact that $f\ge 2$, we have $|{\bf N}_G(P):P|\ge 3$ and hence $P$ has at least three regular orbits on $\Delta$.

This completes the analysis in the case where $P$ is a Sylow $2$-subgroup of $G$,
and it also completes the proof of the lemma.
\end{proof}

\begin{remark} \label{rem:prob2}
This section is the second major step in the proof of \cref{thrm:main} that we are not able to adapt
to obtain an analogue for connected $4$-valent vertex- and edge-transitive graphs.
The difficulty lies in the fact that, although a $4$-valent analogue of \cref{lem:Galpha} can be obtained,
the nilpotency class of a vertex-stabiliser is bounded only by $5$.
This makes the arguments substantially more complicated.
\end{remark}

\section{Monolithic groups} \label{sec:proof} 

We are ready to prove our main result for monolithic groups,
which is the last ingredient in the proof of \cref{thrm:main}.
One of the key ideas is that the three distinct cosets in (\ref{eqStar}) are used to apply \cref{lem:partitions}.

\begin{proof}[Proof of \cref{prop:minnonabel}]
Let $G$ be a monolithic group with nonabelian socle $M=T^\ell$,
    and let $P$ be a $2$-subgroup of $G$ having nilpotency class at most $2$ and exponent at most $4$.
    We have to show the existence of $m \in M$ such that $P \cap P^m = 1$.  

    Since ${\bf C}_G(M) = 1$, we may view $G$ as a subgroup of
    \[ \Aut(M)=\Aut(T)\, \wr_\ell \,\Sym(\ell) . \]
    We write each element of $G$ in the form $(\varphi_1,\ldots,\varphi_\ell)\sigma$,
    where $\varphi_1,\ldots,\varphi_\ell \in \Aut(T)$ and $\sigma \in \Sym(\ell)$.
    For each $i$,
    let $\pi_i \colon {\bf N}_G(T_i) \to \Aut(T)$ be the projection on the $i$-th direct factor of the base group of $\Aut(M)$.
    
    Consider the action that $P$ induces by conjugation on the direct factors $\{T_1,\dots, T_\ell\}$,
    and denote by $Y_1$, $\dots$, $Y_r$ the orbits of this action.
    The subgroup of $M$ generated by the factors of an orbit $Y_k$ will be denoted by $M_k$,
    and let $\rho_k \colon P \to \Aut(M_k)$ be the restriction map.
    For each $k$, from \cite[Embedding theorem]{MR3791829},
    we may suppose that, for every $T_i$ and $T_j$ in the same orbit $Y_k$,
    \[ \pi_{i}({\bf N}_{P}(T_{i})) = \pi_{j}({\bf N}_P(T_{j})) .\]
    Thus, we can denote by $Q_k$ this common image, and we write $H_k=Q_kT$.
    Note that $H_k$ is almost simple and $Q_k$ is a $2$-subgroup having nilpotency class at most $2$ and exponent at most $4$.
    Moreover, $\rho_k(P) \leqslant Q_k \wr_{Y_k} \Sym(Y_k)$.
   
    We will prove that, for every $k\in\{1,\ldots,r\}$, there exists $m_k \in M_k$ such that
    \begin{equation} \label{eq:final}
    \rho_k(P) \cap \rho_k(P)^{m_k} =1 .
    \end{equation}
    Let $m=m_1 \cdots m_r \in M$, so that for each $k$ we have $\rho_k(P^m) = \rho_k(P)^{m_k}$.
    So if $x \in P \cap P^m$, then $\rho_k(x) \in \rho_k(P) \cap \rho_k(P)^{m_k} =1$,
    and since $k$ is arbitrary we conclude that $x=1$ as desired.

     For each fixed $k\in\{1,\ldots,r\}$, we now divide the proof of (\ref{eq:final}) in two cases.
    
    \medskip \noindent {\sc $(H_k,Q_k)$ satisfies (\ref{eqStar}).}
    There exist $t_1,t_2,t_3 \in T$ such that $Q_k \cap Q_k^{t_i} =1$ for each $i$
    and the double cosets $Q_k t_i Q_k$ are pairwise distinct.
    By \cref{lem:partitions} applied to the group $P$ acting on $Y_k$,
    there exists a partition of the domain, say $Y_k = X_1 \cup X_2 \cup X_3$,
    such that the only elements of $P$ fixing setwise $X_1$, $X_2$, and $X_3$
    are the elements in the kernel of the action of $P$ on $Y_k$.

   Define $m_k \in M_k$ by
   $$ m_k = (u_i)_{i \in Y_k} , \qquad u_i = t_j \text{ if } T_i \in X_j . $$
   Let $x \in \rho_k(P) \cap \rho_k(P)^{m_k}$, and write $x = (q_i)_{i \in Y_k} \sigma$,
   with $q_i \in Q_k$ and $\sigma \in \Sym(Y_k)$.
The \(i\)-th coordinate of \(x^{m_k^{-1}} \in \rho_k(P)\) is $u_i q_i u_{i^\sigma}^{-1} \in Q_k$, so
\[
Q_ku_iQ_k=Q_ku_{i^\sigma}Q_k.
\]
This implies that \(\sigma\) fixes each \(X_j\) setwise, i.e. $x=(q_i)_{i\in Y_k}$ belongs to the base group.
Now let \(T_i\in X_j\). The \(i\)-th coordinate of \(x^{m_k^{-1}} \in \rho_k(P)\) is $t_j q_i t_j^{-1} \in Q_k$, so
\[
q_i\in Q_k\cap Q_k^{t_j} =1 .
\]
Repetead for each $i$, this argument gives $x=1$. Because $x$ is arbitrary, (\ref{eq:final}) follows.
  
\medskip \noindent {\sc $(H_k,Q_k)$ does not satisfy (\ref{eqStar}).}
By \cref{lem:almostsimple}, $(H_k,Q_k)$ is one of the pairs listed in \ref{uno}, \ref{due}, \ref{tre}, or \ref{quattro}.
Moreover, there exists $t \in T$ such that $Q_k \cap Q_k^t =1$.
Moreover, if $|Y_k| \le 8$, then we can consider $\Aut(T) \wr_{Y_k} \Sym(Y_k)$ in each case and
verify computationally that (\ref{eq:final}) holds for some $m_k \in M_k$.

If $|Y_k| >8$, then by \cref{lem:partitions2} there exists $X\subseteq Y_k$ with $2|X|<|Y_k|$
such that the only elements of $P$ fixing setwise $X$ are the elements in the kernel of this action.
For every \(T_i\in Y_k\), define
\[
u_i=
\begin{cases}
1, & \text{if } T_i\in X,\\[2mm]
t, & \text{if } T_i\in Y_k\setminus X,
\end{cases}
\]
and set
$$ m_k=(u_i)_{i\in Y_k}\in M_k. $$

Let
\(
x\in \rho_k(P)\cap \rho_k(P)^{m_k} 
\),
and write $x=(q_i)_{i\in Y_k}\sigma$,
where \(q_i\in Q_k\) for every \(i\in Y_k\), and \(\sigma\) is induced by the action of \(P\) on \(Y_k\).
As before, the \(i\)-th coordinate of \(x^{m_k^{-1}} \in \rho_k(P)\) is $u_i q_i u_{i^\sigma}^{-1}\in Q_k$, so
\[
Q_k u_i Q_k = Q_k u_{i^\sigma}Q_k.
\]
Thus \(\sigma\) fixes \(X\) setwise, and $x=(q_i)_{i\in Y_k}$ belongs to the base group.
Moreover, note that $\mathrm{supp}(x) \subseteq X$.

If $x \in {\bf Z}(P)$, since $P$ acts transitively on $Y_k$,
then all coordinates of $x$ corresponding to a direct factor in $Y_k$ are $1$, or none of them are.
This implies $x=1$.
On the other hand, if $x \notin{\bf Z}(P)$, then there exists $y\in P$ with $z=[x,y]\ne 1$.
As $P$ has class at most $2$, we have $z\in {\bf Z}(P)$.
As before, all coordinates of $z$ corresponding to a direct factor in $Y_k$ are $1$, or none of them are.
But $|\mathrm{supp}(z)| \leq 2 |\mathrm{supp}(x)| < |Y_k|$,
so $z=1$, which gives a contradiction.
\end{proof}

We can finally conclude the paper.

\begin{proof}[Proof of \cref{thrm:main}]
Let $\Gamma$ be a connected cubic graph and let $G \leqslant \Aut(\Gamma)$ act transitively on $V\Gamma$.
We work by induction on $|V\Gamma|$.
Let $\alpha \in \V\Gamma$, and
    apply Propositions~\ref{prop:arctran}, \ref{prop:minabel} and \ref{prop:minnonabelnontriv}.
   Finally, apply \cref{prop:minnonabel} with $P=G_\alpha$, noting that $(G_\alpha)^g = G_{\alpha^g}$ for any $g \in G$,
   so $G$ has base size $2$ in this case.
\end{proof}

\bibliographystyle{plain}
\bibliography{baseSize.bib}

\end{document}